\tikzstyle{vertex}=[circle, draw, inner sep=0pt, minimum size=4.5pt]
\title{On Locally Identifying Coloring of Cartesian Product and Tensor Product of Graphs}
\author{Sriram Bhyravarapu\inst{1}, 
Swati Kumari\inst{2}
  and  I. Vinod Reddy\inst{2}}
\institute{The Institute of Mathematical Sciences, HBNI, Chennai, India\\
\email{sriramb@imsc.res.in}\\ 
\and
Department of Computer Science and Engineering,  IIT Bhilai, India\\
\email{swatik@iitbhilai.ac.in, vinod@iitbhilai.ac.in}}
\begin{document}
	
	\pagestyle{plain}
	
	\maketitle
 \begin{abstract}
 For a positive integer $k$, a proper $k$-coloring of a graph $G$ is a mapping $f: V(G) \rightarrow \{1,2, \ldots, k\}$ such that $f(u) \neq f(v)$ for each edge $uv$ of $G$. The smallest  integer $k$ for which there is a proper $k$-coloring of $G$ is called the chromatic number of $G$, denoted by $\chi(G)$.
 A \emph{locally identifying coloring} (for short, lid-coloring) of a graph $G$
 is a proper $k$-coloring of $G$ such that every pair of adjacent vertices with distinct closed neighborhoods has  distinct set of colors in their closed neighborhoods. 
 The smallest integer $k$ such that $G$ has a lid-coloring with $k$ colors is called
 \emph{locally identifying chromatic number}
 (for short, \emph{lid-chromatic number}) of $G$, denoted by $\chi_{lid}(G)$.

This paper studies the lid-coloring of the Cartesian product and tensor product of two graphs.   
We prove that if $G$ and $H$ are two connected graphs having at least two vertices then (a) $\chi_{lid}(G \square H) \leq \chi(G) \chi(H)-1$ and (b) $\chi_{lid}(G \times H) \leq \chi(G) \chi(H)$. Here $G \square H$ and $G \times H$ denote the Cartesian and tensor products of $G$ and $H$ respectively. 
We determine the lid-chromatic number of $C_m \square P_n$, $C_m \square C_n$, $P_m \times P_n$, $C_m \times P_n$ and $C_m \times C_n$, where $C_m$ and $P_n$ denote a cycle and a path on $m$ and $n$ vertices respectively.

 \end{abstract}

\section{Introduction}\label{S:intro}

In this paper, we consider finite, undirected and simple graphs. For a graph $G=(V,E)$, the vertex set and edge set of $G$ are denoted by $V(G)$ and $E(G)$ respectively. The neighborhood $N(v)$ of a vertex $v$ in a graph $G$ is the set of vertices adjacent to $v$ in $G$
and $N[v]= N(v) \cup \{v\}$ denotes closed neighborhood of $v$. For a positive integer $k$, a $k$-coloring of a graph $G$ is a function $f: V(G) \rightarrow \{1,2,\ldots, k\}$.
A $k$-coloring of a graph $G$ is called \emph{proper $k$-coloring}, if $f(u) \neq f(v)$ for each edge $uv$ of $G$. The chromatic number $\chi(G)$ of a graph $G$ is the minimum $k$ for which there is a proper $k$-coloring of $G$. 
For a $k$-coloring $f$ of a graph $G$ and $X \subseteq V(G)$, we denote $f(X)=\{f(v)~|~ v \in X\}$.

Given a graph $G$ and a positive integer $k$, a proper $k$-coloring $f$ is called a \emph{locally identifying coloring} using $k$ colors (for short $k$-lid-coloring), 
if for every edge $uv \in E(G)$ with $N[u] \neq N[v]$, we have $f(N[u]) \neq f(N[v])$. The smallest integer $k$ such that there is a locally identifying coloring of $G$ using $k$ colors is called the locally identifying chromatic number of $G$ (or lid-chromatic number), denoted by $\chi_{lid}(G)$. In this paper, we consider only connected graphs since the lid-chromatic number of a graph $G$ is the maximum of the lid-chromatic numbers of its connected components.


The notion of locally identifying coloring was introduced by Esperet et al.~\cite{esperet2010locally}. The authors gave  bounds on lid-chromatic numbers for various families of graphs, such as planar graphs, interval graphs, split graphs, cographs and graphs with bounded maximum degree. They proved that the lid-chromatic number of a bipartite graph  is at most four and deciding whether 
 a bipartite graph  
is $3$ or $4$-lid-colorable 
is an 
 $\sf{NP}$-complete problem.   Foucaud et al.~\cite{foucaud2012locally} proved that any graph $G$ has a locally identifying coloring with at most $2 \Delta^2-3 \Delta+3$ colors, where $\Delta$ denotes the maximum degree of $G$.
 Goncalves et al.~\cite{gonccalves2013locally} showed that the lid-chromatic number for any graph class of bounded expansion is bounded. They also gave an upper bound on the lid-chromatic number of planar graphs. Martins and Sampaio~\cite{martins2018locally} gave linear time algorithms to calculate the lid-chromatic number for some classes of graphs having few $P_4$'s, such as  cographs, $P_4$-sparse graphs and $(q,q-4)$-graphs. We now formally introduce the definitions of Cartesian product and tensor product of graphs.


\begin{definition}[Cartesian product \cite{hammack2011handbook}]
    The Cartesian product $G\square H$ of graphs $G$ and $H$ is a graph such that $V(G\square H) = V(G) \times V(H)=\{(u,v)~|~ u \in V(G), v\in V(H)\}$,  and  $(u_1,v_1) (u_2,v_2) \in E(G \square H)$ if and only if either $u_1 = u_2$ and $v_1v_2 \in E(H)$ or $v_1 = v_2$ and $u_1u_2 \in E(G)$. 
\end{definition}
\begin{definition}[Tensor product \cite{hammack2011handbook}]
The tensor product $G \times H$ of graphs $G$ and $H$ is a graph such that $V(G \times H) = V(G) \times V(H)$ and $(u_1,v_1) (u_2,v_2) \in E(G \times H)$ if and only if $u_1u_2 \in E(G)$ and $v_1v_2 \in E(H)$.     
\end{definition}

Notice that both the Cartesian product and tensor product are commutative. That is, for any two graphs $G$ and $H$ we have $G \square H\cong H \square G$ and $G \times H \cong H \times G$~\cite{hammack2011handbook}.

Proper coloring has been well studied on various graph products~\cite{geller1975chromatic,sabidussi1957graphs,shitov2019counterexamples}.
It is known  that (a) $\chi(G\square H)=\max\{\chi(G), \chi(H)\}$~\cite{sabidussi1957graphs}, and 
 (b) $\chi(G\times H)\leq \min\{\chi(G), \chi(H)\}$~\cite{shitov2019counterexamples}.

In this paper, we investigate the lid-chromatic number of Cartesian product and tensor product of graphs. 
In Section \ref{sec:cartesian}, we prove that if $G$ and $H$ are two connected graphs having at least two vertices, then  $\chi_{lid}(G \square H) \leq \chi(G) \chi(H) - 1$.  We give exact values of lid-chromatic number of Cartesian product of (a) a cycle and a path, and  (b) two cycles.

In Section \ref{sec:tensor}, we prove that if $G$ and $H$ are two connected graphs having at least two vertices then  $\chi_{lid}(G \times H) \leq \chi(G) \chi(H)$.  We also give exact values of lid-chromatic number of tensor product of (a) two paths (b) a cycle and a path and (c) two cycles.

\section{Preliminaries} \label{sec-pre}
We use $[k]$ to denote the set $\{1, 2, \dots, k\}$.
For a positive integer $n$, we use $P_n$ to denote a path on $n$ vertices and $C_n$ to denote a cycle on $n$ vertices. Given a graph $G$ and a subset $X \subseteq V(G)$, we use $G[X]$ to denote the subgraph of $G$ induced by the vertices of $X$. For more details on graph theory, the reader can refer \cite{west2001introduction}.

\begin{lemma}[\cite{foucaud2012locally}]\label{lem:path}
For a positive integer $n$, where $n \geq 2$, we have 

$$
		\chi_{lid}(P_n) = 
			\begin{cases} 
				2 &\text{if $n =2$;} \\
				3 &\text{if $n=2p+1$ for some $p \in \mathbb{N}$; \;} \\
				4 &\text{if $n=2p+2$ for some $p \in \mathbb{N}$.} \\
			\end{cases}
$$	
\end{lemma}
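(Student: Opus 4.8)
The plan is to prove matching upper and lower bounds in all three cases, organized around one notion: for an internal vertex $v_j$ of the path $P_n=v_1v_2\cdots v_n$ (i.e. $2\le j\le n-1$), call the closed neighborhood $N[v_j]=\{v_{j-1},v_j,v_{j+1}\}$ \emph{rainbow} if its three vertices get three distinct colors. The trivial case $n=2$ is disposed of first: the two vertices form an edge with $N[v_1]=N[v_2]$, so the lid-condition is vacuous, a proper $2$-coloring works, and a single adjacent pair forces $\chi_{lid}\ge 2$, giving $\chi_{lid}(P_2)=2$.

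For $n\ge 3$ I would first record that \emph{no} edge of $P_n$ joins two vertices with equal closed neighborhoods (an easy check: the endpoint edges fail because one side has two vertices and the other three, and an internal edge $v_iv_{i+1}$ would need $v_{i-1}=v_{i+2}$), so the lid-condition must be verified on every edge. The key structural step is to show that a proper coloring of $P_n$ is a lid-coloring precisely when: \textbf{(i)} $N[v_2]$ and $N[v_{n-1}]$ are rainbow, and \textbf{(ii)} for every internal edge $v_iv_{i+1}$ with $2\le i\le n-2$, exactly one of $N[v_i]$, $N[v_{i+1}]$ is rainbow. Condition (i) holds because $N[v_1]=\{f(v_1),f(v_2)\}$ is contained in the color set of $N[v_2]$ and differs from it iff that set has three colors; symmetrically at $v_n$. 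Condition (ii) is a short case analysis using only three available colors: the windows $N[v_i]$ and $N[v_{i+1}]$ share the adjacent pair $v_i,v_{i+1}$ with fixed colors $\{a,b\}$, each window is rainbow iff its extra vertex receives the third color $c$, and the two color sets coincide iff both windows are rainbow or neither is.

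From this characterization the lower bounds are immediate. Any proper $2$-coloring of the (connected, bipartite) path is $1,2,1,2,\dots$, which makes no window rainbow, so it violates (i) for $n\ge 3$; hence $\chi_{lid}(P_n)\ge 3$. For even $n\ge 4$, a hypothetical $3$-lid-coloring would, by (i) and (ii), force the sequence of windows $N[v_2],N[v_3],\dots,N[v_{n-1}]$ to alternate rainbow/non-rainbow while beginning and ending rainbow, which requires their count $n-2$ to be odd, contradicting parity; hence $\chi_{lid}(P_n)\ge 4$. For the upper bounds I would exhibit explicit colorings. For odd $n$, the periodic pattern $1,2,3,2,1,2,3,2,\dots$ (i.e. $f(v_j)=2$ for even $j$, and $f(v_j)=1$ or $3$ according as $j\equiv1$ or $3\pmod 4$ for odd $j$) is proper, has $N[v_j]$ rainbow exactly for even $j$, so consecutive windows always differ; since $n$ is odd, $n-1$ is even and both $N[v_2]$ and $N[v_{n-1}]$ are rainbow, so (i) and (ii) hold and $\chi_{lid}(P_n)\le 3$. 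For even $n\ge 4$, start from a $3$-lid-coloring of $P_{n-1}$ (available since $n-1$ is odd) and append $v_n$ with a fresh color $4$; the coloring stays proper, only $N[v_{n-1}]$ changes and only $N[v_n]$ is new, and the edges $v_{n-2}v_{n-1}$ and $v_{n-1}v_n$ are trivially fine because $N[v_{n-1}]$ now contains $4$ while the color set of $N[v_{n-2}]$ does not and the color set of $N[v_n]$ has only two elements; all other edges are untouched, so $\chi_{lid}(P_n)\le 4$.

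Putting the bounds together yields the three stated values. I expect the main obstacle to be proving the rainbow-window characterization cleanly, in particular the alternation condition (ii) and extracting from it the parity obstruction that kills $3$ colors on even paths; once that lemma is in place, the lower bounds and the two explicit constructions for the upper bounds are routine verifications.
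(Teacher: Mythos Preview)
The paper does not actually prove this lemma; it is quoted from \cite{foucaud2012locally} in the preliminaries, so there is no in-paper argument to compare against. Your proposal stands on its own and is essentially correct: the parity obstruction via alternating rainbow windows is exactly the right mechanism, and your explicit colorings work.

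One small imprecision worth tightening: you phrase the rainbow-window characterization for ``a proper coloring of $P_n$'', but condition~(ii) --- that on each internal edge \emph{exactly one} of the two windows is rainbow --- is only \emph{equivalent} to the lid-condition when at most three colors are in use. With four or more colors, two adjacent rainbow windows can have different color sets (take colors $1,2,3,4$ on $v_{i-1},v_i,v_{i+1},v_{i+2}$). This does no damage to your argument, since you only invoke~(ii) while analyzing a putative $3$-lid-coloring or while verifying your explicit $3$-coloring for odd~$n$, and you handle the $4$-coloring for even~$n$ by a direct check. Simply restate the characterization as one for proper $3$-colorings and the proof is clean. A second, very minor point: in the even-$n$ upper bound, when checking the edge $v_{n-1}v_n$ you should note explicitly that $f(N[v_{n-1}])$ has three elements (it contains $f(v_{n-2})$, $f(v_{n-1})$, and $4$, all distinct), not merely that $f(N[v_n])$ has two.
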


\begin{lemma}[\cite{foucaud2012locally}]\label{lem-cycle}
For a positive integer $n$, where $n \geq 3$, we have 

$$
		\chi_{lid}(C_n) = 
			\begin{cases} 
				3 &\text{if $n=3$ or $n \equiv 0~(mod~4)$;} \\
				5 &\text{if $n=5$ or $7$;} \\
				4 &\text{otherwise.} \\

			\end{cases}
$$	
\end{lemma}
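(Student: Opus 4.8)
The plan is to translate the lid-condition on a cycle into a constraint on the cyclic colour sequence, settle $C_3$ and the easy bound directly, and then locate the two thresholds ($3$ versus $4$, and $4$ versus $5$) by a difference-sequence argument together with a short finite check. Since $C_3=K_3$ has all closed neighbourhoods equal to $V(C_3)$, the lid-condition is vacuous there and $\chi_{lid}(C_3)=\chi(C_3)=3$. For $n\ge 4$ every vertex of $C_n$ has a closed neighbourhood different from that of every other vertex, so writing the colours cyclically as $c_1,\dots,c_n$ (indices mod $n$), a proper colouring is a lid-colouring exactly when the colour sets of consecutive closed neighbourhoods differ, i.e.\ when for all $i$: (i) $c_{i-1}\ne c_{i+2}$, and (ii) it is not the case that both $c_{i-1}=c_{i+1}$ and $c_i=c_{i+2}$. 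This is obtained simply by expanding $\{c_{i-1},c_i,c_{i+1}\}$ against $\{c_i,c_{i+1},c_{i+2}\}$. In particular $\chi_{lid}(C_n)\ge 3$: since $\chi_{lid}\ge\chi$ this is clear for odd $n$, and for even $n$ a proper $2$-colouring assigns the colour set $\{1,2\}$ to every closed neighbourhood, violating the condition on every edge.

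For the threshold between $3$ and $4$, I would pass to the difference sequence $\delta_i:=c_{i+1}-c_i$ of a hypothetical proper $3$-colouring, with $\delta_i\in\{1,-1\}\subseteq\mathbb{Z}_3$. A short computation over $\mathbb{Z}_3$ shows that (i) forbids three consecutive equal $\delta_i$, (ii) forbids the pattern $\delta_{i-1}=\delta_{i+1}=-\delta_i$, and jointly these are equivalent to $\delta_i\ne\delta_{i+2}$ for all $i$. If $n$ is odd then $j\mapsto j+2$ is a single $n$-cycle on $\mathbb{Z}_n$, so this would force $\delta$ to alternate around a cycle of odd length, which is impossible; if $n\equiv 2\pmod 4$ it would force $\delta$ to alternate on each index-parity class, each of odd size $n/2$, again impossible. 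Hence a lid-$3$-colouring of $C_n$ (for $n\ge 4$) exists iff $n\equiv 0\pmod 4$, and for such $n$ the periodic pattern $(1,2,1,3)$ repeated $n/4$ times is immediately checked against (i)--(ii). Thus $\chi_{lid}(C_n)=3$ precisely for $n=3$ and $n\equiv 0\pmod 4$, and $\chi_{lid}(C_n)\ge 4$ otherwise.

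It remains to show $\chi_{lid}(C_n)=5$ for $n\in\{5,7\}$ and $\chi_{lid}(C_n)=4$ for the other $n\not\equiv 0\pmod 4$ with $n\ge 4$. Note first that by (i) no two vertices at distance $3$ in $C_n$ receive the same colour, so together with properness any two equally coloured vertices lie at distance exactly $2$. In $C_5$ this makes every vertex differ in colour from all four others, forcing $5$ colours; in $C_7$ each colour class then has size at most $2$, so a $4$-colouring would have class sizes $(2,2,2,1)$, and a short check of the essentially unique such configuration (three disjoint distance-$2$ monochromatic pairs) shows it always violates (ii). Together with the obvious lid-$5$-colourings (all colours distinct on $C_5$; on $C_7$ a colouring with two disjoint distance-$2$ monochromatic pairs), this gives $\chi_{lid}(C_5)=\chi_{lid}(C_7)=5$.

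For the remaining cases I would produce explicit lid-$4$-colourings and verify (i)--(ii). When $n\equiv 2\pmod 4$, place colour $1$ on all even positions and colours from $\{2,3,4\}$ on the odd positions so that consecutive odd positions always differ; then every colour-$1$ vertex has a size-$3$ closed neighbourhood and every other closed neighbourhood has size $2$, and since $C_n$ is bipartite each edge joins such a pair, so their colour sets differ. When $n$ is odd with $n\ge 9$, take a $(1,2,1,3)$-type pattern interrupted by a short fixed-length patch at one point of the cycle, yielding a base colouring for each of the odd residues modulo $4$, which is then enlarged by inserting whole $(1,2,1,3)$ blocks. I expect the main obstacle to be twofold: the finite case analysis forcing $5$ colours on $C_7$, which has no clean algebraic shortcut, and the behaviour at the seam in the odd case, where $C_n$ is not bipartite so the tidy ``one colour on one side'' trick is unavailable and conditions (i)--(ii) must be checked through the patch by hand; the rest is routine verification.
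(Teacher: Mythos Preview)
The paper does not prove this lemma: it is quoted in the Preliminaries as a known result from \cite{foucaud2012locally}, with no argument supplied. So there is no ``paper's own proof'' to compare against; your sketch stands on its own.

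As a self-contained argument your outline is sound. The reformulation of the lid-condition on $C_n$ ($n\ge 4$) into the pair of constraints (i)~$c_{i-1}\ne c_{i+2}$ and (ii)~not both $c_{i-1}=c_{i+1}$ and $c_i=c_{i+2}$ is correct, and the difference-sequence reduction to $\delta_i\ne\delta_{i+2}$ in $\mathbb{Z}_3$ is a clean way to pin down exactly when a $3$-lid-colouring exists. Your treatment of $C_5$ and $C_7$ is right: for $C_7$ one can check that, up to rotation, the only way to place three disjoint distance-$2$ monochromatic pairs leaving a singleton at position $0$ is $\{1,6\},\{2,4\},\{3,5\}$, and then condition~(ii) fails at $i=3$. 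The $n\equiv 2\pmod 4$ construction is also fine; your size-of-colour-set argument and the direct verification of (i)--(ii) both go through.

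The one place where your write-up is genuinely incomplete is the odd case $n\ge 9$: you describe the intended shape (a $(1,2,1,3)$ base with a short patch, extended by whole blocks) but do not exhibit the patch or verify (i)--(ii) across the seam. This is routine once a concrete patch is written down --- for instance, one block of length $5$ such as $1,2,3,4,2$ appended to copies of $1,2,1,3$ covers $n\equiv 1\pmod 4$, and an analogous length-$7$ block covers $n\equiv 3\pmod 4$ --- but as written it is an IOU rather than a proof. If you intend this as more than a sketch, spell out one explicit patch per residue class and check the four or five windows that straddle the seam.
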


Next, we review some results from \cite{esperet2010locally} that are used to prove some of our results.
\begin{lemma}[\cite{esperet2010locally}]\label{lem-bipartite-triangle}
If a connected graph $G$ satisfies  $\chi_{lid}(G) \leq 3$, then $G$ is either a triangle or a bipartite graph.
\end{lemma}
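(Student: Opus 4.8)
The plan is to prove the contrapositive: if $G$ is connected, is not a triangle, and is not bipartite, then $\chi_{lid}(G)\ge 4$. Since every lid-coloring is in particular a proper coloring, $\chi_{lid}(G)\ge\chi(G)\ge 3$, the last inequality because a non-bipartite graph contains an odd cycle. So I would suppose for contradiction that $\chi_{lid}(G)=3$ and fix a lid-coloring $f\colon V(G)\to\{1,2,3\}$; for a vertex $v$ write $S(v)$ for the set of colors appearing in $N[v]$, so $f(v)\in S(v)$ and $\lvert S(v)\rvert\le 3$. As $G$ is non-bipartite it contains an odd cycle, and I would split according to whether or not $G$ contains a triangle.

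In the triangle case, pick a triangle $xyz$. A proper coloring gives its three vertices three distinct colors, so $S(x)=S(y)=S(z)=\{1,2,3\}$. Since $x$ and $y$ are adjacent with $S(x)=S(y)$, the lid property forces $N[x]=N[y]$, and likewise $N[y]=N[z]$; put $S:=N[x]=N[y]=N[z]$. Any vertex of $S\setminus\{x,y,z\}$ would be adjacent to each of $x,y,z$ and hence would need a color different from all of $1,2,3$, which is impossible; thus $S=\{x,y,z\}$, so $\{x,y,z\}$ is a connected component of $G$. Connectedness then forces $G$ to be this triangle, contradicting the hypothesis.

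In the triangle-free case, take a shortest odd cycle $C=v_0v_1\cdots v_{g-1}$ (indices mod $g$); minimality makes $C$ chordless and triangle-freeness gives $g\ge 5$. Two facts drive the argument. First, for every edge $v_kv_{k+1}$ the vertex $v_{k-1}$ lies in $N[v_k]$ but not in $N[v_{k+1}]$ (it is distinct from $v_{k+1}$ and, since $C$ is chordless with $g\ge5$, non-adjacent to it), so $N[v_k]\ne N[v_{k+1}]$ and the lid property yields $S(v_k)\ne S(v_{k+1})$. Second, $f$ restricted to $C$ is a proper $3$-coloring of an odd cycle, hence uses all three colors, and it cannot happen that $f(v_{j-1})=f(v_{j+1})$ for every $j$ (that would make $f$ constant on $C$, since $i\mapsto i+2$ is a single cycle on $\mathbb{Z}_g$ for odd $g$); so some three consecutive vertices $v_{j-1},v_j,v_{j+1}$ get three distinct colors, giving $S(v_j)=\{1,2,3\}$. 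The heart of the proof is the propagation claim: if $S(v_i)=\{1,2,3\}$ then $S(v_{i+2})=\{1,2,3\}$. Indeed, by the first fact $S(v_{i+1})\ne\{1,2,3\}$, so $\lvert S(v_{i+1})\rvert=2$, which (as $f(v_i)\ne f(v_{i+1})\ne f(v_{i+2})$) forces $f(v_{i+2})=f(v_i)$ and $S(v_{i+1})=\{f(v_i),f(v_{i+1})\}$; applying the first fact to $v_{i+1}v_{i+2}$ gives $S(v_{i+2})\ne\{f(v_i),f(v_{i+1})\}$, while $S(v_{i+2})\supseteq\{f(v_{i+1}),f(v_{i+2}),f(v_{i+3})\}\supseteq\{f(v_i),f(v_{i+1})\}$, so $S(v_{i+2})$ must be all of $\{1,2,3\}$. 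Since $g$ is odd, iterating from $v_j$ forces $S(v_i)=\{1,2,3\}$ for every $i$, contradicting the already derived $S(v_{j+1})=\{f(v_j),f(v_{j+1})\}$.

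I expect the propagation step in the triangle-free case to be the main obstacle: one must argue carefully that chordlessness of $C$ together with $g\ge5$ is exactly what prevents the consecutive closed neighborhoods $N[v_i]$ and $N[v_{i+1}]$ from coinciding, so that the lid property can be invoked all the way around the cycle, and one must keep precise track of which colors occupy each $S(v_i)$. By contrast, the triangle case and the production of a rainbow arc on the odd cycle are comparatively routine.
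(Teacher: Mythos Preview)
Your argument is correct. The paper does not prove this lemma; it simply cites it from Esperet, Gravier, Montassier, Ochem and Parreau (2012), so there is no ``paper's own proof'' to compare against. What you have written is a complete self-contained proof, not merely a plan: both the triangle case (forcing $N[x]=N[y]=N[z]=\{x,y,z\}$ and hence $G=K_3$) and the triangle-free case (propagating the condition $S(v_i)=\{1,2,3\}$ two steps at a time around a shortest odd cycle) go through exactly as you describe. Two minor remarks: in the propagation step the term $f(v_{i+3})$ is unnecessary, since $S(v_{i+2})\supseteq\{f(v_{i+1}),f(v_{i+2})\}=\{f(v_{i+1}),f(v_i)\}$ already suffices; and for the first fact it is enough to invoke triangle-freeness directly (an edge $v_{k-1}v_{k+1}$ would create the triangle $v_{k-1}v_kv_{k+1}$), so chordlessness of $C$ is not really needed there.
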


\begin{theorem}[\cite{esperet2010locally}]
\label{lem-bipartite}
 If $G$ is a bipartite graph, then $\chi_{lid}(G) \leq 4$.
\end{theorem}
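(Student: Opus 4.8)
The plan is to use only four colors, coloring one part of a bipartition of $G$ with colors from $\{1,2\}$ and the other part with colors from $\{3,4\}$, and then choosing the two $2$-colorings carefully. Since the lid condition constrains only pairs of adjacent vertices, $\chi_{lid}(G)$ equals the maximum of $\chi_{lid}$ over the connected components of $G$, and by Lemma~\ref{lem:path} the components isomorphic to $K_1$ or $K_2$ need at most two colors. So I may assume $G$ is connected with at least three vertices (hence $G\neq K_2$) and fix a bipartition $(A,B)$.

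Color $A$ with colors from $\{1,2\}$ and $B$ with colors from $\{3,4\}$; this is automatically a proper coloring since every edge joins $A$ to $B$. Two observations reduce the lid condition to a clean statement. First, for every edge $uv$ with $u\in A$ and $v\in B$ we have $N[u]\neq N[v]$: equality would force $N(u)=N[u]\cap B=N[v]\cap B=\{v\}$ and, symmetrically, $N(v)=\{u\}$, i.e.\ $G=K_2$. Second, writing $c(X)$ for the set of colors appearing on a vertex set $X$, the color set of $N[u]$ is $\{c(u)\}\cup c(N(u))$ with $c(u)\in\{1,2\}$ and $c(N(u))\subseteq\{3,4\}$, while the color set of $N[v]$ is $\{c(v)\}\cup c(N(v))$ with $c(v)\in\{3,4\}$ and $c(N(v))\subseteq\{1,2\}$; hence these two color sets coincide exactly when $c(N(u))=\{c(v)\}$ and $c(N(v))=\{c(u)\}$, that is, exactly when both $N(u)$ and $N(v)$ are monochromatic. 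Therefore the chosen coloring is a lid-coloring if and only if no edge joins a vertex of $A$ with monochromatic open neighborhood to a vertex of $B$ with monochromatic open neighborhood. Writing $M_A\subseteq A$ and $M_B\subseteq B$ for these two ``bad'' vertex sets, the problem reduces to choosing the two $2$-colorings so that no edge of $G$ runs between $M_A$ and $M_B$.

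Observe that $M_A$ necessarily contains all degree-$1$ vertices of $A$ (their open neighborhood is a single vertex), and similarly for $M_B$; but in a connected graph other than $K_2$ the degree-$1$ vertices form an independent set, so there is no obstruction in principle. I would construct the two colorings in two stages. In the first stage I handle the forced moves: whenever $u\in A$ has a degree-$1$ neighbor, I fix one such neighbor $v_u\in B$ to color $3$ and a second neighbor $w_u\in N(u)\setminus\{v_u\}$ (which exists since $\deg(u)\ge 2$) to color $4$; since a degree-$1$ vertex lies in exactly one open neighborhood, the chosen $v_u$'s are pairwise distinct and disjoint from the $w_u$'s, so these assignments are consistent and already guarantee $u\notin M_A$. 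I perform the mirror construction on $A$ for the vertices of $B$ with a degree-$1$ neighbor. This settles every edge incident to a degree-$1$ vertex. In the second stage I color the still-uncolored vertices by propagating along a spanning tree of $G$, alternating within each palette as I walk along each maximal path whose internal vertices have degree $2$ (so that those internal vertices also receive bichromatic neighborhoods), and I resolve the remaining conflicts at vertices of degree $\ge 3$ and around the ``seams'' of cycles by local adjustments using the spare color.

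The heart of the argument, and the step I expect to be the real obstacle, is this second stage. One cannot hope to make \emph{every} open neighborhood bichromatic --- already $C_6$ admits no $\{1,2\}$/$\{3,4\}$ coloring with that property --- so instead one must guarantee that whenever a vertex is forced into $M_A$ none of its neighbors is forced into $M_B$, and conversely. Making this precise simultaneously for the degree-$2$ chains and the high-degree branch vertices, and handling cycles whose length is not a multiple of $4$, is exactly where the bipartiteness of $G$ is indispensable (for general graphs $\chi_{lid}$ is unbounded, e.g.\ $\chi_{lid}(K_n)=n$, since in $K_n$ no two adjacent vertices have distinct closed neighborhoods). Once it is shown that $M_A$ and $M_B$ span no edge, the coloring is a valid lid-coloring, and $\chi_{lid}(G)\le 4$ follows.
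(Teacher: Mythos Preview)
This theorem is quoted from \cite{esperet2010locally} and the present paper does not supply its own proof, so there is no in-paper argument to compare your attempt against.

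On the merits: your reduction is correct. With $A$ colored from $\{1,2\}$ and $B$ from $\{3,4\}$, the lid condition on an edge $uv$ fails precisely when both $N(u)$ and $N(v)$ are monochromatic, so the task is exactly to $2$-color each side so that the ``monochromatic-neighborhood'' sets $M_A\subseteq A$ and $M_B\subseteq B$ have no edge between them. That is a clean and standard reformulation.

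The genuine gap is that you never carry out this construction. Your first stage only forces $u\notin M_A$ for those $u$ that happen to have a leaf neighbor; it says nothing about the remaining edges, and in any graph of minimum degree~$2$ your first stage is vacuous. Your second stage is a one-sentence sketch --- propagate along a spanning tree, alternate on degree-$2$ chains, make ``local adjustments'' at branch vertices and around cycle seams --- and you yourself flag it as ``the step I expect to be the real obstacle.'' But that step \emph{is} the theorem: producing a proper coloring with the $\{1,2\}/\{3,4\}$ split is trivial, and the entire content lies in exhibiting one with no $M_A$--$M_B$ edge. Your own $C_6$ remark shows that naive alternation can fail, and ``resolve the remaining conflicts by local adjustments using the spare color'' is not an argument. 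As it stands, the proposal is a correct reduction followed by an unproved assertion that a suitable coloring exists; it is not a proof.
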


\begin{theorem}[\cite{esperet2010locally}] \label{lem-regular}
For $k \geq 4$, a $k$-regular graph is $3$-lid-colorable if and only if it is bipartite.
\end{theorem}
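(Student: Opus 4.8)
The plan is to prove the two implications separately: the forward direction is immediate from Lemma~\ref{lem-bipartite-triangle}, while the reverse direction is an explicit coloring construction whose heart is a hypergraph two‑colorability statement.

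\emph{Forward direction.} Suppose $G$ is $k$-regular with $k\ge 4$ and $\chi_{lid}(G)\le 3$. It suffices to show that every connected component of $G$ is bipartite, so I may assume $G$ is connected: each component is again $k$-regular, and the restriction of a lid-coloring of $G$ to a component is a lid-coloring of that component since closed neighborhoods inside a component are unchanged. By Lemma~\ref{lem-bipartite-triangle}, a connected graph with $\chi_{lid}\le 3$ is either a triangle or bipartite; a triangle is $2$-regular while $k\ge 4$, so $G$ must be bipartite.

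\emph{Reverse direction.} Let $G$ be $k$-regular bipartite with $k\ge 4$. Again it suffices to $3$-lid-color each component, so assume $G$ is connected with (unique) bipartition $A\cup B$. The coloring I would use: give every vertex of $A$ color $1$, pick a set $S\subseteq B$, and give color $3$ to the vertices of $S$ and color $2$ to the vertices of $B\setminus S$. This is automatically a proper $3$-coloring. I claim it is a lid-coloring as soon as $S$ is chosen so that every $a\in A$ has at least one neighbor in $S$ and at least one neighbor in $B\setminus S$: in that case the color set of $N[a]$ equals $\{1,2,3\}$ for every $a\in A$, whereas for every $b\in B$ (all of whose neighbors lie in $A$) the color set of $N[b]$ is $\{1,2\}$ or $\{1,3\}$, so the two endpoints of each edge get distinct color sets. (Every edge genuinely needs checking here, because in a $k$-regular bipartite graph with $k\ge 2$ one verifies at once that adjacent vertices never have equal closed neighborhoods.) Assembling the colorings of the components then completes the construction.

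\emph{The main obstacle.} Everything thus reduces to the following statement, which I expect to be the crux: \emph{a $k$-regular bipartite graph with parts $A,B$ and $k\ge 4$ admits $S\subseteq B$ such that both $S$ and $B\setminus S$ dominate $A$} — equivalently, the $k$-uniform hypergraph $\{N(a):a\in A\}$ on vertex set $B$ is properly $2$-colorable (no monochromatic hyperedge). This is exactly where the hypothesis $k\ge 4$ is used: for $k=3$ the Heawood graph realizes the incidence hypergraph of the Fano plane, which is not $2$-colorable, so the scheme fails there, and this is why the theorem requires $k\ge 4$. For $k\ge 4$ I would argue probabilistically: a uniformly random $S$ leaves a fixed set $N(a)$ monochromatic with probability $2^{1-k}$, and since each vertex of $B$ lies in exactly $k$ of these sets, each set $N(a)$ shares a vertex with at most $k(k-1)$ others, so the Lovász Local Lemma settles all sufficiently large $k$; the few remaining small values of $k$ I would dispatch by a direct recoloring/alteration argument or by quoting known property-$B$ bounds for bounded-degree uniform hypergraphs. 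A more combinatorial alternative is to build $S$ by two-coloring the vertices of $B$ as one walks along the even cycles of $M_1\cup M_2$ for two perfect matchings $M_1,M_2$ of $G$ (which exist by König's edge-coloring theorem): this works immediately when every such cycle has length divisible by $4$, and the cycles of length $\equiv 2\pmod 4$ are the real difficulty, to be repaired with the help of a third matching $M_3$.
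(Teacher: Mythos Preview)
First, note that the paper under review does not prove this theorem at all: it is quoted from \cite{esperet2010locally} as a black box and then applied (in Lemma~\ref{lem-tensor-even}) to $4$-regular bipartite graphs. So there is no in-paper proof to compare your proposal against; the relevant question is whether your outline would actually yield a proof.

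Your forward direction is correct and complete. Your reverse direction makes the right reduction: color $A$ monochromatically and seek $S\subseteq B$ so that every $a\in A$ has a neighbour in $S$ and one in $B\setminus S$; this is exactly $2$-colorability of the $k$-uniform, $k$-regular hypergraph $\{N(a):a\in A\}$ on ground set $B$. The problem is that you do not prove this hypergraph is $2$-colorable for $k\ge 4$, and this is the entire content of the reverse implication. The symmetric Lov\'asz Local Lemma gives the condition $e\cdot 2^{1-k}\cdot\bigl(k(k-1)+1\bigr)\le 1$, which fails for $k\in\{4,5,6,7,8\}$ and only kicks in at $k\ge 9$; so ``the few remaining small values'' are five values, including the crucial $k=4$ that the present paper actually uses. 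Your matching alternative is likewise explicitly unfinished: as you note, $M_1\cup M_2$ can have cycles of length $\equiv 2\pmod 4$, and the promised ``repair with $M_3$'' is not carried out. The statement that every $k$-uniform $k$-regular hypergraph has property~B for all $k\ge 4$ is a genuine theorem, not a routine exercise, and cannot be waved away with ``a direct recoloring/alteration argument.''

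In short: the architecture is right, but the load-bearing lemma is missing. Either supply a self-contained proof of the hypergraph $2$-colorability for $k\ge 4$ (the $M_1\cup M_2$ idea can be made to work, but the odd-$\ell$ cycles require a careful argument using the remaining matchings), or cite the relevant hypergraph result explicitly rather than sketching two incomplete approaches.
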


\begin{theorem}[\cite{esperet2010locally}]\label{th-bipartite}
Let $G$ and $H$ be two connected bipartite graphs.  Then we have $\chi_{lid}(G\square H) = 3$.
\end{theorem}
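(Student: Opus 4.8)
The plan is to establish the two inequalities $\chi_{lid}(G\square H)\ge 3$ and $\chi_{lid}(G\square H)\le 3$ separately; throughout I assume (as the statement implicitly requires) that $G$ and $H$ each have at least two vertices, so that each has an edge and $G\square H$ has minimum degree at least $2$. Since $G\square H$ is connected and bipartite, Theorem~\ref{lem-bipartite} already gives $\chi_{lid}(G\square H)\le 4$, so the real content is to improve this to $3$ and to rule out $2$ colors.

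For the lower bound I would use the observation that in any proper $2$-coloring, every vertex of positive degree has $\{1,2\}$ as the set of colors appearing in its closed neighborhood; hence if some two adjacent vertices have distinct closed neighborhoods, the $2$-coloring violates the lid-condition. It then suffices to produce such a pair in $G\square H$. Fixing an edge $gg'$ of $G$ and an edge $hh'$ of $H$, the vertices $(g,h)$ and $(g',h)$ are adjacent in $G\square H$, but $(g,h')$ lies in $N[(g,h)]$ and not in $N[(g',h)]$ (it differs from $(g',h)$ in both coordinates), so their closed neighborhoods differ. This forces $\chi_{lid}(G\square H)\ge 3$.

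For the upper bound I would write down an explicit $3$-coloring. Let $A_G\cup B_G$ and $A_H\cup B_H$ be the bipartitions of $G$ and $H$, so that the bipartition of $G\square H$ is $X\cup Y$ with $X=(A_G\times A_H)\cup(B_G\times B_H)$ and $Y=(A_G\times B_H)\cup(B_G\times A_H)$. Color every vertex of $X$ with $1$, every vertex of $B_G\times A_H$ with $2$, and every vertex of $A_G\times B_H$ with $3$. The verification splits into two routine parts. First, properness: a short case analysis on the two kinds of edges of $G\square H$ (those changing the $G$-coordinate and those changing the $H$-coordinate) shows that every edge joins a vertex of $X$, colored $1$, to a vertex of $Y$, colored $2$ or $3$. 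Second, the lid-condition: each $y\in Y$ has all its neighbors in $X$, so the colors appearing in $N[y]$ are exactly $\{1,f(y)\}$; on the other hand each $(a,a')\in A_G\times A_H$ has, using connectivity of $H$, a neighbor $(a,b')$ colored $3$ and, using connectivity of $G$, a neighbor $(b,a')$ colored $2$, so the colors in $N[(a,a')]$ are all of $\{1,2,3\}$, and symmetrically for vertices of $B_G\times B_H$. Thus every vertex of $X$ sees three colors while every vertex of $Y$ sees exactly two; since every edge of $G\square H$ joins $X$ to $Y$, adjacent vertices always have distinct closed-neighborhood color sets, in particular those with distinct closed neighborhoods. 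Hence the coloring is a lid-coloring with $3$ colors, so $\chi_{lid}(G\square H)\le 3$, and with the lower bound we get $\chi_{lid}(G\square H)=3$.

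I do not anticipate a real obstacle here. The one point that needs thought is how to split $Y$ between colors $2$ and $3$: the split into $B_G\times A_H$ and $A_G\times B_H$ is exactly what guarantees that every vertex of $X$ --- all of which get color $1$ --- sees both of the other colors, because connectivity of $G$ and of $H$ ensures that every vertex of either factor has a neighbor on the opposite side of the bipartition. One must also remember to confirm that this particular split keeps the $3$-coloring proper.
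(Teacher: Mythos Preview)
The paper does not prove Theorem~\ref{th-bipartite}; it is quoted from \cite{esperet2010locally} and used as a black box (e.g., in Case~3 of Theorem~\ref{th-CmPn} and in Lemma~\ref{lem-Ceven_Ceven}). So there is no in-paper proof to compare against.

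Your argument is correct. The lower bound is exactly Lemma~\ref{lem-obs} together with the observation of Lemma~\ref{lem-car-nbhd}, and your self-contained version of it is fine. For the upper bound, your explicit $3$-coloring --- all of $X=(A_G\times A_H)\cup(B_G\times B_H)$ in color $1$, and the two halves of $Y$ split between colors $2$ and $3$ according to which factor contributes the $A$-part --- works: properness is immediate since every edge of $G\square H$ crosses the bipartition $X\cup Y$, and the lid-condition follows because vertices of $X$ see all three colors (this is where connectivity and $|V(G)|,|V(H)|\ge 2$ are used, to guarantee a neighbor in each of $B_G\times A_H$ and $A_G\times B_H$), while vertices of $Y$ see exactly two. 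Your explicit note that the theorem tacitly requires each factor to have at least two vertices is apt; without it the conclusion can fail (e.g., $K_1\square P_4$ has lid-chromatic number~$4$).
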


\begin{lemma}[\cite{esperet2010locally}]\label{lem-obs}
 A connected graph $G$ is 2-lid-colorable if and only if $G$ has at most two vertices.
\end{lemma}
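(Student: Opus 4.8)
The plan is to prove both implications directly from the definition, the point being that with only two colors the closed-neighborhood color sets degenerate.

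For the ``if'' direction, a connected graph on at most two vertices is either $K_1$ or $K_2$. For $K_1$ any single-color proper coloring is trivially a lid-coloring, since there are no edges. For $K_2$ give the two vertices the two colors; this is proper, and the unique edge joins two vertices with the same closed neighborhood, so the lid-condition imposes nothing. Hence in both cases $G$ admits a lid-coloring with at most two colors.

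For the ``only if'' direction I would assume $G$ is connected, carries a lid-coloring $f\colon V(G)\to\{1,2\}$, and, for contradiction, that $|V(G)|\ge 3$. The key first step is that since $G$ is connected on at least two vertices every vertex $u$ has a neighbor $v$, and properness forces $f(v)\neq f(u)$; hence the set of colors occurring in $N[u]$ equals $\{1,2\}$ for \emph{every} vertex $u$. Consequently, for every edge $uv$ the color sets of $N[u]$ and $N[v]$ coincide (both equal $\{1,2\}$), so the lid-condition can only hold if in fact $N[u]=N[v]$ for every edge $uv$.

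The final step is to propagate this equality: along any path $w_0w_1\cdots w_\ell$ we get $N[w_0]=N[w_1]=\dots=N[w_\ell]$, and by connectedness this yields $N[x]=N[y]$ for all pairs $x,y$, so every two distinct vertices are adjacent and $G$ is a complete graph on at least three vertices. But such a graph contains a triangle and cannot be properly $2$-colored, contradicting the existence of $f$; therefore $|V(G)|\le 2$. I do not expect a genuine obstacle here: one only has to treat the boundary case $|V(G)|=2$ (where $G=K_2$ is fine) separately, and the whole argument rests on the single observation that two colors make all closed-neighborhood color sets identical, collapsing the lid-condition into a true-twin requirement that no triangle-free connected graph on three or more vertices can satisfy.
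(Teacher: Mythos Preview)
Your argument is correct. Both directions are handled cleanly: the ``if'' direction is immediate, and for the ``only if'' direction the crucial observation is exactly the one you isolate---with only two colors and no isolated vertices, every closed neighborhood sees both colors, so the lid-condition forces $N[u]=N[v]$ along every edge, which by connectedness makes $G$ complete and hence not properly $2$-colorable once $|V(G)|\ge 3$.

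As for comparison with the paper: this lemma is merely quoted from~\cite{esperet2010locally} and is not reproved in the present paper, so there is no in-paper proof to compare against. Your self-contained argument is the standard one and would serve perfectly well as a proof here.
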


Lid-coloring is not monotone under
taking subgraphs that is, if $H$ is a subgraph of $G$ then the lid-chromatic number of $H$ may be more than the lid-chromatic number of $G$. 
 
\section{Cartesian product}\label{sec:cartesian}

In this section, we provide an upper bound  on the lid-chromatic number of the Cartesian product of two arbitrary graphs. Next, we determine the lid-chromatic number of $C_m \square P_n$ and $C_m \square C_n$.

\subsection{Cartesian product of two arbitrary graphs}

\begin{lemma}\label{lem-car-nbhd}
 Let $G$ and $H$ be two connected graphs having at least two vertices.   If $(u_1,v_1)$ and $(u_2,v_2)$ are two adjacent vertices in $G \square H$, then we have $N[(u_1,v_1)] \neq N[(u_2,v_2)]$.
\end{lemma}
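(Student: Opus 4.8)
The plan is to use the definition of adjacency in the Cartesian product together with connectivity to exhibit a vertex lying in one closed neighborhood but not the other. Recall that $(a_1,b_1)$ and $(a_2,b_2)$ are adjacent in $G \square H$ exactly when either $a_1 = a_2$ and $b_1b_2 \in E(H)$, or $b_1 = b_2$ and $a_1a_2 \in E(G)$; in particular, the two endpoints of any edge of $G \square H$ agree in precisely one coordinate.

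Since $(u_1,v_1)$ and $(u_2,v_2)$ are adjacent in $G \square H$, by the symmetry between $G$ and $H$ we may assume $v_1 = v_2$ (call this common value $v$) and $u_1u_2 \in E(G)$; in particular $u_1 \neq u_2$. Because $H$ is connected and has at least two vertices, the vertex $v$ has a neighbor $v'$ in $H$. I would then consider the vertex $(u_1, v')$: it belongs to $N[(u_1,v)]$ since $(u_1,v)(u_1,v') \in E(G \square H)$.

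It remains to verify that $(u_1,v') \notin N[(u_2,v)]$. First, $(u_1,v') \neq (u_2,v)$ because $v' \neq v$. Second, $(u_1,v')$ is not adjacent to $(u_2,v)$: an edge of $G \square H$ forces equality in one of the two coordinates, but here the first coordinates $u_1$ and $u_2$ differ and the second coordinates $v'$ and $v$ differ. Hence $(u_1,v') \in N[(u_1,v_1)] \setminus N[(u_2,v_2)]$, so the two closed neighborhoods are distinct.

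The argument is short, and the only point requiring care is ensuring that the auxiliary neighbor $v'$ exists; this is exactly where the hypothesis that both $G$ and $H$ are connected and have at least two vertices is used (in the symmetric case $u_1 = u_2$ and $v_1v_2 \in E(H)$, one instead picks a neighbor of $u_1$ in $G$ and argues identically). I do not anticipate any real obstacle.
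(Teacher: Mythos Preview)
Your proof is correct and follows essentially the same approach as the paper: exhibit a vertex in one closed neighborhood but not the other by using connectivity of the ``other'' factor to find an auxiliary neighbor, then observe that the resulting vertex differs from the second endpoint in both coordinates and hence cannot be adjacent to it in $G \square H$. The only cosmetic difference is that the paper writes out the case $u_1=u_2$ explicitly and defers $v_1=v_2$ by symmetry, whereas you do the reverse.
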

\begin{proof}
Let $(u_1,v_1)$ and $(u_2,v_2)$ be two adjacent vertices  in $G \square H$. 
Then we have either (a) $u_1=u_2$ and $v_1v_2 \in E(H)$ or (b) $v_1=v_2$ and $u_1u_2 \in E(G)$.

\medskip
\noindent
\textbf{Case~1:} $u_1=u_2$ and $v_1v_2 \in E(H)$. 

As $G$ is connected and $|V(G)| \geq 2$, there exists a vertex $u_3 \in N(u_1)$.
  It is easy to see that $(u_1,v_1)(u_3,v_1)\in E(G\square H)$ and $(u_2,v_2)(u_3,v_1)\notin E(G\square H)$. That is, $(u_3,v_1) \in N[(u_1,v_1)]$ and  $(u_3,v_1) \notin N[(u_2,v_2)]$. Hence, $N[(u_1,v_1)] \neq N[(u_2,v_2)]$.

\medskip
\noindent  
\textbf{Case~2:} $v_1=v_2 $ and $u_1u_2 \in E(G)$.  

 The proof of this case is similar to the proof of Case~1.  
  \qed
\end{proof}

\begin{theorem}\label{thm:generalcart}
Let $G$ and $H$ be two connected graphs having at least two vertices. Then, $\chi_{lid}(G \square H) \leq \chi(G) \chi(H)$.
\end{theorem}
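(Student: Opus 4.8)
The plan is to take proper colorings $f\colon V(G)\to[\chi(G)]$ and $g\colon V(H)\to[\chi(H)]$ and combine them into the product coloring $h\colon V(G\square H)\to[\chi(G)]\times[\chi(H)]$ defined by $h(u,v)=(f(u),g(v))$. Since $(u_1,v_1)$ and $(u_2,v_2)$ adjacent in $G\square H$ forces either $u_1=u_2,\ v_1v_2\in E(H)$ (so $g(v_1)\neq g(v_2)$) or $v_1=v_2,\ u_1u_2\in E(G)$ (so $f(u_1)\neq f(u_2)$), the coloring $h$ is proper, and it uses exactly $\chi(G)\chi(H)$ colors. It remains to verify the local identification condition. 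By Lemma~\ref{lem-car-nbhd}, every pair of adjacent vertices in $G\square H$ already has distinct closed neighborhoods, so the condition must be checked for \emph{all} adjacent pairs: for adjacent $(u_1,v_1),(u_2,v_2)$ we must exhibit a color appearing in the closed neighborhood of one but not the other.

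First I would set up notation for the color set of a closed neighborhood: write $C(u,v)$ for the set of colors $h$ assigns to $N[(u,v)]$. Observe that $N[(u,v)]=\big(N_G[u]\times\{v\}\big)\cup\big(\{u\}\times N_H[v]\big)$, so $C(u,v)=\big(f(N_G[u])\times\{g(v)\}\big)\cup\big(\{f(u)\}\times g(N_H[v])\big)$. Then I would split into the two cases of Lemma~\ref{lem-car-nbhd}. In Case~1 ($u_1=u_2=:u$, $v_1v_2\in E(H)$), the second coordinates of the colors differ along the $H$-direction; the natural witness is a color on $N_G[u]$ at one fixed $H$-level that is absent at the other. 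Concretely, since $G$ is connected with $|V(G)|\ge 2$ there is $u_3\in N(u)$, and the color $(f(u_3),g(v_1))$ lies in $C(u_1,v_1)$; I would argue it cannot lie in $C(u_2,v_2)$ unless $g(v_1)\in g(N_H[v_2])$, which would need some neighbor of $v_2$ (or $v_2$ itself) colored $g(v_1)$ — and then I'd pick the witness more carefully, e.g. using that $f$ is a proper coloring so $f(u_3)\neq f(u)$, to separate the two color sets. Case~2 is symmetric.

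The main obstacle I anticipate is that the product coloring, while proper, need \emph{not} by itself always locally-identify: one can have $g(v_1)\in g(N_H[v_2])$ and $g(v_2)\in g(N_H[v_1])$ simultaneously, so a crude "pick any neighbor color" argument can fail, and the two closed-neighborhood color sets could coincide even though the closed neighborhoods themselves differ. The fix I would pursue is to exploit the asymmetry between the $G$-layer colors and the $H$-layer colors attached at a vertex: the color $(f(u),g(v))$ of $(u,v)$ itself, together with the colors $f(N_G[u])\times\{g(v)\}$ sitting entirely at $H$-level $g(v)$, give structural information that a color at $H$-level $g(v_1)\neq g(v_2)$ cannot be fully mirrored on the other side. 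If the clean product coloring still resists in a corner case, the fallback is to perturb it on a bounded part of one coordinate (as is typical in lid-coloring constructions), but I expect the direct argument via Lemma~\ref{lem-car-nbhd} and the layered structure of $N[(u,v)]$ to go through, since we are allowed the full budget $\chi(G)\chi(H)$ rather than the sharper $\chi(G)\chi(H)-1$ claimed in the abstract.
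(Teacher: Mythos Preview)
Your approach is exactly the paper's: define the product coloring $h(u,v)=(f(u),g(v))$ and, in Case~1, exhibit the witness color $(f(u_3),g(v_1))$ for some $u_3\in N_G(u_1)$. The obstacle you anticipate does not arise, and your own formula already shows why: since
\[
C(u_1,v_2)=\bigl(f(N_G[u_1])\times\{g(v_2)\}\bigr)\cup\bigl(\{f(u_1)\}\times g(N_H[v_2])\bigr),
\]
every color in $C(u_1,v_2)$ has first coordinate $f(u_1)$ \emph{or} second coordinate $g(v_2)$. As $f(u_3)\neq f(u_1)$ and $g(v_1)\neq g(v_2)$, the witness $(f(u_3),g(v_1))$ is absent from $C(u_1,v_2)$ outright---whether or not $g(v_1)\in g(N_H[v_2])$. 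No perturbation or fallback is needed; the paper's proof is precisely this two-line check (and Case~2 is symmetric).
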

\begin{proof}
Let $\chi(G)=k_1 \geq 2$ and $\chi(H)=k_2\geq 2$.
Let $f_G: V(G) \rightarrow [k_1]$ and $f_H: V(H) \rightarrow [k_2]$ are proper colorings of $G$ and  $H$ respectively. Using the colorings $f_G$ and $f_H$, we construct a lid-coloring of $G \square H$. Define a coloring $g:V(G\square H)\rightarrow [k_1] \times [k_2]$ such that  
for each $(u,v)\in V(G\square H)$,  $g((u,v))=(f_G(u),f_H(v))$.  Now, we show that $g$ is a lid-coloring of $G \square H$.

Let $(u_1,v_1)$ and $(u_2,v_2)$ be two adjacent vertices of $G \square H$. We know that either (a) $u_1=u_2$ and $v_1v_2 \in E(H)$ or (b) $v_1=v_2$ and $u_1u_2 \in E(G)$.

\medskip
\noindent
\textbf{Case~1:} $u_1=u_2$ and $v_1v_2 \in E(H)$.

In this case $g((u_1,v_1))\neq g((u_1,v_2))$ because $f_H(v_1) \neq f_H(v_2)$. 
From Lemma~\ref{lem-car-nbhd}, we know that $N[(u_1,v_1)] \neq N[(u_1,v_2)]$ and $(u_3,v_1) \in N[(u_1,v_1)] \setminus N[(u_1,v_2)]$.  Notice that $g((u_3,v_1))=(f_G(u_3),f_H(v_1))$. It is easy to see that the color $g((u_3,v_1))$ is not assigned to any vertex of $N[(u_1,v_2)]$. That is $g(N[(u_1,v_1)]) \neq g(N[(u_1,v_2)])$. 

\medskip
\noindent
\textbf{Case~2:} $v_1=v_2$ and $u_1u_2 \in E(G)$.

 The proof of this case is similar to the proof of Case~1.  \qed

\end{proof}
The bound presented in the above theorem can be improved by merging two distinct color classes to a single color class. 
\begin{corollary}
Let $G$ and $H$ be two connected graphs having at least two vertices such that $\chi(G)=k_1$ and $\chi(H)=k_2$. Then, $\chi_{lid}(G \square H) \leq k_1k_2  -1$. 
\end{corollary}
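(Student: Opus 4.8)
The plan is to start from the coloring $g$ constructed in the proof of Theorem~\ref{thm:generalcart}, which uses the $k_1 k_2$ colors of $[k_1] \times [k_2]$, and to reduce the palette by one. The natural candidate is to merge two color classes that are "far apart" enough that identifying no longer breaks. Fix two colors $a, b \in [k_1]$ with $a \neq b$ (these exist since $k_1 \geq 2$), and consider merging the two color classes $(a, 1)$ and $(b, 2)$ of $g$ into a single new color, leaving all other colors of $[k_1] \times [k_2]$ untouched; call the resulting coloring $g'$, which uses $k_1 k_2 - 1$ colors. (If $k_2 = 1$ cannot occur since both $\chi(G), \chi(H) \geq 2$ when $G, H$ have at least two vertices, so $1 \neq 2$ are legitimate distinct colors of $H$.)

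First I would check that $g'$ is still a proper coloring. Two vertices receiving the merged color under $g'$ are of the form $(u, v)$ with $(f_G(u), f_H(v)) = (a,1)$ and $(u', v')$ with $(f_G(u'), f_H(v')) = (b, 2)$. If they were adjacent in $G \square H$ then either $u = u'$ (impossible, since $f_G(u) = a \neq b = f_G(u')$) or $v = v'$ (impossible, since $f_H(v) = 1 \neq 2 = f_H(v')$); hence no edge is monochromatic and $g'$ is proper.

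Next I would verify the lid-condition. Let $(u_1, v_1)$ and $(u_2, v_2)$ be adjacent; by Lemma~\ref{lem-car-nbhd} their closed neighborhoods differ, so we must show $g'(N[(u_1,v_1)]) \neq g'(N[(u_2,v_2)])$. By symmetry assume we are in Case~1 of the earlier proof: $u_1 = u_2$ and $v_1 v_2 \in E(H)$, and there is a vertex $u_3 \in N(u_1)$ with $(u_3, v_1) \in N[(u_1,v_1)] \setminus N[(u_1, v_2)]$. Under the original $g$, the color $(f_G(u_3), f_H(v_1))$ appeared in $g(N[(u_1,v_1)])$ but not in $g(N[(u_1,v_2)])$. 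The only way merging classes $(a,1)$ and $(b,2)$ can destroy this witness is if, after merging, the merged color does appear in $N[(u_1, v_2)]$ — i.e.\ if $(f_G(u_3), f_H(v_1))$ is one of $(a,1), (b,2)$ and the other of the two also appears in $g(N[(u_1,v_2)])$. I would handle this by choosing $u_3$ (and, if needed, a second choice of the pair $\{a,b\}$ relative to the local colors) to avoid the collision: since $f_H(v_1) \neq f_H(v_2)$, at least one of $v_1, v_2$ has $f_H$-value $\notin \{1, 2\}$ whenever $k_2 \geq 3$; when $k_2 = 2$ one argues directly that the pair $(a,1)$ versus $(b,2)$ cannot simultaneously be realized in a single closed neighborhood of $G \square H$ in the conflicting way. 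The main obstacle is precisely this case analysis — pinning down which closed neighborhoods could contain both merged colors and showing one can always steer the witness color away from the merge — so I would organize it as: (i) reduce to the $k_2 = 2$ subcase after disposing of $k_2 \geq 3$ by a counting/parity argument on $f_H$-values; (ii) in the $k_2 = 2$ subcase, observe that $\{1,2\} = [k_2]$ and use the product structure to show $(a,1)$ and $(b,2)$ cannot both lie in one closed neighborhood, so no witness is ever lost.
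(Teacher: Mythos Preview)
Your overall plan---merge two ``diagonal'' colors $(a,1)$ and $(b,2)$ of $g$ into one---is exactly what the paper does (it merges $(1,1)$ and $(k_1,k_2)$), and your verification that the merged coloring is still proper is correct. So the strategy is right.

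The gap is in your lid-verification, specifically claim (ii). You assert that for $k_2=2$ the colors $(a,1)$ and $(b,2)$ cannot both occur in a single closed neighbourhood of $G\square H$. This is false. Take any vertex $(u,v)$ with $f_G(u)=b$ and $f_H(v)=1$; if $u$ has a neighbour $u'$ with $f_G(u')=a$ and $v$ has a neighbour $v'$ with $f_H(v')=2$, then $(u',v)$ has $g$-color $(a,1)$ and $(u,v')$ has $g$-color $(b,2)$, and both lie in $N[(u,v)]$. So ``no witness is ever lost'' does not follow from the non-co-occurrence you claim, because that non-co-occurrence simply does not hold. Your item (i) for $k_2\ge 3$ is also only a placeholder (``counting/parity argument'') with no actual content.

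The fix is not to argue about whether the merged colors can coexist in a neighbourhood (they can), but to track the witness directly, as the paper does. For an edge with $u_1=u_2$ and $v_1v_2\in E(H)$, the colors in $g(N[(u_1,v_2)])$ are all of the form $(\ast,f_H(v_2))$ or $(f_G(u_1),\ast)$; the witness $(f_G(u_3),f_H(v_1))$ with $u_3\in N_G(u_1)$ matches neither pattern, and one checks it is also not the merged color (or, in the one subcase where it is, one produces an alternative witness). That structural description of $g(N[(u_1,v_2)])$ is the missing ingredient in your sketch.
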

\begin{proof}
    Let $g$ be a lid-coloring of $G \square H$ as defined in Theorem \ref{thm:generalcart}. 
    We define a coloring $f:V(G \square H)\rightarrow ([k_1] \times [k_2] )\setminus (k_1, k_2)$ 
    as follows.

 $$
    f((u,v)) = 
        \begin{cases} 
				g((u,v)) & \text{if $g((u,v)) \neq (k_1,k_2)$;} \\
				(1,1) &\text{if $g((u,v))= (k_1,k_2)$.} \\
				               
			\end{cases}
			$$
    
We show that $f$ is a lid-coloring of $G \square H$. 
  Let $e=(u_1,v_1)(u_2,v_2)$ be an arbitrary edge of $G \square H$. That is, either (a) $u_1=u_2$ and $v_1v_2 \in E(H)$ or (b) $v_1=v_2$ and $u_1u_2 \in E(G)$.

\medskip
\noindent
\textbf{Case~1:} $u_1=u_2$ and $v_1v_2 \in E(H)$.

    Let $e=(u_1,v_1)(u_1,v_2)$ be an arbitrary edge of $G \square H$. If $g((u_1,v_1))$ and $g((u_1,v_2))$ are not equal to $(k_1,k_2)$ then clearly  $f((u_1,v_1)) \neq f((u_1,v_2))$.
    Suppose $g((u_1,v_1))=(k_1,k_2)$ and $g((u_1,v_2))=(k_1,p)$, where $p \neq k_2$. Then $f((u_1,v_1))=(1,1)$ and $f((u_1,v_2))=(k_1,p)$. As $k_1 \neq 1$, 
    $f((u_1,v_1)) \neq f((u_1,v_2))$. 
  
From Lemma~\ref{lem-car-nbhd}, we know that $N[(u_1,v_1)] \neq N[(u_1,v_2)]$.
 If $(k_1,k_2) \notin g(N[(u_1,v_1)]\cup N[(u_1,v_2)])$ then clearly we have $f(N[(u_1,v_1)]) \neq f(N[(u_1,v_2)])$. 
 Suppose, $g((u_1,v_1))=(k_1,k_2)$ and $g((u_1,v_2))=(k_1,p)$, where $p \neq k_2$. Then $f((u_1,v_1))=(1,1)$, $f((u_1,v_2))=(k_1,p)$. As $G$ is connected, there exists vertex $u_3 \in V(G)$ such that $u_1u_3 \in E(G)$. Clearly the vertex $(u_3,v_1)$  is adjacent to  $(u_1,v_1)$ and not adjacent to $(u_1, v_2)$, and  
 $f((u_3,v_1))=(q,k_2)$, where $q \neq k_1$. 
 Notice that the color $(q,k_2) \in f(N[(u_1,v_1)]) \setminus f(N[(u_1,v_2)])$
 as $q \neq k_1$ and $p \neq k_2$.

 Similarly, we can show that $f(N[(u_1,v_1)]) \neq f(N[(u_1,v_2)])$ for the case when  $g((u_1,v_1))$ and $g((u_1,v_2))$ not equal to $(k_1,k_2)$ but $(k_1,k_2) \in g(N[(u_1,v_1)]\cup N[(u_1,v_2)])$.

\medskip
\noindent
\textbf{Case~2:} $v_1=v_2$ and $u_1u_2 \in E(G)$.

 The proof of this case is similar to the proof of Case~1.  \qed

\end{proof}

The bound given in the above corollary is sharp when $G=C_3$ and $H=C_4$ as $\chi_{lid}(C_3 \square C_4)=5$ (see Fig~\ref{C3Cn}), $\chi(C_3)=3$ and $\chi(C_4)=2$.

\subsection{Cartesian product of a cycle and a path}
Esperet et al.~\cite{esperet2010locally} showed that for any two bipartite graphs $G$ and $H$ without isolated vertices, $\chi_{lid}(G \square H) = 3$. As a corollary, we can see that the lid-chromatic number of Cartesian product of two paths is three.  

Taking the work forward, we study lid-coloring of Cartesian product of a path and a cycle, and Cartesian product of two cycles. 
  
\begin{theorem}\label{th-CmPn}
For every pair of positive integers $m$ and $n$, where $m \geq 3$, $n \geq 2$, we have
\[
    \chi_{lid}(C_m \square P_n)= 
\begin{cases}
    5& \text{if $m=3$ and $n \geq 2$;}\\
    4& \text{if $m$ is odd, $m \geq 5$ and $n \geq 2$;}\\
    3& \text{if $m$ is even and $n \geq 2$.}\\
\end{cases}
\]
    
\end{theorem}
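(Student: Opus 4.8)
The plan is to handle the three cases separately, and within each case to prove matching upper and lower bounds. For the upper bounds, I would exhibit explicit colorings of $C_m \square P_n$; for the lower bounds, I would invoke the structural results of the preliminaries (Lemmas \ref{lem-bipartite-triangle}, \ref{lem-obs}, Theorems \ref{lem-bipartite}, \ref{th-bipartite}, \ref{lem-regular}) together with ad hoc arguments about closed neighborhoods. The easy case is $m$ even: then $C_m$ is bipartite and $P_n$ is bipartite, so $C_m \square P_n$ is the Cartesian product of two connected bipartite graphs and Theorem \ref{th-bipartite} gives $\chi_{lid}(C_m \square P_n) = 3$ directly. (One should note $n \geq 2$ so $P_n$ has an edge; the case $n=1$ reduces to $\chi_{lid}(C_m)$, but the statement only claims $n \geq 2$.)

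For $m$ odd, $C_m \square P_n$ is not bipartite (it contains an odd cycle), so by Lemma \ref{lem-bipartite-triangle} its lid-chromatic number is at least $4$, unless it is a triangle — which it is not since $n \geq 2$ forces at least $2m \geq 6$ vertices. This disposes of the lower bound $\geq 4$ for all odd $m$. The subcase $m \geq 5$ then requires an explicit $4$-lid-coloring: I would use a periodic pattern along the cycle direction of period dividing $m$ where possible, and a careful "shift" between consecutive copies $C_m \times \{j\}$ and $C_m \times \{j+1\}$ so that (i) the coloring stays proper and (ii) every pair of adjacent vertices with distinct closed neighborhoods gets distinguished color-sets. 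Since in $C_m \square P_n$ most vertices have degree $4$ and the interior ones all have the same (isomorphic) closed-neighborhood shape, the main thing to verify is that adjacent degree-$4$ vertices see different color multisets; boundary vertices on the two end-cycles have degree $3$ and must also be checked against their degree-$4$ neighbors. This is the step I expect to be the most delicate: getting one pattern that works simultaneously for all odd $m \geq 5$ and all $n \geq 2$, rather than juggling residues of $m$ and $n$ modulo small numbers. I would look for a coloring of the form $g((u_i,v_j)) = \sigma^j(c_i)$ for a fixed cyclic sequence $c_0,\dots,c_{m-1}$ on colors $\{1,2,3\}$ with one extra color $4$ inserted to break the odd-cycle parity, where $\sigma$ is a permutation of the palette; then verifying the lid condition reduces to a finite local check independent of $m$ and $n$.

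For $m=3$, the graph $C_3 \square P_n$ contains triangles, and I claim $\chi_{lid} = 5$. The upper bound $\leq 5$ follows from the Corollary (with $\chi(C_3)=3$, $\chi(P_n)=2$, giving $\leq 3\cdot 2 - 1 = 5$), so the content is the lower bound $\geq 5$. To rule out $4$: in $C_3 \square P_n$, each "prism layer" $C_3 \times \{j\}$ is a triangle, and a proper $4$-coloring restricted to such a triangle uses $3$ of the $4$ colors; a counting/parity argument on how these triples can shift between adjacent layers while keeping all adjacent vertices (which here have distinct closed neighborhoods, by Lemma \ref{lem-car-nbhd}) color-distinguished should force a contradiction. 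Concretely I would argue that a vertex $(u_i,v_j)$ in an interior layer has closed neighborhood consisting of its two triangle-neighbors plus the two vertices directly above and below it, and show that with only $4$ colors two adjacent such vertices are forced to share the same color-set; the small number of layers needed to derive the contradiction ($n \geq 2$ should already suffice, possibly needing $n \geq 3$ with $C_3 \square P_2$ handled by inspection) keeps this finite. Assembling the three cases completes the proof. \qed
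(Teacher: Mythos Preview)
Your overall decomposition matches the paper's: three cases, Theorem~\ref{th-bipartite} for even $m$, Lemma~\ref{lem-bipartite-triangle} for the lower bound when $m$ is odd, and explicit colorings for the upper bounds. Your shortcut of quoting the Corollary for the upper bound $\chi_{lid}(C_3\square P_n)\le 5$ is a pleasant alternative to the paper's explicit figure.

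The one place where your plan is noticeably heavier than the paper is the lower bound $\chi_{lid}(C_3\square P_n)\ge 5$. You sketch a counting/parity argument across layers to rule out $4$ colors; the paper instead looks only at the first two layers. The three vertices of the triangle $X=\{(u_1,v_1),(u_2,v_1),(u_3,v_1)\}$ already use three colors, and for each $i$ one has $N[(u_i,v_1)]=X\cup\{(u_i,v_2)\}$, so $f(N[(u_i,v_1)])=\{1,2,3\}\cup\{f((u_i,v_2))\}$. By Lemma~\ref{lem-car-nbhd} these three closed neighborhoods are pairwise distinct, hence their color-sets must be pairwise distinct, which forces at least two of $f((u_1,v_2)),f((u_2,v_2)),f((u_3,v_2))$ to be new colors outside $\{1,2,3\}$. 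This gives $\ge 5$ immediately for every $n\ge 2$, with no parity analysis and no separate handling of $C_3\square P_2$. Your approach would eventually reach the same conclusion, but this two-layer argument is what you want here.
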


\begin{proof}
We divide the proof into three cases as described below.

\medskip
\noindent
\textbf{Case~1:} When $m=3$ and $n \geq 2$.

Let $G=C_3 \square P_n$, $V(C_3) = \{u_1, u_2,u_3\}$,  $V(P_n) = \{v_1,v_2,\ldots,v_n\}$   and $V(G)=\{(u_1,v_i), (u_2, v_i), (u_3, v_i)~|~ i \in [n]\}$.
A $5$-lid-coloring of $C_3 \square P_n$ is illustrated in Fig~\ref{fig:C3Pn}.
Thus 
$\chi_{lid}(C_3 \square P_n) \leq 5$.

Next, we show that $\chi_{lid}(G) \geq 5$. Let $X=\{(u_1,v_1), (u_2, v_1), (u_3, v_1)\}$. Clearly the graph $G[X]$  induced by vertices of $X$,   
is isomorphic to $C_3$, and hence $\chi_{lid}(G) \geq  3$. From 
Lemma \ref{lem-car-nbhd}, 
every pair of vertices $u,v \in X$ have distinct closed neighborhoods.  
Hence, to maintain distinct set of colors in $N[u]$ and $N[v]$  
at least two new colors 
must be assigned to the vertices of 
$\{(u_1,v_2), (u_2, v_2), (u_3, v_2)\}$. 
Therefore, any lid-coloring of $G$ uses at least five colors. Thus $\chi_{lid}(G) = 5$. 

 \begin{figure}[t] 
 \captionsetup[subfigure]{justification=centering}
	\begin{subfigure}[t]{0.5\textwidth}	
		
		$$
			\includegraphics[trim=0cm 19.5cm 16cm 1.8cm, clip=true, scale=1]{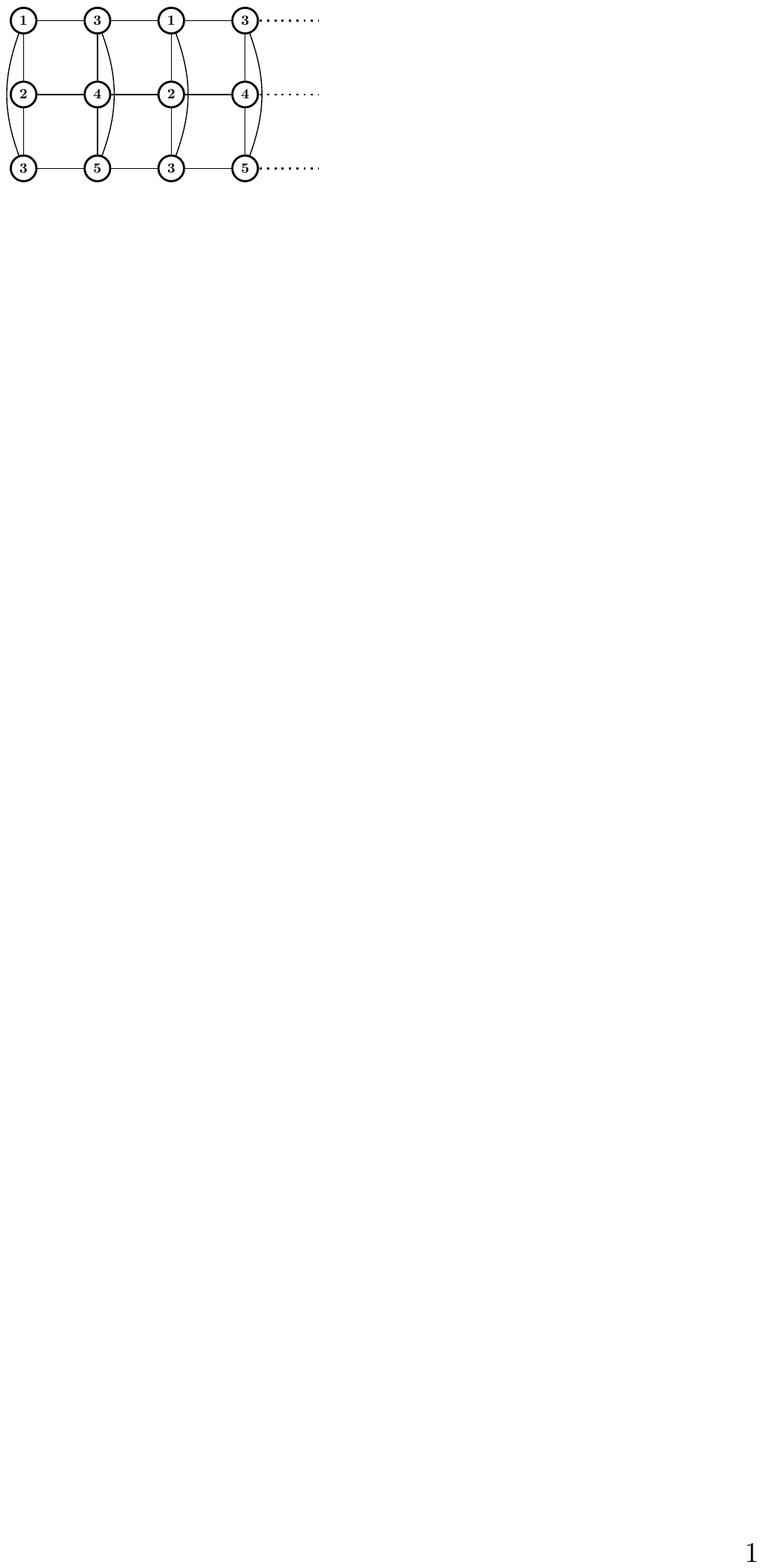}
		$$
	\caption{$C_{3} \square P_n$}
	\label{fig:C3Pn}
	\end{subfigure}
	\begin{subfigure}[t]{0.5\textwidth}
		
		$$
			\includegraphics[trim=7cm 17.5cm 1cm 3.8cm, clip=true, scale=1]{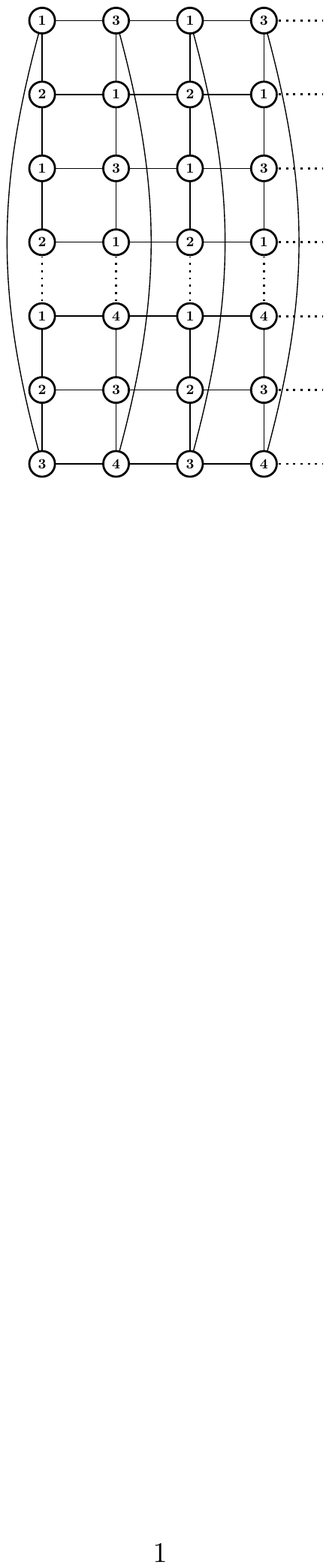}
		$$
	\caption{$C_{m} \square P_n$}
		\label{fig:CmPn}
	\end{subfigure}

	\caption{~ (a) A $5$-lid coloring of $C_3 \square P_n$ for $n \geq 2$,  and (b) A $4$-lid coloring of $C_m \square P_n$, when $m$ is odd, $m \geq 5$ and $n \geq 2$.}
\end{figure}

\medskip
\noindent
\textbf{Case~2:} When $m \geq 5$  is odd and $n\geq 2$.

A $4$-lid coloring of $C_m \square P_n$ is illustrated in  Fig~\ref{fig:CmPn}. Hence, $\chi_{lid}(C_m \square P_n) \leq 4$. 
Suppose $\chi_{lid}(C_m \square P_n)  \leq 3$. 
Then from Lemma~\ref{lem-bipartite-triangle}, 
 $C_m \square P_n$ should be either a triangle or a bipartite graph, which is a contradiction. 
 Hence, $\chi_{lid}(C_m \square P_n) = 4 $.

\medskip
\noindent
\textbf{Case~3:} When $m$ is even and  $n \geq 2$. 

Since $C_m$ and $P_n$ are bipartite, 
from Theorem~\ref{th-bipartite}, 
we get $\chi_{lid}(C_m \square P_n)=3$. \qed

\end{proof}

\subsection{Cartesian product of two cycles}

In this subsection, we study lid-coloring of the Cartesian product of two cycles.

\begin{lemma}~\label{lem-C3_Cn}
 For every positive integer  $n \geq  3$, we have $\chi_{lid}(C_3 \square C_n)=5$.
\end{lemma}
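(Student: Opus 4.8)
The plan is to prove $\chi_{lid}(C_3 \square C_n) = 5$ by establishing matching upper and lower bounds, handling the upper bound by explicit constructions (likely split into residue classes of $n$ modulo a small integer) and the lower bound by a local argument mirroring Case~1 of Theorem~\ref{th-CmPn}.

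For the \textbf{lower bound} $\chi_{lid}(C_3 \square C_n) \geq 5$, I would argue exactly as in the $m=3$ case of Theorem~\ref{th-CmPn}. Pick one of the $n$ triangle-copies, say $X = \{(u_1,v_1),(u_2,v_1),(u_3,v_1)\}$; since $G[X] \cong C_3$ we already need $3$ colors, and by Lemma~\ref{lem-car-nbhd} the three vertices of $X$ have pairwise distinct closed neighborhoods. The closed neighborhoods of any two vertices of $X$ differ only in the two "path-neighbor" columns $v_0$ and $v_2$ (indices mod $n$), so to separate the color sets we are forced to introduce at least two colors outside $\{1,2,3\}$ on the vertices of the neighboring columns; this forces a fifth color. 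One subtlety versus the path case: now there are \emph{two} neighboring columns rather than one, so I should check that the counting argument still yields two fresh colors rather than letting the two columns "share the burden" in a way that needs only one new color — but since each vertex of $X$ sees only one vertex in column $v_0$ and one in column $v_2$, and the three pairs must be mutually separated, the same pigeonhole forces $\geq 5$.

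For the \textbf{upper bound} $\chi_{lid}(C_3 \square C_n) \leq 5$, I would give an explicit $5$-lid-coloring, generalizing Fig.~\ref{fig:C3Pn}. The natural idea is to color column $v_i$ by a pattern depending on $i \bmod q$ for a suitable small $q$ (plausibly $q \in \{2,3,4,5,6\}$), using the colors $\{1,2,3,4,5\}$, so that (i) the coloring is proper — within each column the three colors are distinct, and horizontally adjacent vertices $(u_j,v_i),(u_j,v_{i+1})$ get distinct colors — and (ii) every edge's two endpoints, when they have distinct closed neighborhoods (which by Lemma~\ref{lem-car-nbhd} is \emph{every} edge here), see distinct color sets in their closed neighborhoods. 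Because $C_n$ is $4$-regular inside the product... actually $C_3 \square C_n$ is $4$-regular, and each vertex's closed neighborhood has $5$ vertices, so the color multiset in $N[(u_j,v_i)]$ typically uses $4$ or $5$ of the colors; I would design the column patterns so that adjacent closed neighborhoods always differ. The likely clean approach: if $3 \mid n$, reuse a periodic pattern of period $3$ on columns (each column a rotation of $(1,2,3)$) and verify directly; if $n \not\equiv 0 \pmod 3$, introduce colors $4$ and $5$ in one or two "transition" columns to fix up the wrap-around, exactly as the path construction uses them at the ends.

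The \textbf{main obstacle} I anticipate is the upper bound: finding a single periodic pattern that works for all $n \geq 3$ is unlikely, so the proof will probably need a case analysis on $n \bmod q$, with the small cases ($n = 3,4,5$, and perhaps $6,7$) possibly needing ad hoc colorings exhibited by figures, and the generic case needing a careful but routine verification that no adjacent pair of closed neighborhoods coincides in color set. A secondary worry is making the lower-bound pigeonhole fully rigorous given two neighboring columns instead of one; I would write that out carefully rather than just citing "similar to Case~1". If a uniform construction proves elusive, an alternative is to build the coloring of $C_3 \square C_n$ by concatenating blocks of the $C_3 \square P_k$ colorings and repairing the seam, leveraging the $n \geq 3$ freedom to choose block sizes whose residues cooperate.
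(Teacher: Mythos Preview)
Your plan is essentially the paper's own approach: the paper proves the lower bound by citing Case~1 of Theorem~\ref{th-CmPn} verbatim, and proves the upper bound by exhibiting explicit $5$-lid-colorings split only into the two cases $n$ even and $n$ odd (Fig.~\ref{C3Cn}), so your anticipated case analysis on $n \bmod q$ is simpler than you fear. Your observation about the two neighboring columns in the lower bound is a valid subtlety the paper glosses over, but your pigeonhole sketch handles it correctly (with four colors the sets $S_i \setminus f(X)$ can only be $\emptyset$ or $\{4\}$, too few for three pairwise-distinct closed-neighborhood color sets).
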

\begin{proof}
 
A $5$-lid-coloring of $C_3 \square C_n$ is illustrated in Fig~\ref{C3Cn}. 
 By following the lines of Case~1 of Theorem~\ref{th-CmPn}, we can show that $\chi_{lid}(C_3 \square C_n) \geq 5$. Hence, we have  $\chi_{lid}(C_3 \square C_n) = 5$.     \qed

\end{proof}


\begin{figure}[t] 
 \captionsetup[subfigure]{justification=centering}
	\begin{subfigure}[t]{0.4\textwidth}
		$$
			\includegraphics[trim=8cm 20.5cm 1.5cm 2.5cm, clip=true, scale=0.9]{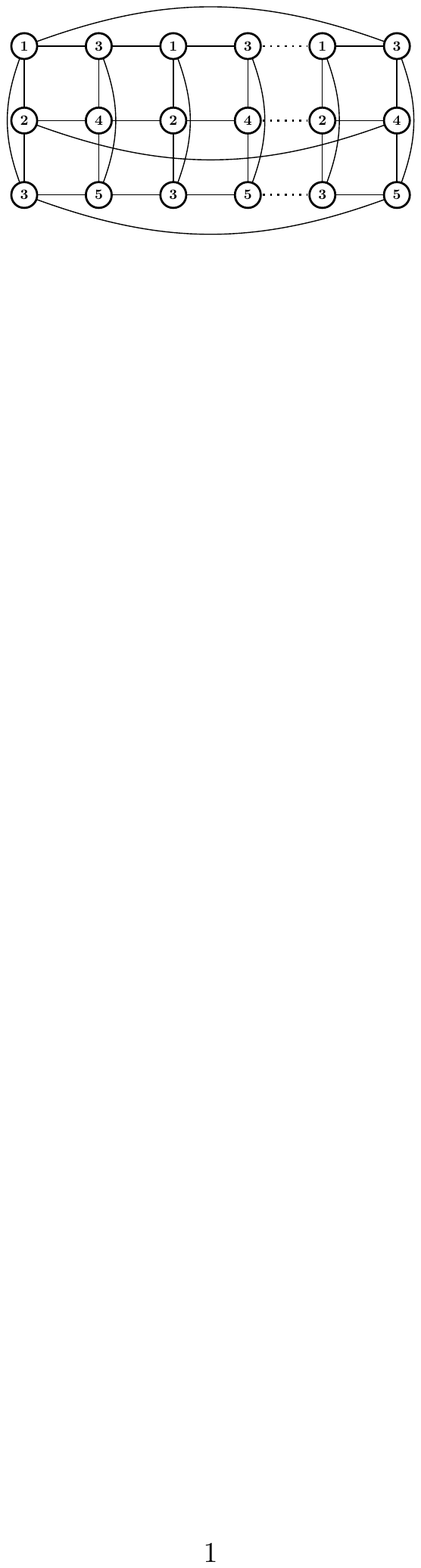}
		$$
	\caption{
 $C_{3} \square C_n$, 
 $n$ is even}	\label{C3Cn(even)}
	\end{subfigure}
	\begin{subfigure}[t]{0.4\textwidth}
	
		$$
			\includegraphics[trim=6.7cm 20.5cm 2cm 2.5cm, clip=true, scale=0.9]{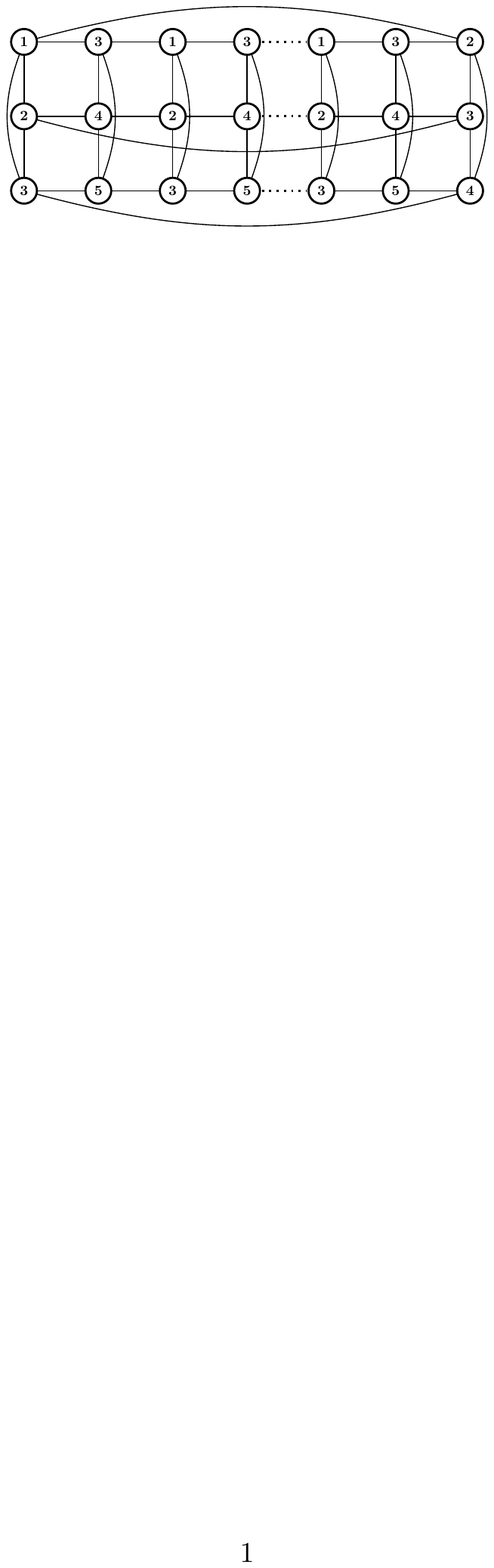}
		$$
	\caption{
 $C_{3} \square C_n$, 
 $n$ is odd}
	\label{C3Cn(odd)}
	\end{subfigure}

	\caption{~(a) A $5$-lid-coloring of $C_3 \square C_n$, when $n$ is even, and (b) A $5$-lid-coloring of $C_3 \square C_n$, when $n$ is odd.}
	\label{C3Cn}
\end{figure}


\begin{lemma}~\label{lem-Ceven_Ceven}
 For every pair of even positive integers $m$ and $n$ such that $3 \leq m \leq n$, we have  $\chi_{lid}(C_m \square C_n)=3$.
\end{lemma}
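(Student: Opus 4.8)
The plan is to recognize this lemma as essentially a direct consequence of Theorem~\ref{th-bipartite}. Since $m$ and $n$ are even, the cycles $C_m$ and $C_n$ are both connected bipartite graphs (an even cycle is bipartite, with colour classes given by alternating vertices). Hence $C_m \square C_n$ is the Cartesian product of two connected bipartite graphs, and Theorem~\ref{th-bipartite} immediately yields $\chi_{lid}(C_m \square C_n) = 3$. So the entire proof consists of checking that the hypotheses of the earlier theorem are met, which is routine.

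If instead one wanted a self-contained argument that does not invoke Theorem~\ref{th-bipartite}, I would proceed as follows. First observe that $C_m \square C_n$ is $4$-regular: each vertex $(u,v)$ has exactly two neighbours in its $C_m$-fibre and two in its $C_n$-fibre. It is also bipartite, since the Cartesian product of two bipartite graphs is bipartite (a proper $2$-colouring is obtained from the parities of a $C_m$-colour and a $C_n$-colour). By Theorem~\ref{lem-regular}, a $4$-regular bipartite graph is $3$-lid-colorable, so $\chi_{lid}(C_m \square C_n) \leq 3$. For the lower bound, $C_m \square C_n$ has $mn \geq 16$ vertices, so by Lemma~\ref{lem-obs} it is not $2$-lid-colorable, giving $\chi_{lid}(C_m \square C_n) \geq 3$. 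Combining the two bounds yields equality.

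There is no genuine obstacle here; the only points requiring (trivial) verification are the connectedness of $C_m$ and $C_n$ and the bipartiteness of even cycles. The content of the lemma lies in its being the ``easy'' regime among the cases of $\chi_{lid}(C_m \square C_n)$, in contrast with the $C_3 \square C_n$ case handled in Lemma~\ref{lem-C3_Cn} and the remaining odd cases, where bipartiteness fails and the upper-bound constructions and lower-bound arguments are more delicate.
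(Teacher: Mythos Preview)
Your proposal is correct and matches the paper's own proof exactly: both simply observe that even cycles are connected bipartite graphs and invoke Theorem~\ref{th-bipartite} to conclude $\chi_{lid}(C_m \square C_n)=3$. The alternative self-contained argument you outline (via Theorem~\ref{lem-regular} and Lemma~\ref{lem-obs}) is also valid but unnecessary here.
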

\begin{proof}
 
The proof follows from Theorem~\ref{th-bipartite} as both $C_m$ and $C_n$ are bipartite. \qed

\end{proof}


\begin{lemma}\label{lem-atleast3}
 If at least one of $m$ and $n$ is odd, then 
 $\chi_{lid}(C_m \square C_n) \geq 4$.  
\end{lemma}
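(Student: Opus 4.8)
The plan is to show that if, say, $m$ is odd, then $C_m \square C_n$ cannot be $3$-lid-colored, which by properness (it needs at least $\chi(C_m \square C_n) \geq 3$ anyway) forces $\chi_{lid} \geq 4$. The cleanest route is via Lemma~\ref{lem-bipartite-triangle}: a connected graph with $\chi_{lid} \leq 3$ is either a triangle or bipartite. Since $C_m \square C_n$ has at least $3 \cdot 3 = 9$ vertices, it is not a triangle; so it suffices to show it is not bipartite. This is standard: $C_m \square C_n$ contains $C_m$ as a subgraph (a ``row''), and an odd cycle is not bipartite, hence $C_m \square C_n$ has an odd closed walk and is not bipartite. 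Therefore $\chi_{lid}(C_m \square C_n) > 3$, i.e.\ $\chi_{lid}(C_m \square C_n) \geq 4$.

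More explicitly, I would first fix notation: write $V(C_m) = \{u_0, \dots, u_{m-1}\}$ and $V(C_n) = \{v_0, \dots, v_{n-1}\}$, and observe that for a fixed $j$, the set $\{(u_0, v_j), (u_1, v_j), \dots, (u_{m-1}, v_j)\}$ induces a copy of $C_m$ in $C_m \square C_n$ by the definition of the Cartesian product. If $m$ is odd this is an odd cycle, so $C_m \square C_n$ is not bipartite; the case where $n$ is odd is symmetric (use a ``column'' copy of $C_n$). Then invoke Lemma~\ref{lem-bipartite-triangle} in contrapositive form: since $C_m \square C_n$ is connected, is not a triangle (it has $mn \geq 9$ vertices), and is not bipartite, we cannot have $\chi_{lid}(C_m \square C_n) \leq 3$.

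I do not anticipate a real obstacle here — the only thing to be slightly careful about is the logical direction of Lemma~\ref{lem-bipartite-triangle}, which gives a necessary condition for $\chi_{lid} \leq 3$ rather than a sufficient one, so one should phrase the argument as: ``if $\chi_{lid} \leq 3$ then bipartite-or-triangle; but neither holds; contradiction.'' One should also note the lower bound $\chi_{lid} \geq \chi \geq 2$ is never in question, and that the statement implicitly assumes $m, n \geq 3$ so that both factors are genuine cycles and the product is connected with at least $9$ vertices. The whole argument is a few lines; the substance is entirely in the citation to Lemma~\ref{lem-bipartite-triangle} and the trivial observation that an odd cycle embeds as a layer of the product.
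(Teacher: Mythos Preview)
Your proof is correct and follows exactly the paper's approach: assume $\chi_{lid}(C_m \square C_n) \leq 3$, invoke Lemma~\ref{lem-bipartite-triangle}, and derive a contradiction from the fact that $C_m \square C_n$ is neither a triangle nor bipartite. In fact you supply more detail than the paper does, since the paper simply asserts that $C_m \square C_n$ is neither a triangle nor bipartite without spelling out the odd-cycle-layer and vertex-count arguments.
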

\begin{proof}
Suppose that $\chi_{lid}(C_m \square C_n) \leq 3$. Then 
from Lemma~\ref{lem-bipartite-triangle}, 
$C_m \square C_n$ is either a triangle or a bipartite graph, which is a contradiction
to the fact that $C_m \square C_n$ is neither a triangle nor bipartite. 
Thus, $\chi_{lid}(C_m \square C_n) \geq 4 $.  
\qed
\end{proof}


\begin{figure}[t] 
 \captionsetup[subfigure]{justification=centering}
	\begin{subfigure}[t]{0.4\textwidth}
		$$
			\includegraphics[trim=5cm 17cm 1cm 3.8cm, clip=true, scale=1]{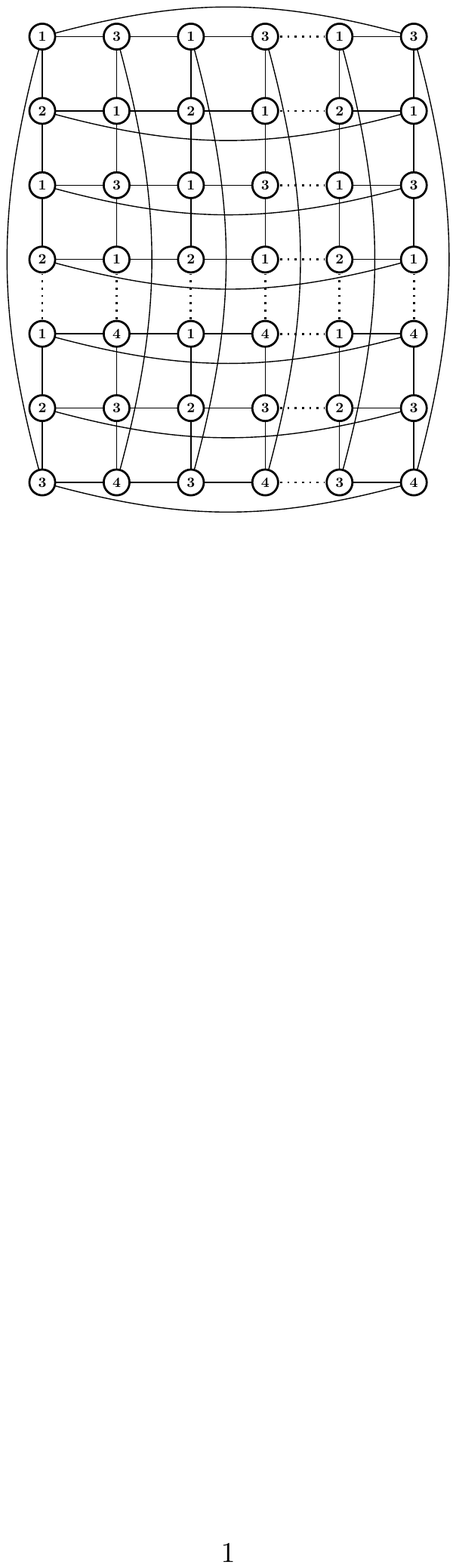}
		$$

	\end{subfigure}

	\caption{~A $4$-lid-coloring of $C_m \square C_n$, where $m (\geq 5)$ is odd and $n$ is even. }
	\label{fig:Codd_Ceven}
\end{figure}


\begin{lemma}~\label{lem-Ceven_Codd}
 Let $m\geq 5$ be an odd integer and $n\geq 4$ be an even integer.  
 Then $\chi_{lid}(C_m \square C_n)=4$.
\end{lemma}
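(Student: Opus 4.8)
The plan is to establish the two matching bounds $\chi_{lid}(C_m \square C_n) \ge 4$ and $\chi_{lid}(C_m \square C_n) \le 4$ for $m\ge 5$ odd and $n\ge 4$ even. The lower bound is immediate: since $m$ is odd, $C_m \square C_n$ is neither a triangle nor bipartite, so Lemma~\ref{lem-atleast3} gives $\chi_{lid}(C_m \square C_n) \ge 4$ directly. All the work is therefore in exhibiting an explicit $4$-lid-coloring, which is what Figure~\ref{fig:Codd_Ceven} is meant to illustrate.

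For the upper bound, I would describe a periodic coloring of the $m \times n$ torus grid. Index the vertices as $(u_i, v_j)$ with $i \in \mathbb{Z}_m$, $j \in \mathbb{Z}_n$. Since $n$ is even, the column direction is bipartite and can absorb a $2$-periodic pattern; since $m$ is odd, the row direction is where the obstruction to bipartiteness lives, so the coloring must ``break'' exactly once along each cycle $C_m$. Concretely I would take the coloring to be, on most of the grid, the product-type pattern $g((u_i,v_j)) = (f_{C_m}(u_i), f_{C_n}(v_j))$ suitably flattened to $4$ colors (this is essentially the $C_m \square P_n$ coloring of Figure~\ref{fig:CmPn} with $m$ odd), and then modify the coloring on the ``seam'' where the path $P_n$ is closed up into $C_n$. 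Because $n$ is even, the two ends of $P_n$ receive compatible colors under the $C_m \square P_n$ scheme, so closing the cycle does not create a properness conflict; one then checks that the lid-condition still holds across the seam, possibly after a small local recoloring of the seam layer. I would present the coloring as an explicit formula (a case split on $i \bmod 2$ and $j \bmod 2$, with a correction when $i = m-1$), and verify (i) properness and (ii) that adjacent vertices have distinct color-sets in their closed neighborhoods.

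The verification of the lid-condition is the routine-but-delicate part. By Lemma~\ref{lem-car-nbhd}, every edge of $C_m \square C_n$ joins vertices with distinct closed neighborhoods, so for every edge $(u_1,v_1)(u_2,v_2)$ I must produce a color appearing in exactly one of the two closed neighborhoods. Away from the seam this follows exactly as in Theorem~\ref{th-CmPn}/Case~2: a private neighbor in the ``other'' coordinate direction carries a color the opposite closed neighborhood cannot see. Near the seam (rows $i \in \{m-2, m-1, 0\}$ or columns $j \in \{n-1, 0\}$) the local pattern is slightly different, so I would handle these $O(1)$-many edge-types by direct inspection, using the picture as a guide. The main obstacle I anticipate is choosing the seam correction so that it simultaneously (a) keeps the coloring proper where the cycle closes in the odd direction and (b) does not destroy the lid-property on the handful of edges whose closed neighborhoods straddle the seam — in other words, making sure the single unavoidable ``defect'' forced by $m$ being odd is placed where the even direction gives enough slack to repair it. Once the seam is fixed, everything else is a mechanical check, and combining with the lower bound yields $\chi_{lid}(C_m \square C_n) = 4$. \qed
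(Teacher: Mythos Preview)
Your proposal is correct and matches the paper's approach: the lower bound is exactly Lemma~\ref{lem-atleast3}, and the upper bound is obtained by exhibiting an explicit $4$-lid-coloring (the paper simply points to Fig.~\ref{fig:Codd_Ceven} without further discussion). Your idea of taking the $4$-lid-coloring of $C_m \square P_n$ from Fig.~\ref{fig:CmPn} and closing up the even $P_n$ into $C_n$ is precisely the construction that figure encodes, so the ``seam'' analysis you sketch is just the written-out version of what the picture shows.
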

\begin{proof}

From Lemma~\ref{lem-atleast3} we know that $\chi_{lid}(C_m \square C_n) \geq 4 $. A $4$-lid-coloring of $C_m \square C_n$ is shown in Fig \ref{fig:Codd_Ceven}.  Therefore, we get $\chi_{lid}(C_m \square C_n) = 4$.
\qed
\end{proof}

In the rest of this section,  we show that  $\chi_{lid}(C_m \square C_n) = 4$ when both $m$ and $n$ are odd positive integers greater than or equal to five. 
The following result of Sylvester plays a main role in our proofs. 
\begin{lemma}[\cite{sylvester1882subvariants}]\label{lem-main3}
Let $m$ and $n$ be two positive integers that are relatively prime. Then for every integer $k\geq(n-1)(m-1)$, there exist non-negative integers $\alpha$ and $\beta$ such that $k = \alpha n + \beta m$.
\end{lemma}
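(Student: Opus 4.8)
The plan is to prove this classical Sylvester--Frobenius fact directly, using the coprimality of $m$ and $n$ to reduce everything to a single congruence. Since $\gcd(m,n)=1$, the $m$ numbers $0\cdot n,\,1\cdot n,\,\dots,\,(m-1)n$ are pairwise incongruent modulo $m$, so they form a complete system of residues mod $m$. Hence, for a fixed integer $k$, there is a \emph{unique} $\alpha\in\{0,1,\dots,m-1\}$ with $\alpha n\equiv k\pmod{m}$; equivalently $k-\alpha n$ is an integer multiple of $m$, say $k-\alpha n=\beta m$ with $\beta\in\mathbb{Z}$. So a representation $k=\alpha n+\beta m$ always exists over the integers, and the only thing to establish is the sign condition $\beta\ge 0$.

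So it remains to check that $\beta\ge 0$ whenever $k\ge (n-1)(m-1)=mn-m-n+1$. I would argue by contradiction: suppose $\beta<0$, i.e. $\alpha n>k$. Since $\alpha n$ and $k$ are congruent modulo $m$ and $\alpha n>k$, we in fact have $\alpha n\ge k+m$. Combining this with the a priori bound $\alpha n\le (m-1)n$ (which holds because $\alpha\le m-1$) gives $k\le (m-1)n-m=mn-m-n$, contradicting the hypothesis $k\ge mn-m-n+1$. Therefore $\beta\ge 0$, and $k=\alpha n+\beta m$ exhibits $k$ with nonnegative $\alpha,\beta$, as claimed.

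I do not expect a real obstacle here; the one point that needs a little care is squeezing out the \emph{sharp} threshold $(n-1)(m-1)$ rather than the weaker bound $(m-1)n$ that a naive "subtract the residue class" argument would give. That sharpening is precisely what the step ``$\alpha n\equiv k\pmod m$ together with $\alpha n>k$ forces $\alpha n\ge k+m$'' provides, once it is paired with $\alpha n\le (m-1)n$. (The same reasoning, if one wanted it, also shows that $mn-m-n$ itself is not representable, so the threshold in the lemma is best possible; this is not needed for the later proofs, so I would omit it.)
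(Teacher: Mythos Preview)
Your argument is correct and is the standard proof of this classical Sylvester--Frobenius result: pick the unique $\alpha\in\{0,\dots,m-1\}$ with $\alpha n\equiv k\pmod m$, write $k-\alpha n=\beta m$, and use the congruence together with $\alpha\le m-1$ to force $\beta\ge 0$ once $k\ge (n-1)(m-1)$. The sharpening step you flag (that $\alpha n>k$ and $\alpha n\equiv k\pmod m$ imply $\alpha n\ge k+m$) is exactly what yields the optimal threshold.

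There is nothing to compare against: the paper does not prove Lemma~\ref{lem-main3} at all, it merely quotes it from Sylvester~\cite{sylvester1882subvariants} and uses it as a black box in the proofs of Lemmas~\ref{lem:mn12} and~\ref{lem-C5Cn}. Your self-contained proof is a strict addition.
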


\begin{lemma}\label{lem:mn12}
 For every pair of odd positive integers $m$ and $n$, where $12 \leq m \leq n$, we have
 $\chi_{lid}(C_m \square C_n)= 4$.
\end{lemma}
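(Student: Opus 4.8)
The plan is to exhibit an explicit $4$-lid-coloring of $C_m \square C_n$ for all odd $m,n$ with $12 \le m \le n$; the lower bound $\chi_{lid}(C_m \square C_n) \ge 4$ is already supplied by Lemma~\ref{lem-atleast3}. I would think of $C_m \square C_n$ as a toroidal grid with rows indexed by $\mathbb{Z}_m$ and columns indexed by $\mathbb{Z}_n$, and try to build the coloring column-by-column. The first step is to set up a small library of ``column patterns'': proper $2$-colorings of a path-like configuration on $m$ vertices that are then closed up into the cycle $C_m$. Since $m$ is odd, no single $\{1,2\}$-pattern closes up properly around the cycle, so each column must contain at least one ``defect'' where a third or fourth color is used; the idea is to stack columns so that consecutive columns differ enough to keep the coloring proper across horizontal edges while the defects are placed so as to separate closed neighborhoods of adjacent vertices.

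The key technical device is Lemma~\ref{lem-main3} (Sylvester): because $12 \le m \le n$ and, after possibly reducing to the case $\gcd(m,n)=1$ or handling the common-divisor case separately, one can write $n$ (or $n$ minus a small constant) as $\alpha m' + \beta n'$ for suitable relatively prime block lengths coming from two or three ``tile'' types of widths that individually admit good $4$-lid-colorings of $C_m \square P_{(\text{width})}$ with prescribed boundary columns. Concretely, I would design two gadget strips — say of widths $w_1$ and $w_2$ with $\gcd(w_1,w_2)=1$ and both at least, say, $4$ or $5$ — each of which is a $4$-coloring of $C_m \square P_{w_i}$ that is proper, is lid-correct in its interior, and whose leftmost and rightmost columns are a fixed pair of compatible column-patterns. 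Then for $n \ge (w_1-1)(w_2-1)$ Sylvester lets me concatenate $\alpha$ copies of the first strip and $\beta$ copies of the second around the $C_n$ direction, and because every strip has the same boundary columns, the seams glue up into a valid toroidal coloring. The condition $m \ge 12$ is exactly what is needed to make $m$ itself large enough to host the internal defect structure of each column pattern (there must be room to place the extra colors without creating a pair of adjacent vertices with equal color-sets), and likely also to push $n \ge (w_1-1)(w_2-1)$ given the smallest usable widths.

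After the construction, the remaining work is verification, which I would organize as: (i) properness of the coloring — check horizontal edges (between consecutive columns, within a strip and at each seam) and vertical edges (within a column pattern and its wrap-around); (ii) the lid-condition — by Lemma~\ref{lem-car-nbhd} every edge of a Cartesian product joins vertices with distinct closed neighborhoods, so for each edge $(u_1,v_1)(u_2,v_2)$ I must produce a color appearing in one closed neighborhood but not the other, which reduces to a finite case check over the (boundedly many) local pictures that occur, namely interior-of-strip edges and the few seam edges. The main obstacle I anticipate is the second part: ensuring that no two adjacent vertices accidentally acquire the same set of four colors in their closed neighborhoods, especially at the seams and near the column defects, since on a $4$-regular toroidal grid a closed neighborhood has five vertices and only four colors, so color-sets are "almost full" and collisions are easy to create — getting the defect placement and the choice of $w_1,w_2$ right so that every local configuration is lid-distinguished is the delicate heart of the argument. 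I would also keep in mind the edge case $\gcd(m,n)>1$, where Sylvester does not apply directly to $(m,n)$; there one instead applies it to the block-width pair $(w_1,w_2)$, so this is not actually an obstruction as long as the gadget widths are chosen coprime.
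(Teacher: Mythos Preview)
Your high-level strategy---Sylvester plus tiling for the upper bound, Lemma~\ref{lem-atleast3} for the lower bound---matches the paper's, but your decomposition is one-dimensional where the paper's is two-dimensional. You propose to fix $m$, build two strips $C_m \square P_{w_1}$ and $C_m \square P_{w_2}$ with compatible boundary columns, and wrap copies of them around the $C_n$ direction; this forces the column patterns (and hence the strips) to depend on the arbitrary odd $m$, and leaves you with exactly the scalability and seam-verification issues you flag as the ``delicate heart of the argument.'' The paper instead applies Sylvester (with the pair $(4,5)$, for which the Frobenius bound is $(4-1)(5-1)=12$) to \emph{both} coordinates: since every integer $\ge 12$ is a non-negative combination of $4$ and $5$, both $m$ and $n$ decompose, and the torus is tiled by a two-dimensional grid of blocks drawn from $\{C_4\square C_4,\ C_4\square C_5,\ C_5\square C_4,\ C_5\square C_5\}$. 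All that is then required is to exhibit four explicit finite $4$-lid-colorings whose boundary rows and columns are chosen to match up, reducing the entire construction and verification to a fixed finite check (carried out pictorially in the paper). So your approach is not wrong in principle, but the paper's two-dimensional tiling is a genuine simplification that replaces your $m$-parameterized family of gadgets by four constant-size ones and makes the role of the threshold $12$ transparent.
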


\begin{proof}
 From Lemma~\ref{lem-main3}, every positive integer $k \geq 12$ can be expressed as a linear combination of $4$ and $5$. We give $4$-lid-colorings of $C_4 \square C_4$, 
 $C_4 \square C_5$, 
 $C_5 \square C_4$, and 
 $C_5 \square C_5$ in  Fig~\ref{fig-base} such that
 \begin{itemize}
     \item the colors of the first and last columns of $C_4 \square C_4$ and $C_4 \square C_5$ are the same, 
     \item the colors of the first two columns of $C_5 \square C_4$ and 
 $C_5 \square C_5$ are the same, 
 \item the colors of the first two rows of $C_4 \square C_4$ and $C_5 \square C_4$ are the same, and 
 \item the colors of the first two rows of $C_4 \square C_5$ and $C_5 \square C_5$ are the same. 
 \end{itemize}
 
 Therefore by selecting suitable copies of colorings of $C_4 \square C_4$, $C_4 \square C_5$, $C_5 \square C_4$ and $C_5 \square C_5$, 
 we can obtain $4$-lid-coloring 
 of $C_m \square C_n$. From Lemma \ref{lem-atleast3}, we have $\chi_{lid}(C_m \square C_n) \geq 4$. Altogether we have $\chi_{lid}(C_m \square C_n) = 4$.
 For example, a $4$-lid coloring of $C_{13} \square C_{17}$ can be obtained by using suitable copies of colorings of $C_4 \square C_4$, $C_4 \square C_5$, $C_5 \square C_4$ and $C_5 \square C_5$ as shown in Fig~\ref{fig:C13_C17}. \qed 
\end{proof}

\begin{lemma}\label{lem-C5Cn}
    For every odd positive integer $n \geq 5$, we have
    $\chi_{lid}(C_5 \square C_n) = 4$.
       
   \end{lemma}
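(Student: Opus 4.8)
The plan is to prove the lower bound from the general results already available and to obtain the upper bound by a block-gluing construction, reducing the whole statement to two explicit small cases. Since $n$ is odd, $C_5 \square C_n$ is neither a triangle nor bipartite, so Lemma~\ref{lem-atleast3} already gives $\chi_{lid}(C_5 \square C_n) \geq 4$; it therefore remains to construct a $4$-lid-coloring of $C_5 \square C_n$ for every odd $n \geq 5$.

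For the upper bound I would reuse the $4$-lid-colorings of $C_5 \square C_4$ and $C_5 \square C_5$ fixed in the proof of Lemma~\ref{lem:mn12}, whose boundary columns are designed to agree. Because of this agreement, copies of these two patches can be concatenated along the second cycle: at every seam each closed neighborhood coincides with one that already occurs inside a base patch, so both properness and the equality-of-colour-sets condition are inherited, and the result is a $4$-lid-coloring of $C_5 \square C_n$ whenever $n = 4\alpha + 5\beta$ for non-negative integers $\alpha, \beta$. By Lemma~\ref{lem-main3} every integer $n \geq 12$ admits such a representation, and so do $n = 5$ and $n = 9 = 4 + 5$; since $n$ is odd, this settles every case except $n = 7$ and $n = 11$, which are precisely the odd integers at least $5$ lying outside the numerical semigroup generated by $4$ and $5$.

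For these two values I would give explicit $4$-lid-colorings of $C_5 \square C_7$ and $C_5 \square C_{11}$ drawn as $5 \times 7$ and $5 \times 11$ toroidal grids. The verification is routine: one checks that the coloring is proper and that for every edge $(u,v)$ the sets of colors appearing in $N[u]$ and $N[v]$ differ, and by Lemma~\ref{lem-car-nbhd} the closed neighborhoods $N[u]$ and $N[v]$ are always distinct, so this is exactly the lid-condition. A small shortcut is to arrange the $C_5 \square C_7$ coloring so that its boundary columns match those of the $C_5 \square C_4$ and $C_5 \square C_5$ blocks; then $C_5 \square C_{11}$ can be obtained by gluing the $C_5 \square C_7$ block to a single $C_5 \square C_4$ block, so that only $n = 7$ needs a genuinely new pattern. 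Combining the lower and upper bounds gives $\chi_{lid}(C_5 \square C_n) = 4$.

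The step I expect to be the main obstacle is the explicit construction for $C_5 \square C_7$ (and, if not bootstrapped as above, $C_5 \square C_{11}$): since $\chi_{lid}(C_5) = 5$ by Lemma~\ref{lem-cycle}, the drop to four colors in the product cannot come from any naive periodic pattern, so a correct coloring has to be found somewhat carefully, and then the lid-condition must be checked on all of its edges as well as at every seam created by the gluing. Everything else is bookkeeping built on top of Lemma~\ref{lem:mn12} and Lemma~\ref{lem-main3}.
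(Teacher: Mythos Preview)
Your proposal is correct and follows essentially the same approach as the paper: the lower bound comes from Lemma~\ref{lem-atleast3}, the upper bound for $n \geq 12$ from gluing the $C_5 \square C_4$ and $C_5 \square C_5$ blocks via Lemma~\ref{lem-main3}, and the remaining small odd cases are handled by explicit colorings. The only minor difference is that the paper provides explicit figures for all of $n \in \{7,9,11\}$ rather than observing, as you do, that $9 = 4+5$ already lies in the numerical semigroup and is therefore covered by the gluing construction.
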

\begin{proof}
     From Lemma \ref{lem-main3}, we know that every positive integer $k \geq 12$ can be expressed as a linear combination of 4 and 5. As the first two columns of $C_5 \square C_4$ and $C_5 \square C_5$ are identical (see Fig.~\ref{C5_C4}, \ref{C5_C5}), we can use suitable copies of colorings of $C_5 \square C_4$ and $C_5 \square C_5$ to get a 4-lid-coloring of $C_5 \square C_n$ when $n \geq 12$. For $n \in \{7,9,11\}$, we have given 4-lid-colorings of $C_5 \square C_n$ in  Fig.~\ref{fig-C5}. Also from Lemma \ref{lem-atleast3}, we have  $\chi_{lid}(C_5 \square C_n) \geq 4$. Altogether we have $\chi_{lid}(C_5 \square C_n) = 4$.   \qed
\end{proof}
   
\begin{lemma}\label{lem-C7911}
     For every odd positive integers $m$ and $n$, where $m\in \{7,9,11\}$ and $n \geq m$, we have
    $\chi_{lid}(C_m \square C_n) = 4$.
\end{lemma}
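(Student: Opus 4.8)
\medskip
\noindent\textbf{Proof strategy.} The plan is to follow the template of Lemma~\ref{lem-C5Cn} essentially verbatim, treating $m\in\{7,9,11\}$ as the fixed ``small'' dimension and decomposing the second dimension $n$ via Sylvester's lemma. The lower bound is immediate: since $m$ is odd, Lemma~\ref{lem-atleast3} gives $\chi_{lid}(C_m \square C_n) \geq 4$, so the whole task reduces to producing $4$-lid-colorings.

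Fix $m\in\{7,9,11\}$. Because $4$ and $5$ are relatively prime with $(5-1)(4-1)=12$, Lemma~\ref{lem-main3} shows that every integer $n\geq 12$ can be written as $n=4\alpha+5\beta$ with $\alpha,\beta$ non-negative (and, since $n$ is odd, $\beta\geq 1$). First I would exhibit explicit $4$-lid-colorings of the two ``blocks'' $C_m\square C_4$ and $C_m\square C_5$ whose first two columns receive identical colorings, in the spirit of Fig.~\ref{C5_C4} and Fig.~\ref{C5_C5} for $m=5$. Given such blocks, for odd $n\geq 12$ one colors $C_m\square C_n$ by laying down around its $n$ columns $\alpha$ consecutive copies of the column pattern of $C_m\square C_4$ and $\beta$ consecutive copies of the column pattern of $C_m\square C_5$; the two matching columns are exactly what is needed so that, near each seam, every closed neighbourhood has the same colour set as a closed neighbourhood occurring in one of the base colorings, and hence properness and the closed-neighbourhood colour-set condition are inherited from the base colorings (with only a few additional inequalities relating the two base colorings to be checked at the seams). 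This yields a $4$-lid-coloring of $C_m\square C_n$ for every odd $n\geq 12$.

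It then remains to settle the finitely many pairs $(m,n)$ with $m\leq n$, both odd, and $7\leq n\leq 11$, namely $(7,7),(7,9),(7,11),(9,9),(9,11)$ and $(11,11)$. For each of these I would give an explicit $4$-lid-coloring, displayed in figures along the lines of Fig.~\ref{fig-C5}, and verify by inspection---up to the rotational symmetries of $C_m\square C_n$, and using that every edge joins vertices with distinct closed neighbourhoods by Lemma~\ref{lem-car-nbhd}---that the coloring is proper and that the two endpoints of each edge see distinct colour sets in their closed neighbourhoods. Combining this with the block-gluing argument for $n\geq 12$ and the lower bound from Lemma~\ref{lem-atleast3} gives $\chi_{lid}(C_m\square C_n)=4$ for all $m\in\{7,9,11\}$ and all odd $n\geq m$.

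The main obstacle is purely constructive and combinatorial, not conceptual: one must design the six base blocks (and the six small colorings) so that all the demands hold at once---properness, the lid condition on every internal edge, the two-column seam-compatibility between the width-$4$ and width-$5$ blocks, and the cross-inequalities between the two base colorings that surface when a width-$4$ block abuts a width-$5$ block in either cyclic order. The bookkeeping is somewhat more cramped for $m=11$ (where $C_{11}\square C_5$ is the only gateway to cases such as $C_{11}\square C_{13}$), but it is structurally identical to the $m=5$ construction already carried out in Lemma~\ref{lem-C5Cn}, so no genuinely new idea is required.
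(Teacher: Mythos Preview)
Your proposal is correct and follows essentially the same route as the paper: the authors likewise reduce to Lemma~\ref{lem-C5Cn}'s template, providing for each $m\in\{7,9,11\}$ the two base blocks $C_m\square C_4$ and $C_m\square C_5$ with matching first two columns (Figs.~\ref{fig-C7}, \ref{C9CN}, \ref{fig-C11}) to handle all odd $n\geq 12$ via Sylvester, and explicit $4$-lid-colorings for the remaining small odd $n$ with $m\leq n\leq 11$ (Figs.~\ref{fig-C7}, \ref{C9CN}, \ref{fig-C9}, \ref{fig-C11}). The lower bound is exactly Lemma~\ref{lem-atleast3}, as you say.
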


\begin{proof}
    The proof of Lemma \ref{lem-C7911} is similar to the proof of Lemma \ref{lem-C5Cn}.   \qed
\end{proof}

\begin{figure}[H] 
 \captionsetup[subfigure]{justification=centering}
	\begin{subfigure}[t]{0.4\textwidth}
	
		$$
			\includegraphics[trim=7.7cm 20cm 1cm 3.9cm, clip=true, scale=0.9]{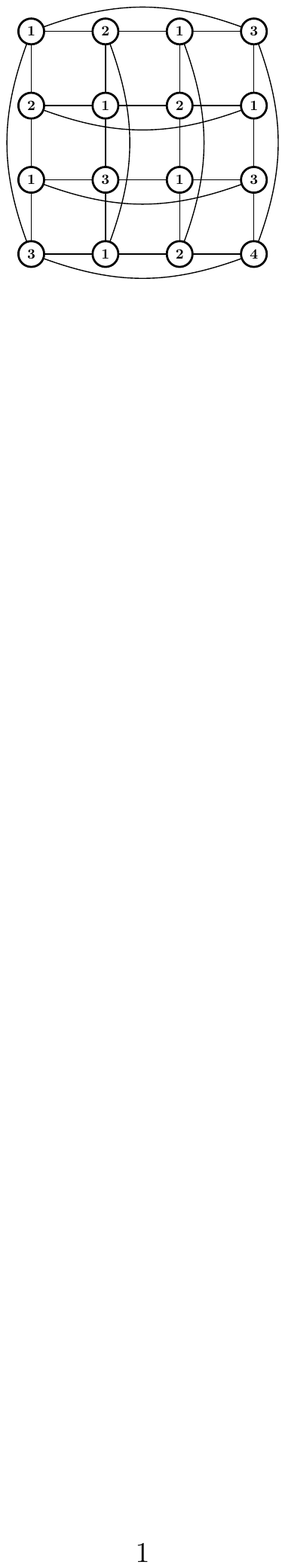}
		$$
	\caption{$C_{4} \square C_4$}
	\label{C4_C4}
	\end{subfigure}
	\begin{subfigure}[t]{0.55\textwidth}
		$$
			\includegraphics[trim=7cm 20cm 1cm 3.9cm, clip=true, scale=0.9]{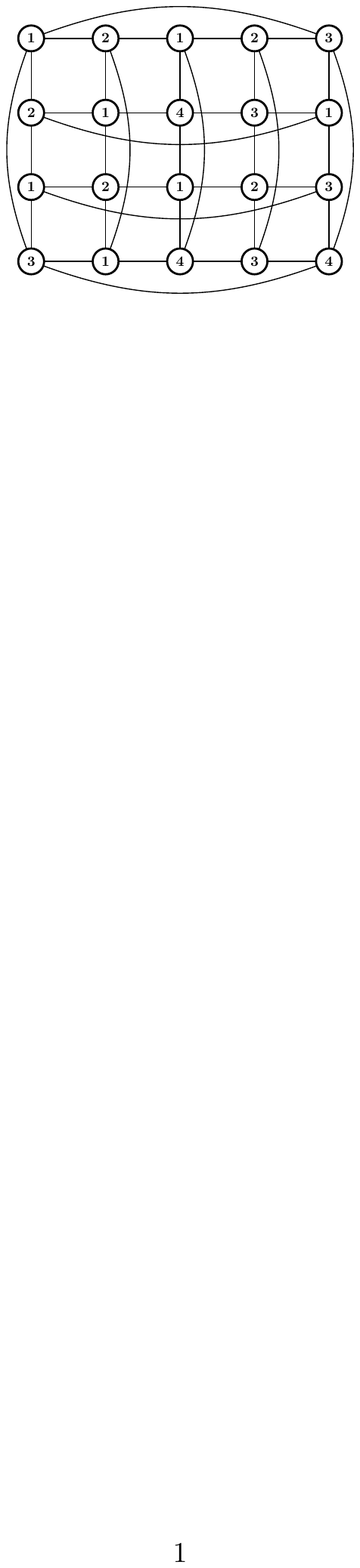}
		$$
	\caption{$C_{4} \square C_5$}
	\label{C4_C5)}
	\end{subfigure}
	
	\vspace{-0.5cm}
	\begin{subfigure}[t]{0.4\textwidth}
	
		$$
			\includegraphics[trim=7.5cm 19cm 1cm 3cm, clip=true, scale=0.9]{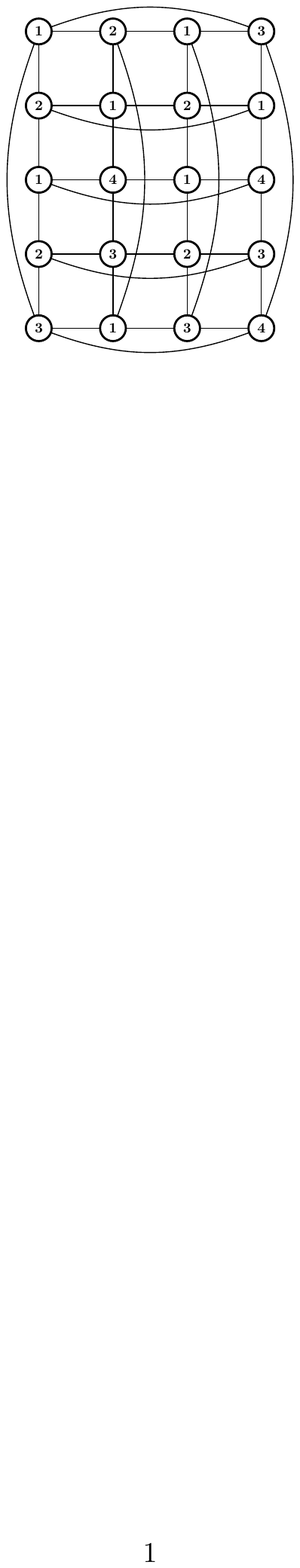}
		$$
	\caption{$C_{5} \square C_4$}
	\label{C5_C4}
	\end{subfigure}
	\begin{subfigure}[t]{0.55\textwidth}
		$$
			\includegraphics[trim=7.2cm 19cm 1cm 3cm, clip=true, scale=0.9]{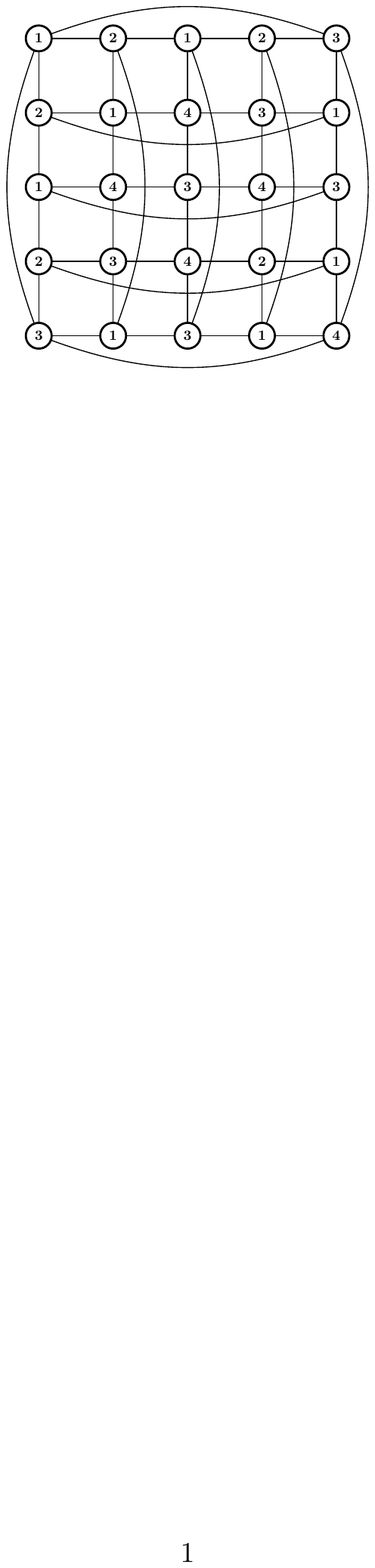}
		$$
	\caption{$C_{5} \square C_5$}
	\label{C5_C5}
	\end{subfigure}

	\caption{~ $4$-lid-colorings of (a)~$C_4 \square C_4$, (b)~$C_4 \square C_5$, (c)~$C_5 \square C_4$ and (d)~$C_5 \square C_5$.}
	\label{fig-base}
\end{figure}

\vspace{-1.5cm}
\begin{figure}[h] 
 \captionsetup[subfigure]{justification=centering}
	\begin{subfigure}[t]{0.4\textwidth}
	
		$$
			\includegraphics[trim=2cm 16.5cm 1cm 4cm, clip=true, scale=0.75]{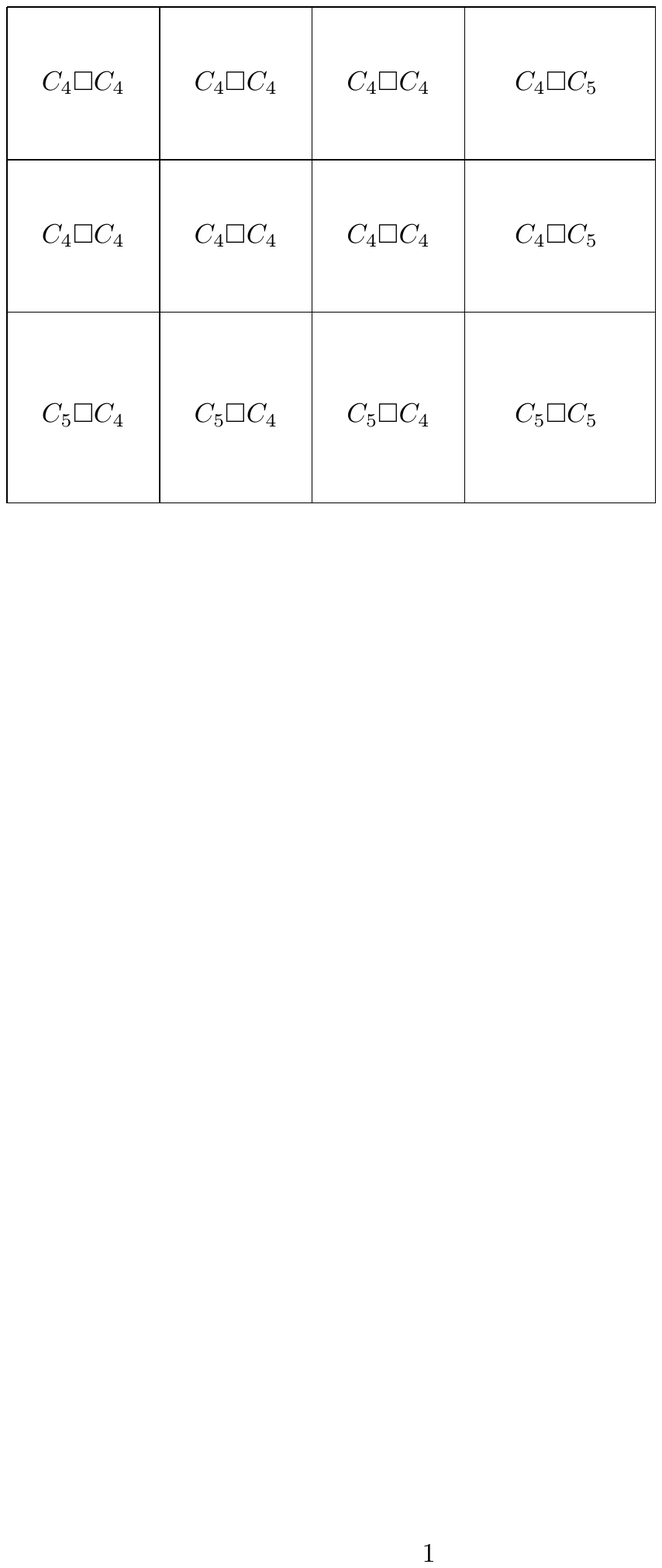}
		$$
	\end{subfigure}

	\caption{~A $4$-lid-coloring of $C_{13} \square C_{17}$ obtained by using suitable copies of colorings $C_4 \square C_4$, $C_4 \square C_5$, $C_5 \square C_4$ and $C_5 \square C_5$.}
	\label{fig:C13_C17}
\end{figure}


\begin{theorem}~\label{th-CmCn}
Let $m$ and $n$ be two positive integers  such that $3 \leq m \leq n$. Then we have
\[
    \chi_{lid}(C_m \square C_n)= 
\begin{cases}
    5& \text{$m=3$ and $n \geq 3$;}\\
    3& \text{$m=2p$ and $n=2q$ for some $p,q \in \mathbb{N}$;}\\
   
    4& \text{otherwise. }\\

\end{cases}
\]

\end{theorem}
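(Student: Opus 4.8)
The plan is to assemble Theorem~\ref{th-CmCn} by case analysis that simply collects the lemmas already proved in this subsection. The statement has three cases, and each is a direct consequence of earlier results, so the ``proof'' is really a bookkeeping argument checking that the lemmas cover all of $3 \le m \le n$.

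\textbf{Case $m = 3$.} For every $n \ge 3$ we have $\chi_{lid}(C_3 \square C_n) = 5$ by Lemma~\ref{lem-C3_Cn}. This handles the first branch directly.

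\textbf{Case $m, n$ both even.} Here $3 \le m \le n$ forces $m \ge 4$, and Lemma~\ref{lem-Ceven_Ceven} (which itself invokes Theorem~\ref{th-bipartite} since $C_m$ and $C_n$ are bipartite) gives $\chi_{lid}(C_m \square C_n) = 3$.

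\textbf{Remaining case: at least one of $m, n$ is odd, and $m \ne 3$.} I would first note that Lemma~\ref{lem-atleast3} already supplies the lower bound $\chi_{lid}(C_m \square C_n) \ge 4$ whenever at least one of $m,n$ is odd, so in this whole branch it remains only to exhibit a $4$-lid-coloring. Split into sub-cases by parity of the two arguments. If exactly one is odd: since $m \le n$ and $m \ne 3$, either $m \ge 5$ is odd and $n$ is even, which is Lemma~\ref{lem-Ceven_Codd}; or $m$ is even (so $m \ge 4$) and $n \ge 5$ is odd --- this second configuration is not stated as a separate lemma in the excerpt, so the one genuine gap to fill is a $4$-lid-coloring of $C_m \square C_n$ with $m$ even and $n$ odd. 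I would obtain it the same way Lemma~\ref{lem-Ceven_Codd} is proved: present an explicit coloring (by transposing/adapting the pattern of Fig.~\ref{fig:Codd_Ceven}, since $C_m \square C_n \cong C_n \square C_m$, this actually reduces to the already-drawn case $m$ even, $n$ odd read with rows and columns swapped). If both $m$ and $n$ are odd, then by $m \le m \le n$ and $m \ne 3$ we have $m \ge 5$; when $m \in \{5,7,9,11\}$ this is Lemma~\ref{lem-C5Cn} together with Lemma~\ref{lem-C7911}, and when $m \ge 12$ (hence $n \ge m \ge 12$) it is Lemma~\ref{lem:mn12}. Combining the lower bound from Lemma~\ref{lem-atleast3} with these upper bounds yields $\chi_{lid}(C_m \square C_n) = 4$ throughout the remaining case.

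The main ``obstacle'' is not really a difficulty but a completeness check: one must verify that the listed sub-cases (Lemmas~\ref{lem-C3_Cn}, \ref{lem-Ceven_Ceven}, \ref{lem-Ceven_Codd}, \ref{lem-C5Cn}, \ref{lem-C7911}, \ref{lem:mn12}) together with the symmetry $C_m \square C_n \cong C_n \square C_m$ truly exhaust all pairs $3 \le m \le n$, and in particular that the configuration ``$m$ even, $n$ odd'' is covered (it is, via symmetry, by Lemma~\ref{lem-Ceven_Codd}). Once that is confirmed, the theorem follows by concatenating the lemma statements. I would therefore write the proof as: ``This follows by combining Lemmas~\ref{lem-C3_Cn}, \ref{lem-Ceven_Ceven}, \ref{lem-Ceven_Codd}, \ref{lem-C5Cn}, \ref{lem-C7911} and \ref{lem:mn12}, using that $C_m \square C_n \cong C_n \square C_m$ to reduce the case $m$ even and $n$ odd to the case $m$ odd and $n$ even,'' with a one-line remark spelling out the parity bookkeeping.
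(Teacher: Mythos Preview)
Your proposal is correct and matches the paper's own proof, which simply cites Lemmas~\ref{lem-C3_Cn}, \ref{lem-Ceven_Ceven}, \ref{lem-Ceven_Codd}, \ref{lem:mn12}, \ref{lem-C5Cn} and \ref{lem-C7911} without further comment. Your explicit completeness check and the observation that the ``$m$ even, $n$ odd'' sub-case is handled via the isomorphism $C_m \square C_n \cong C_n \square C_m$ (reducing it to Lemma~\ref{lem-Ceven_Codd}) is a useful clarification that the paper leaves implicit.
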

\begin{proof}
 The proof of the theorem follows from 
 the Lemmas~\ref{lem-C3_Cn}, 
 \ref{lem-Ceven_Ceven}, 
 \ref{lem-Ceven_Codd}, \ref{lem:mn12}, \ref{lem-C5Cn} and \ref{lem-C7911}.  \qed
\end{proof}





\section{Tensor product}\label{sec:tensor}
In this section, we give an upper bound  on lid-chromatic number of tensor product of two arbitrary graphs. Next, we give lid-chromatic number of $P_m \times P_n$, $C_m \times P_n$ and $C_m \times C_n$.

\subsection{Tensor product of two arbitrary graphs}
Let $G$ and $H$ be two graphs having at least two vertices. If both $G$ and $H$ have exactly two vertices then $G \times H$ contains four vertices and we can find  $\chi_{lid}(G \times H)$ trivially. Therefore, in this section we assume that at least one of $G$ or $H$ contains at least three vertices. 

\begin{lemma}\label{lem-tensor-nbhd}
 Let $G$ and $H$ be two connected graphs such that either $G$ or $H$ has at least three vertices.   If $(u_1,v_1)$ and $(u_2,v_2)$ are two adjacent vertices in $G \times H$, then we have $N[(u_1,v_1)] \neq N[(u_2,v_2)]$.
\end{lemma}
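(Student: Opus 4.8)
The plan is to argue by contradiction, exploiting the structure of closed neighbourhoods in a tensor product. First I would recall that in $G \times H$ a vertex $(a,b)$ is adjacent to $(u,v)$ precisely when $a \in N_G(u)$ and $b \in N_H(v)$; since $G$ and $H$ are simple, $u \notin N_G(u)$ and $v \notin N_H(v)$, so $(a,b) \in N[(u,v)]$ if and only if $(a,b)=(u,v)$, or both $a \in N_G(u)$ and $b \in N_H(v)$. Because the tensor product is commutative up to isomorphism ($G \times H \cong H \times G$ via $(u,v)\mapsto(v,u)$, which preserves adjacency and hence closed neighbourhoods), I may assume without loss of generality that $G$ has at least three vertices. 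For adjacent $(u_1,v_1)$ and $(u_2,v_2)$ we automatically have $u_1u_2 \in E(G)$ and $v_1v_2 \in E(H)$, and in particular $u_1 \neq u_2$ and $v_1 \neq v_2$.

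The key step is a ``slice'' comparison. Assume $N[(u_1,v_1)] = N[(u_2,v_2)]$ and restrict attention to the vertices whose second coordinate (the $H$-component) equals $v_1$. On one hand, since $v_1 \notin N_H(v_1)$, the only vertex of $N[(u_1,v_1)]$ with second coordinate $v_1$ is $(u_1,v_1)$ itself. On the other hand, a vertex $(a,v_1)$ lies in $N[(u_2,v_2)]$ if and only if it equals $(u_2,v_2)$ --- impossible because $v_1 \neq v_2$ --- or $a \in N_G(u_2)$ and $v_1 \in N_H(v_2)$, the latter holding because $v_1v_2 \in E(H)$. Hence the vertices of $N[(u_2,v_2)]$ with second coordinate $v_1$ are exactly $N_G(u_2) \times \{v_1\}$, a nonempty set (it contains $(u_1,v_1)$). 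Equality of the two closed neighbourhoods therefore forces $N_G(u_2) = \{u_1\}$. Running the symmetric argument with the second coordinate fixed to $v_2$ yields $N_G(u_1) = \{u_2\}$.

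To conclude: if $N_G(u_1) = \{u_2\}$ and $N_G(u_2) = \{u_1\}$, then $\{u_1,u_2\}$ is a connected component of $G$, so connectivity of $G$ forces $V(G) = \{u_1,u_2\}$, contradicting $|V(G)| \geq 3$. Hence $N[(u_1,v_1)] \neq N[(u_2,v_2)]$.

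The step I expect to be the real obstacle is recognising that the slice comparison is the right move. The naive analogue of the Cartesian-product argument in Lemma~\ref{lem-car-nbhd} --- pick a neighbour $u_3$ of $u_1$ and look at $(u_3,v_1)$ or $(u_3,v_2)$ --- breaks down exactly when $u_1$ (or $u_2$) is a pendant vertex or when $u_1,u_2$ are twins in $G$, which would force an awkward case analysis over the local structure of $G$ and $H$ (including the degenerate subcase $H=K_2$). Fixing the $H$-coordinate and counting the vertices of each closed neighbourhood lying in that slice handles all of these uniformly; the only other point requiring care is justifying the commutativity reduction so that ``$G$ has at least three vertices'' may be assumed throughout.
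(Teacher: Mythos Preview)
Your argument is correct. The paper, however, takes precisely the direct route you dismissed in your final paragraph, and it works without any real case analysis. Assuming (as the paper does) that $H$ has at least three vertices, connectivity forces at least one of $v_1,v_2$ to have degree $\ge 2$ in $H$; say $v_2$ has a neighbour $v_3\neq v_1$. Then $(u_1,v_3)\in N[(u_2,v_2)]$ because $u_1u_2\in E(G)$ and $v_3v_2\in E(H)$, while $(u_1,v_3)\notin N[(u_1,v_1)]$ since it shares its first coordinate with $(u_1,v_1)$ and $v_3\neq v_1$. That is the entire proof. The ``awkward case analysis'' you anticipated collapses to the single observation that two adjacent pendant vertices in a connected graph would already form the whole graph, so one of the two endpoints must have an extra neighbour. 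Your slice argument reaches the same degree constraint ($N_G(u_1)=\{u_2\}$ and $N_G(u_2)=\{u_1\}$) by contradiction and is perfectly valid; it makes the obstruction explicit and is arguably more systematic, at the cost of a few more lines than the paper's two-sentence construction of a distinguishing vertex.
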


\begin{proof}
Without loss generality, we assume that $H$ has at least three vertices. 
Let $(u_1,v_1)$ and $(u_2,v_2)$ be two adjacent vertices of 
$G \times H$. We know that $u_1u_2 \in E(G)$ and $v_1v_2 \in E(H)$.
As $H$ is connected and $|V(H)| \geq 3$, we have that  degree of either $v_1$ or $v_2$ is at least two. Without loss of generality assume that degree of $v_2$ is at least two and $\{v_1,v_3\} \subseteq N(v_2)$. Then
it is easy to see that $(u_1,v_3)(u_2,v_2)\in E(G\times H)$ and $(u_1,v_3)(u_1,v_1)\notin E(G\times H)$. That is $(u_1,v_3) \in N[(u_2,v_2)]$ and  $(u_1,v_3) \notin N[(u_1,v_1)]$.  \qed
\end{proof}

We call an edge $e=uv$ of $G \times H$ as \emph{bad} with respect to a coloring $g$ if $N[u] \neq N[v]$ but $g(N[u]) = g(N[v])$, otherwise $e$ is called \emph{good}. 

Let $\chi(G)=k_1$ and $\chi(H)=k_2$.
Let $f_G: V(G) \rightarrow [k_1]$ and $f_H: V(H) \rightarrow [k_2]$ are proper colorings of $G$ and  $H$ respectively. Define  a coloring $g:V(G\times H)\rightarrow [k_1]\times [k_2]$ such that 
for each $(u,v)\in V(G \times H)$,  $g((u,v))=(f_G(u),f_H(v))$.

\begin{lemma}\label{lem-tensor-two}
    Let  $e=(u_1,v_1)(u_2,v_2)$ be an edge in $G \times H$ and $g$ be a coloring of $G \times H$ as defined above.  If $e$ is bad with respect to $g$ then
    $g(N[(u_1,v_1)])=g(N[(u_2,v_2)])=\{g((u_1,v_1)),g((u_2,v_2))\}=\{(f_G(u_1),f_H(v_1)),(f_G(u_2),f_H(v_2))\}$.
\end{lemma}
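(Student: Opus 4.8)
The plan is to analyze the closed neighborhoods of $(u_1,v_1)$ and $(u_2,v_2)$ under the product coloring $g$ and show that when the edge is bad, both neighborhoods must collapse to exactly the two-element set consisting of the colors of the endpoints. Since $e$ is an edge of $G \times H$, we have $u_1u_2 \in E(G)$ and $v_1v_2 \in E(H)$, so $f_G(u_1) \neq f_G(u_2)$ and $f_H(v_1) \neq f_H(v_2)$; in particular $g((u_1,v_1)) = (f_G(u_1),f_H(v_1))$ and $g((u_2,v_2)) = (f_G(u_2),f_H(v_2))$ differ in \emph{both} coordinates, so the set $\{g((u_1,v_1)), g((u_2,v_2))\}$ genuinely has two elements. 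The badness hypothesis already gives $g(N[(u_1,v_1)]) = g(N[(u_2,v_2)])$; it remains to pin down this common color set.

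First I would observe the crucial structural fact about neighbors in the tensor product: if $(a,b) \in N((u_1,v_1))$, then $au_1 \in E(G)$ and $bv_1 \in E(H)$, hence $f_G(a) \neq f_G(u_1)$ and $f_H(b) \neq f_H(v_1)$, so the color $g((a,b))$ agrees with $g((u_1,v_1))$ in \emph{neither} coordinate. Symmetrically, every color appearing on an open neighbor of $(u_2,v_2)$ disagrees with $g((u_2,v_2)) = (f_G(u_2), f_H(v_2))$ in both coordinates. Now take the common color set $S := g(N[(u_1,v_1)]) = g(N[(u_2,v_2)])$. The color $g((u_1,v_1))$ lies in $S$ (it is the color of $(u_1,v_1) \in N[(u_1,v_1)]$), so it must also be realized by some vertex of $N[(u_2,v_2)]$; but on open neighbors of $(u_2,v_2)$ no color can have first coordinate $f_G(u_2)$, whereas $g((u_1,v_1))$ has first coordinate $f_G(u_1) \neq f_G(u_2)$ — wait, that is consistent — so I instead argue: the only vertex of $N[(u_2,v_2)]$ whose color could equal $(f_G(u_1),f_H(v_1))$ must have its color differing from $(f_G(u_2),f_H(v_2))$ appropriately, and since $(f_G(u_1),f_H(v_1))$ differs from $(f_G(u_2),f_H(v_2))$ in both coordinates it is a legitimate candidate color for a neighbor of $(u_2,v_2)$. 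To force collapse, the cleaner route is: symmetrically, $g((u_2,v_2)) \in S$, so it is realized in $N[(u_1,v_1)]$; again $(f_G(u_2),f_H(v_2))$ differs from $(f_G(u_1),f_H(v_1))$ in both coordinates, so this too is allowed. Thus $\{g((u_1,v_1)), g((u_2,v_2))\} \subseteq S$, and I must now show no \emph{third} color can appear.

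Suppose toward a contradiction that some vertex $x \in N[(u_1,v_1)]$ has $g(x) \notin \{g((u_1,v_1)), g((u_2,v_2))\}$. Then $x \neq (u_1,v_1)$, so $x = (a,b)$ with $au_1 \in E(G)$, $bv_1 \in E(H)$; in particular $f_G(a) \neq f_G(u_1)$ and $f_H(b) \neq f_H(v_1)$. Since $g(x) \in S = g(N[(u_2,v_2)])$, some $y \in N[(u_2,v_2)]$ has $g(y) = g(x) = (f_G(a), f_H(b))$. As $(f_G(a),f_H(b)) \neq g((u_2,v_2))$, we get $y \neq (u_2,v_2)$, so $y = (c,d)$ with $cu_2 \in E(G)$, $dv_2 \in E(H)$, giving $f_G(c) \neq f_G(u_2)$ and $f_H(d) \neq f_H(v_2)$, while $f_G(c) = f_G(a)$ and $f_H(d) = f_H(b)$. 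Combining, $f_G(a) \notin \{f_G(u_1), f_G(u_2)\}$ and $f_H(b) \notin \{f_H(v_1), f_H(v_2)\}$. The main obstacle is extracting a contradiction from this: the idea is that the color $g((u_2,v_2))$, which lies in $S = g(N[(u_1,v_1)])$, must be realized by a vertex of $N[(u_1,v_1)]$, and the only such vertex can be $(u_1,v_1)$ itself only if $g((u_1,v_1)) = g((u_2,v_2))$, which is false; so $g((u_2,v_2))$ is realized by an \emph{open} neighbor $(a',b')$ of $(u_1,v_1)$, forcing $f_G(u_2) = f_G(a') \neq f_G(u_1)$ (fine) and $f_H(v_2) = f_H(b') \neq f_H(v_1)$ (fine) — still no contradiction directly. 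At this point the right finish is a counting argument on the color \emph{multiset} structure: since $g$ restricted to any closed neighborhood takes each value a controlled number of times and $N[(u_1,v_1)]$, $N[(u_2,v_2)]$ have the required color sets equal while the endpoints' own colors are distinct and forced into $S$, one shows $|S| = 2$ by noting every element of $S \setminus \{g((u_1,v_1))\}$ forces, via the realization in $N[(u_2,v_2)]$, a neighbor of $(u_2,v_2)$ avoiding $f_G(u_2)$ in coordinate one, and symmetrically every element of $S \setminus \{g((u_2,v_2))\}$ forces a neighbor of $(u_1,v_1)$ avoiding $f_G(u_1)$; the two-coordinate disagreement constraints then leave $\{g((u_1,v_1)), g((u_2,v_2))\}$ as the only possibility, which is exactly the claimed equality. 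I expect the bookkeeping of these coordinate-disagreement constraints — and making precise why a hypothetical third color cannot be simultaneously consistent with membership in both $g(N[(u_1,v_1)])$ and $g(N[(u_2,v_2)])$ — to be the delicate part; everything else is a direct unwinding of the tensor-product adjacency and the definition of the product coloring $g$.
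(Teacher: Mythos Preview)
Your argument stalls exactly where you sense it does, and the vague ``counting argument'' you gesture at in the last paragraph does not close the gap. From the existence of a third colour $(f_G(a),f_H(b))$ in $S$ you correctly extract $f_G(a)\notin\{f_G(u_1),f_G(u_2)\}$ and $f_H(b)\notin\{f_H(v_1),f_H(v_2)\}$, but those constraints alone do not yield a contradiction: whenever $k_1,k_2\ge 3$ there are plenty of colours satisfying them, and nothing you have written prevents such a colour from appearing simultaneously on a neighbour of $(u_1,v_1)$ and a neighbour of $(u_2,v_2)$. So the ``two-coordinate disagreement constraints'' do \emph{not} leave $\{g((u_1,v_1)),g((u_2,v_2))\}$ as the only possibility.

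The idea you are missing, and which the paper uses, is to manufacture a \emph{new} vertex of $N[(u_1,v_1)]$ whose colour is forced to lie outside $g(N[(u_2,v_2)])$. Since $u_1u_2\in E(G)$ and $v_1v_2\in E(H)$, once you have a neighbour $(a,b)$ of $(u_1,v_1)$ you automatically also have $(u_2,b)\in N((u_1,v_1))$ and $(a,v_2)\in N((u_1,v_1))$. Now use only the disjunction $f_G(a)\ne f_G(u_2)$ or $f_H(b)\ne f_H(v_2)$ (which follows directly from $g((a,b))\ne g((u_2,v_2))$). In the second case the colour of $(u_2,b)$ is $(f_G(u_2),f_H(b))$; its first coordinate equals $f_G(u_2)$, so within $N[(u_2,v_2)]$ only $(u_2,v_2)$ itself could carry it, yet $f_H(b)\ne f_H(v_2)$ rules that out --- hence $(f_G(u_2),f_H(b))\in g(N[(u_1,v_1)])\setminus g(N[(u_2,v_2)])$, contradicting badness. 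The first case is symmetric via $(a,v_2)$. In short, tracking the third colour $(f_G(a),f_H(b))$ into the other neighbourhood, as you do, is not enough; the trick is to mix one coordinate from the third vertex with one coordinate from $(u_2,v_2)$ to produce a colour that is structurally barred from $N[(u_2,v_2)]$.
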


\begin{proof}
    We know from Lemma~\ref{lem-tensor-nbhd} that $N[(u_1,v_1)])\neq N[(u_2,v_2)])$. 
    Since $e$  is bad we have $g(N[(u_1,v_1)])=g(N[(u_2,v_2)])$. 
    Clearly, $\{g((u_1,v_1)),g((u_2,v_2))\} \subseteq g(N[(u_1,v_1)])$ and $\{g((u_1,v_1)),g((u_2,v_2))\} \subseteq g(N[(u_2,v_2)])$. Suppose there exists a vertex $(u,v) \in N[(u_1,v_1)]$ such that $g((u,v))$ is different from both $g((u_1,v_1))$ and $g((u_2,v_2))$.  That is (a) $f_G(u_1) \neq f_G(u)$ and $f_H(v_1) \neq f_H(v)$,  and 
    (b) $f_G(u_2) \neq f_G(u)$ or $f_H(v_2) \neq f_H(v)$.

    It is easy to see that if $(u,v) \in N[(u_1,v_1)]$ then $(u_2,v), (u,v_2) \in N[(u_1,v_1)]$. If  $f_H(v_2) \neq f_H(v)$, then $(f_G(u_2), f_H(v)) \notin g(N[(u_2,v_2)])$ and if $f_G(u_2) \neq f_G(u)$  then $(f_G(u), f_H(v_2)) \notin g(N[(u_2,v_2)])$. In both the cases we get a 
    contradiction to the fact that edge $e$ is  bad  with respect to the coloring $g$. Therefore, we have $g(N[(u_1,v_1)])=g(N[(u_2,v_2)])=\{g((u_1,v_1)),g((u_2,v_2))\}$. 
  \qed  

\end{proof}

\begin{theorem}\label{th-tensor-gen}
 For any two connected graphs $G$ and $H$ such that either $G$ or $H$ has at least three vertices,  $\chi_{lid}(G \times H) \leq \chi(G)  \chi(H)$.   
\end{theorem}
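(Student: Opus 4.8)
The plan is to mimic the proof of Theorem~\ref{thm:generalcart} but account for the fact that the product coloring $g$ may now have \emph{bad} edges, and then repair them. Concretely, start with proper colorings $f_G : V(G) \to [k_1]$ and $f_H : V(H) \to [k_2]$ and set $g((u,v)) = (f_G(u), f_H(v))$. This is a proper coloring of $G \times H$ since any edge $(u_1,v_1)(u_2,v_2)$ has $u_1u_2 \in E(G)$, so $f_G(u_1) \neq f_G(u_2)$. By Lemma~\ref{lem-tensor-nbhd}, every edge joins vertices with distinct closed neighborhoods, so the only obstruction to $g$ being a lid-coloring is the presence of bad edges. By Lemma~\ref{lem-tensor-two}, a bad edge $e = (u_1,v_1)(u_2,v_2)$ is highly constrained: the closed neighborhoods of its endpoints use exactly the two colors $(f_G(u_1),f_H(v_1))$ and $(f_G(u_2),f_H(v_2))$, which in particular forces $f_G(u_1) \neq f_G(u_2)$ and $f_H(v_1) \neq f_H(v_2)$ and forces every neighbor of each endpoint to receive one of these two colors.

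The repair strategy I would use is to recolor along bad edges using one fresh color --- but the statement only allows $k_1 k_2$ colors total, so instead I would argue that bad edges form a very restricted structure (e.g. a matching, or at least vertex-disjoint in a strong sense) and that we can locally swap colors within the palette $[k_1] \times [k_2]$ to kill all bad edges simultaneously without creating new ones. First I would establish that the endpoints of a bad edge each have all their neighbors colored from a 2-element set; combined with properness this says the "bad" parts of the graph look locally like bipartite pieces between two color classes. Then I would show that if $e = (u_1,v_1)(u_2,v_2)$ is bad, we can recolor one endpoint, say $(u_1, v_1)$, to a color of the form $(f_G(u_1), c)$ or $(c, f_H(v_1))$ for a suitable $c$ already in the palette, chosen so that it differs from every neighbor's color and so that after the change $g(N[(u_1,v_1)])$ acquires a color not in $g(N[(u_2,v_2)])$. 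The analysis of Lemma~\ref{lem-tensor-two} (each endpoint of a bad edge sees only two colors, yet $k_1 k_2 \geq 6$ colors exist) gives room to find such a $c$.

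The main obstacle --- and the step I expect to require the most care --- is ensuring that recoloring to fix one bad edge does not break properness at, or create a new bad edge incident to, a neighboring vertex, especially when several bad edges share a vertex or lie close together. To handle this I would first prove a structural lemma bounding how bad edges can interact: using Lemma~\ref{lem-tensor-two}, if two bad edges share an endpoint $w$, then $N[w]$ uses only two colors and this pins down the colors on a whole neighborhood, which should force the bad edges at $w$ to be few and symmetric, ideally letting us conclude the set of bad edges is a matching or can be partitioned into independent "clusters" that are recolored independently. Once the independence of clusters is in hand, the per-cluster recoloring is the routine part: pick an endpoint in each bad edge, change its $G$-coordinate (using $k_1 \geq 2$, or the $H$-coordinate using $k_2 \geq 2$) to introduce a color absent from the other endpoint's closed neighborhood while keeping properness, exploiting that the endpoint and all its neighbors together use at most a constant number of colors out of $k_1 k_2$. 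Assembling these pieces yields a lid-coloring of $G \times H$ with at most $\chi(G)\chi(H)$ colors, proving the theorem.
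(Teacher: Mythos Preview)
Your setup matches the paper exactly: define $g((u,v)) = (f_G(u), f_H(v))$, observe it is proper, and invoke Lemma~\ref{lem-tensor-two} to say that every bad edge $e=(u_1,v_1)(u_2,v_2)$ has both endpoints' closed neighborhoods using only the two colors $g((u_1,v_1))$ and $g((u_2,v_2))$. The divergence is in the repair.

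The structural lemma you are banking on is false: bad edges need not form a matching. Take $G=K_2$ on $\{a,b\}$ with colors $1,2$ and $H=P_3$ on $\{x,y,z\}$ with colors $1,2,1$. Then $(a,x)(b,y)$ and $(a,z)(b,y)$ are both bad with respect to $g$ and share the vertex $(b,y)$. So ``recolor one endpoint per bad edge, process independently'' does not go through as stated, and the vaguer ``independent clusters'' formulation still leaves you with the problem of recoloring inside a cluster where bad edges overlap.

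The paper sidesteps this entirely with a different concrete move. For a bad edge $(u_1,v_1)(u_2,v_2)$ it recolors \emph{both} endpoints, swapping their $H$-coordinates: $(u_1,v_1)\mapsto (f_G(u_1),f_H(v_2))$ and $(u_2,v_2)\mapsto (f_G(u_2),f_H(v_1))$. This keeps the coloring proper (the $G$-coordinate still separates adjacent vertices) and uses only colors already in $[k_1]\times[k_2]$. Bad edges are processed greedily: after handling one, the closed neighborhoods of its endpoints are frozen and removed from further consideration. The verification is then a three-case analysis over each edge $e$ of $G\times H$ according to whether both, one, or neither endpoint of $e$ had its color changed (``fully'', ``partially'', or ``non-updated''), showing directly that in every case $f(N[\cdot])$ differs on the two endpoints. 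No global structure on the set of bad edges is needed; overlap is absorbed into the ``partially updated'' case. This specific swap, together with the three-case verification, is the missing ingredient in your plan.
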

\begin{proof}
Let $\chi(G)=k_1$ and $\chi(H)=k_2$.
Let $f_G: V(G) \rightarrow [k_1]$ and $f_H: V(H) \rightarrow [k_2]$ are proper colorings of $G$ and  $H$ respectively. Using the colorings $f_G$ and $f_H$, we construct a lid-coloring of $G \times H$ in two phases. In the first phase we define a coloring $g:V(G\times H)\rightarrow [k_1]\times [k_2]$ such that  
for each $(u,v)\in V(G \times H)$,  $g((u,v))=(f_G(u),f_H(v))$.

In the second phase we modify the coloring $g$ to get a lid-coloring of $G \times H$. 
The idea behind the second phase coloring is as follows. If an edge $e=(u_1,v_1)(u_2,v_2)$ is bad then from Lemma~\ref{lem-tensor-two} we know that 
$g(N[(u_1,v_1)])=g(N[(u_2,v_2)])=\{g((u_1,v_1)),g((u_2,v_2))\}$. Consider the maximal connected subgraph  $J$ of $G \times H$ induced by the colors $g((u_1,v_1)),g((u_2,v_2))$ containing the vertices $(u_1,v_1)$ and $(u_2,v_2)$. It is easy to see that $J$ is bipartite and we know that every bipartite graph is $4$-lid-colorable. Therefore, we color the subgraph $J$ with four colors $(f_G(u_1)$, $f_H(v_1))$, $(f_G(u_2),f_H(v_2))$, $(f_G(u_1),f_H(v_2))$ and $(f_G(u_2),f_H(v_1))$. The second phase coloring $f$ of $G \times H$ is given in Algorithm~1. Next, we show that $f$ is a lid-coloring of $G \times H$. \vspace{-0.5cm}

\begin{algorithm}[]
\DontPrintSemicolon
\caption{A lid-coloring  
of $G \times H$.}
\label{alg-1}

\KwIn{$G \times H$, $f_G$, $f_H$  and $g$}
\KwOut{ A lid-coloring $f$ of $G \times H$}
 
{

$S=\emptyset$, $Q=V(G \times H)$, $f((u,v))=g((u,v))$ for all $(u,v) \in V(G \times H)$ 

\If {$(G \times H)[Q]$ has a bad edge $e=(u_1,v_1)(u_2,v_2)$ w.r.t. $g$}{
 $f((u_1,v_1))=(f_G(u_1),f_H(v_2))$  \;
 $f((u_2,v_2))=(f_G(u_2),f_H(v_1))$\;
   $S=S \cup (N[(u_1,v_1)] \cup N[(u_2,v_2)])$ \;
   $Q=Q \setminus S$\;
}
}

return (Coloring $f$ of $G \times H$)

\end{algorithm}

\vspace{-0.5cm}

\begin{claim}\label{cla:tensor-proper}
$f$ is a proper-coloring of $G \times H$.
\end{claim}

\begin{proof}
Let $(u_1,v_1)$ and $(u_2,v_2)$ be two adjacent vertices of $G \times H$. We know that $u_1u_2 \in E(G)$ and $f_G(u_1) \neq f_G(u_2)$. 
We have $f((u_1,v_1))=(f_G(u_1),-)$ and $f((u_2,v_2))=(f_G(u_2),-)$. Since $f_G(u_1) \neq f_G(u_2)$, we get $f((u_1,v_1))\neq f((u_2,v_2))$. Therefore  $f$ is a proper coloring of $G\times H$. \qed
\end{proof}

Before proceeding to prove that $f$ is a lid-coloring of $G \times H$, we  classify the edges of $G \times H$ into three categories as follows. An edge $e$ in $G \times H$ is called `fully updated' if  the colors of both its endpoints are changed by Algorithm~1. An edge $e$ is called `partially updated' if the color of only one endpoint of $e$ is changed by Algorithm~1. If both endpoints of $e$ are not changed by Algorithm~1 then we call the edge $e$ a `non-updated' edge.

\begin{claim}\label{cla:tensor-lid}
$f$ is a lid-coloring of $G \times H$.
\end{claim}
\begin{proof}
We show that every edge $e$ of $G \times H$ is good with respect to coloring $f$.

\medskip 
\noindent
\textbf{Case~1:} $e$ is fully updated.

Let $e=(u_1,v_1)(u_2,v_2)$. Without loss of generality, assume that degree of $(u_2,v_2)$ is at least two in $G \times H$. 
As $e$ is a fully updated edge,  $e$ is bad with respect to $g$. That is,  
$g(N[(u_1,v_1)])=g(N[(u_2,v_2)])=\{(f_G(u_1),f_H(v_1)) $, $(f_G(u_2),f_H(v_2))\}$. Algorithm~1 changes colors of $(u_1,v_1)$ and $(u_2,v_2)$ to $(f_G(u_1),f_H(v_2))$ and $(f_G(u_2),f_H(v_1))$ respectively. Also the colors of the vertices in the set $(N[(u_1,v_1)]\cup N[(u_2,v_2)]) \setminus \{(u_1,v_1),(u_2,v_2)\}$ are not changed by Algorithm~1. Therefore,  $(f_G(u_1),f_H(v_1))\notin f(N[(u_1,v_1)])$ as $f_H(v_1) \neq f_H(v_2)$.
However,  $(f_G(u_1),f_H(v_1)) \in N[(u_2,v_2)]$. Therefore, $e$ is good with respect to $f$.

\medskip
\noindent
\textbf{Case~2:} $e$ is partially  updated.

Let $e=(u_2,v_2)(u_3,v_3)$. 
Without loss of generality, assume that the color of $(u_2,v_2)$ is updated by Algorithm~1.
Then there exists an edge  $e'=(u_1,v_1)(u_2,v_2)$ which is fully updated. From Lemma~\ref{lem-tensor-two} we know that $g((u_1,v_1))=g((u_3,v_3))= (f_G(u_1),f_H(v_1))= (f_G(u_3),f_H(v_3))$.

Notice that $(f_G(u_1),f_H(v_2))\in f(N[(u_2,v_2)])$. 
However, the color $(f_G(u_1),f_H(v_2))\notin N[(u_3,v_3)]$ as $f_G(u_1) = f_G(u_3)$ and $f_H(v_2) \neq f_G(v_3)$. Therefore $e$ is good with respect to $f$.

\medskip
\noindent
\textbf{Case~3:} $e$ is  non-updated.

Let $e=(u_3,v_3)(u_4,v_4)$. If Algorithm~1 doesn't update any vertex from the set $N[(u_3,v_3)] \cup N[(u_4,v_4)]$ then clearly $e$ is good with respect to $f$.

Suppose, the color of a vertex $(u_2,v_2) \in N((u_3,v_3))$ is updated by Algorithm~1. 
Then there exists an edge $e'=(u_1,v_1)(u_2,v_2)$ which is fully updated. From Lemma~\ref{lem-tensor-two} we know that $g((u_1,v_1))=g((u_3,v_3))= (f_G(u_1),f_H(v_1))= (f_G(u_3),f_H(v_3))$.

Suppose that $e$ is bad with respect to $f$.  Then 
$f((u_2,v_2))=f((u_4,v_4))=(f_G(u_2),f_H(v_1))$.
That is we have   $f((u_3,v_3))=(f_G(u_1),f_H(v_1))$ and $f((u_4,v_4)) =(f_G(u_2),f_H(v_1))$, which is a contradiction as $f_H(v_3)=f_H(v_4)$ and $v_3v_4 \in E(H)$. Therefore $e$ is good with respect to $f$.   \qed
\end{proof} 
\qed
\end{proof}

We can easily see that the bound given in the Theorem~\ref{th-tensor-gen} is sharp for $G=H=P_4$.

\subsection{Tensor product for two paths}
We use the following known results on tensor product in our proofs. 

\begin{lemma}[\cite{hammack2011handbook}]\label{lem-tensor0}
    Let $G$ and $H$ be two graphs. If $G$ or $H$ is bipartite then $G \times H$ is bipartite. 
\end{lemma}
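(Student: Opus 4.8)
\textbf{Proof proposal for Lemma~\ref{lem-tensor0} ($G$ or $H$ bipartite $\implies G\times H$ bipartite).}

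The plan is to produce an explicit $2$-coloring of $V(G\times H)$ that is proper with respect to the tensor-product adjacency. Recall that in $G\times H$, two vertices $(u_1,v_1)$ and $(u_2,v_2)$ are adjacent exactly when $u_1u_2\in E(G)$ \emph{and} $v_1v_2\in E(H)$. Suppose without loss of generality that $G$ is bipartite, and fix a proper $2$-coloring $c_G\colon V(G)\to\{0,1\}$ of $G$. Define $c\colon V(G\times H)\to\{0,1\}$ by $c((u,v))=c_G(u)$, i.e.\ simply pull back the bipartition of $G$ along the projection to the first coordinate.

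The key step is to check that $c$ is proper on $G\times H$. Take any edge $(u_1,v_1)(u_2,v_2)$ of $G\times H$. By definition of the tensor product this forces $u_1u_2\in E(G)$, and since $c_G$ is a proper $2$-coloring of $G$ we get $c_G(u_1)\neq c_G(u_2)$, hence $c((u_1,v_1))=c_G(u_1)\neq c_G(u_2)=c((u_2,v_2))$. Thus no edge of $G\times H$ is monochromatic, so $G\times H$ is $2$-colorable, i.e.\ bipartite (an empty edge set is trivially fine). The case where $H$ is bipartite is symmetric, using the projection to the second coordinate instead.

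There is essentially no obstacle here: the only subtlety worth a sentence is that the tensor product's edge condition is a conjunction ($u_1u_2\in E(G)$ \emph{and} $v_1v_2\in E(H)$), which is stronger than what we need — we only use that the first coordinates form an edge of $G$ — so the pulled-back bipartition automatically works. One could alternatively phrase this as: the first-coordinate projection $G\times H\to G$ is a graph homomorphism, and the preimage of a bipartite graph under a homomorphism is bipartite; but the direct $2$-coloring argument above is self-contained and shorter.
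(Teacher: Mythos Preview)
Your proof is correct and is the standard argument for this fact. Note that the paper does not actually give its own proof of this lemma: it is quoted as a known result from \cite{hammack2011handbook}, so there is nothing to compare against beyond observing that your explicit $2$-coloring via the projection to the bipartite factor is exactly the usual textbook proof.
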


\begin{lemma}[\cite{weichsel1962kronecker}]\label{lem-tens1}
    For two connected graphs $G$ and $H$, the tensor product $G\times H$ is connected if and only if either $G$ or $H$ is non-bipartite. 
\end{lemma}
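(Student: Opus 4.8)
The statement is the classical theorem of Weichsel on connectivity of tensor (categorical) products, so the plan is to reconstruct its standard proof. Throughout I would use the basic correspondence: a walk of length $\ell$ in $G \times H$ from $(a,b)$ to $(c,d)$ is precisely a pair consisting of a walk of length $\ell$ in $G$ from $a$ to $c$ together with a walk of length $\ell$ in $H$ from $b$ to $d$. Consequently, $(a,b)$ and $(c,d)$ lie in the same component of $G \times H$ if and only if there is some common length $\ell$ for which $G$ has an $a$--$c$ walk of length $\ell$ and $H$ has a $b$--$d$ walk of length $\ell$.

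For the ``only if'' direction I would prove the contrapositive: if both $G$ and $H$ are bipartite, then $G \times H$ is disconnected. Take bipartitions $V(G) = A_G \cup B_G$ and $V(H) = A_H \cup B_H$, and set $X = (A_G \times A_H) \cup (B_G \times B_H)$ and $Y = (A_G \times B_H) \cup (B_G \times A_H)$. Since every edge of $G$ joins $A_G$ to $B_G$ and every edge of $H$ joins $A_H$ to $B_H$, every edge of $G \times H$ has both endpoints inside $X$ or both inside $Y$. Because $G$ and $H$ are connected with at least two vertices, all four classes $A_G, B_G, A_H, B_H$ are nonempty, hence $X$ and $Y$ are nonempty with no edge between them, and $G \times H$ is disconnected.

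For the ``if'' direction, assume without loss of generality that $G$ is non-bipartite, so $G$ contains a closed walk of odd length. The key lemma I would establish is that for any $u, u' \in V(G)$ there is an integer $M$ so that for every $m \geq M$ there is a $u$--$u'$ walk in $G$ of length exactly $m$: starting from a fixed $u$--$u'$ walk and padding with back-and-forth traversals of an incident edge gives walks of every length $\ell_0, \ell_0+2, \ell_0+4, \dots$ of one parity, and splicing in one traversal of an odd closed walk (reachable from the walk by connectivity of $G$) together with further $+2$ padding supplies all large lengths of the other parity too. For $H$ connected, the same padding argument without the odd-cycle step yields, for any $v, v'$, $v$--$v'$ walks of all sufficiently large lengths of a single fixed parity $p$ (and of both parities if $H$ is itself non-bipartite). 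Given arbitrary vertices $(u_1,v_1), (u_2,v_2)$ of $G \times H$, choose $m$ large and with $m \equiv p \pmod 2$; then $G$ has a $u_1$--$u_2$ walk of length $m$ and $H$ has a $v_1$--$v_2$ walk of length $m$, so the two vertices lie in the same component. Hence $G \times H$ is connected.

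The main obstacle is the walk-length lemma in the non-bipartite factor: one must argue carefully that the odd closed walk can actually be inserted into a $u$--$u'$ walk (using connectivity of $G$ to travel to the odd cycle and back), and that the ``sufficiently large'' thresholds coming from $G$ and from $H$ can be met simultaneously at a common value $m$ of the parity dictated by $H$. The bipartite case also requires the small check that all four bipartition classes are nonempty, which is exactly where the hypothesis that each factor is connected with at least two vertices is used.
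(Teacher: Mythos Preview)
Your reconstruction is correct and is exactly the classical Weichsel argument via synchronized walks and parity. The paper does not give its own proof of this lemma---it is simply quoted from \cite{weichsel1962kronecker}---so there is nothing in the paper to compare against; your added hypothesis that each factor has at least two vertices is indeed implicitly required and matches the standing assumption the paper makes at the start of Section~\ref{sec:tensor}.
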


\begin{lemma}[\cite{weichsel1962kronecker}]\label{lem-tens2}
     If $G$ and $H$ are connected bipartite graphs then $G\times H$ has exactly two components.
\end{lemma}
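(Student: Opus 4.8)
The plan is to exhibit the two components explicitly. Write $V(G)=A_G\cup B_G$ and $V(H)=A_H\cup B_H$ for the bipartitions of the connected bipartite graphs $G$ and $H$; since each of $G,H$ is connected with at least two vertices, all four parts are nonempty and each of $G,H$ has at least one edge. I would set
\[
 X=(A_G\times A_H)\cup(B_G\times B_H), \qquad Y=(A_G\times B_H)\cup(B_G\times A_H),
\]
so that $\{X,Y\}$ partitions $V(G\times H)$ into two nonempty sets. First I would observe that no edge of $G\times H$ joins $X$ to $Y$: if $(u_1,v_1)(u_2,v_2)\in E(G\times H)$ then $u_1u_2\in E(G)$ and $v_1v_2\in E(H)$, so $u_1,u_2$ lie in different parts of $G$ and $v_1,v_2$ in different parts of $H$, and a short case check shows both endpoints then lie in $X$, or both in $Y$. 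Hence $G\times H$ has at least two components.

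The remaining task is to show that $(G\times H)[X]$ and $(G\times H)[Y]$ are each connected. I would use the standard fact that in $G\times H$ there is a walk of length $\ell$ from $(a,b)$ to $(c,d)$ if and only if there is a walk of length $\ell$ from $a$ to $c$ in $G$ and one of length $\ell$ from $b$ to $d$ in $H$. For fixed $u_1,u_2\in V(G)$: since $G$ is connected a $u_1$--$u_2$ walk exists, and since $G$ has an edge any such walk can be lengthened by $2$ (traverse an incident edge back and forth); since $G$ is bipartite all $u_1$--$u_2$ walks share the parity $d_G(u_1,u_2)\bmod 2$, which is $0$ exactly when $u_1,u_2$ lie in the same part. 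So the length set $L_G(u_1,u_2)$ is an arithmetic progression $\{\ell_0,\ell_0+2,\dots\}$ of that parity, and similarly $L_H(v_1,v_2)=\{m_0,m_0+2,\dots\}$.

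To finish, take $(u_1,v_1),(u_2,v_2)\in X$. Checking the two ways both vertices can lie in $X$ shows that $u_1,u_2$ are in the same part of $G$ if and only if $v_1,v_2$ are in the same part of $H$, i.e.\ $\ell_0\equiv m_0\pmod 2$; hence the two progressions $L_G(u_1,u_2)$ and $L_H(v_1,v_2)$ intersect, yielding a walk from $(u_1,v_1)$ to $(u_2,v_2)$ in $G\times H$. Thus $(G\times H)[X]$ is connected, and symmetrically so is $(G\times H)[Y]$; combined with the first paragraph this gives exactly two components.

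The main thing to be careful about is the walk--parity bookkeeping --- in particular that $G$ and $H$ each have an edge so that walk lengths can be padded by $2$, which is guaranteed here by connectedness together with $|V|\ge 2$ --- as well as the routine verification that ``both endpoints in $X$'' is precisely the same-part condition on the two coordinates. One could instead obtain ``$\ge 2$ components'' for free from the contrapositive of Lemma~\ref{lem-tens1}, but the explicit sets $X,Y$ are needed for the matching upper bound in any case.
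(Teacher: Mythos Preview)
Your proof is correct and follows the standard approach to this classical result. Note, however, that the paper does not supply its own proof of this lemma: it is quoted from Weichsel~\cite{weichsel1962kronecker} without argument, so there is nothing in the paper to compare against. Your explicit description of the two components via the bipartitions, together with the walk-length parity argument, is exactly the usual proof; your remark that one needs each of $G,H$ to have at least one edge (so that walk lengths can be padded by~$2$) is a genuine hypothesis implicit in the cited statement.
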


\begin{theorem}
    For every pair of positive integers $m$ and $n$, where $2 \leq m \leq n$, we have

\[
    \chi_{lid}(P_m \times P_n)= 
\begin{cases}
    2& \text{if $m=2$ and $n=2$;}\\
    4& \text{if $m,n \geq 4$  
    are even;}\\
    3& \text{otherwise}  
\end{cases}
\]
\end{theorem}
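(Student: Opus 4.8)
The plan is to build everything on two structural facts about $G:=P_m\times P_n$. First, since $P_m$ and $P_n$ are bipartite, Lemma~\ref{lem-tensor0} makes $G$ bipartite, so Theorem~\ref{lem-bipartite} gives the universal upper bound $\chi_{lid}(G)\le4$. Second, by Lemma~\ref{lem-tens2}, $G$ has exactly two connected components (the two ``parity classes'' $\{(a_i,b_j):i+j\text{ even}\}$ and its complement, where $V(P_m)=\{a_1,\dots,a_m\}$, $V(P_n)=\{b_1,\dots,b_n\}$, since an edge of $G$ changes both coordinates by $\pm1$). If $m=n=2$ then $G$ is a disjoint union of two edges, so Lemma~\ref{lem-obs} gives $\chi_{lid}(G)=2$; and whenever $(m,n)\ne(2,2)$ we have $mn\ge6$, so at least one component has more than two vertices and Lemma~\ref{lem-obs} gives $\chi_{lid}(G)\ge3$. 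So the work reduces to showing (i) $\chi_{lid}(G)\ge4$ when $m,n$ are both even (and $(m,n)\ne(2,2)$) and (ii) $\chi_{lid}(G)\le3$ when at least one of $m,n$ is odd.

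For (i), I would isolate the following lemma: if a graph contains an \emph{induced} path $w_0w_1\cdots w_{r+1}$ with $\deg(w_0)=\deg(w_{r+1})=1$ and $r\ge2$ even, then its lid-chromatic number is at least $4$. To prove it, assume a proper $3$-lid-coloring $c$ exists. The pendant $w_0$ forces $c(N[w_1])=\{1,2,3\}$: since $\deg w_1\ge2$, we have $N[w_0]=\{w_0,w_1\}\ne N[w_1]$, so the edge $w_0w_1$ must be good, and as $c(N[w_0])$ has size two (by properness), $N[w_1]$ must see a third colour. Walking along the path one proves by induction --- using that the path is induced, so $w_{i-1}\notin N[w_{i+1}]$ and hence $N[w_i]\ne N[w_{i+1}]$ --- that $c(N[w_i])=\{1,2,3\}$ for every odd $i\le r$, while for every even $i\le r$ all neighbours of $w_i$ receive the single colour $c(w_{i-1})$, so $|c(N[w_i])|=2$. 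In particular $|c(N[w_r])|=2$ (as $r$ is even); but $w_{r+1}$ is a pendant at $w_r$, and the base-case argument applied to $w_rw_{r+1}$ forces $c(N[w_r])=\{1,2,3\}$, a contradiction.

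Next I would exhibit such a path in $G=P_m\times P_n$ when $m,n$ are even. The degree of $(a_i,b_j)$ in $G$ equals the product of the path-degrees of $a_i$ and $b_j$, so the only degree-$1$ vertices of $G$ are the four corners, and since $m+n$ is even the corners $(a_1,b_1)$ and $(a_m,b_n)$ lie in the same component. Any path of $G$ joining them has length $\equiv m-1\equiv n-1\pmod 2$, hence odd; taking a shortest (thus induced) one, of length $\max\{m,n\}-1=n-1\ge3$ (using $m\le n$ and $n\ge4$), we obtain an induced path with the two corners as degree-$1$ endpoints and an even number $r=n-2\ge2$ of internal vertices. The lemma then gives $\chi_{lid}(G)\ge4$, so $\chi_{lid}(P_m\times P_n)=4$.

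For (ii), when at least one of $m,n$ is odd it remains to $3$-lid-color each of the two components; I would do this with an explicit repeating ``brick''-type pattern plus a few small base cases, much as in the Cartesian-product constructions above, and together with $\chi_{lid}(G)\ge3$ this yields $\chi_{lid}(P_m\times P_n)=3$. The main obstacle is part (i): one must set up the colour-propagation induction so that the closed neighbourhoods compared along the path are genuinely distinct at every step, and one must spot the parity fact that in the all-even case every corner-to-corner path has odd length, making the number of internal vertices even; the constructions needed for (ii) are routine but somewhat tedious.
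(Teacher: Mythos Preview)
Your approach is essentially the paper's. Both obtain the upper bound $4$ from Theorem~\ref{lem-bipartite}, the lower bound $3$ from Lemma~\ref{lem-obs}, the upper bound $3$ in the ``at least one odd'' case by an explicit pattern, and the lower bound $4$ in the ``both even'' case by the same propagation argument: walk a shortest path between the two degree-$1$ corners $(a_1,b_1)$ and $(a_m,b_n)$ (which lie in the same component and are at odd distance), and show that in any $3$-lid-coloring the quantity $|c(N[\cdot])|$ must alternate between $2$ and $3$ along this path, giving a contradiction at the far pendant. Your only real difference is packaging: you isolate this argument as a standalone lemma about induced paths with pendant ends and an even number of internal vertices, whereas the paper argues it in place.

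One remark on your case split. Your (i) covers \emph{all} even pairs $(m,n)\ne(2,2)$, whereas the theorem as stated places $m=2$, $n\ge4$ even under ``otherwise'' with value $3$. Your version is in fact the correct one: for even $n$, the graph $P_2\times P_n$ is a disjoint union of two copies of $P_n$, and Lemma~\ref{lem:path} gives $\chi_{lid}(P_n)=4$. So the stated theorem --- and the paper's case analysis, whose Cases~3 and~4 both require an odd index and hence never treat $m=2$ with $n$ even --- has an oversight for these pairs; your lower-bound lemma (and the paper's own propagation) goes through for $m=2$ just as well.
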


\begin{proof}
Let $ V(P_m) = \{u_1,u_2,\ldots,u_m\}$, $V(P_n) = \{v_1,v_2,\ldots,v_n\}$ and $V(P_m \times P_n) = \{(u_i,v_j) 
 \mid i \in [m],j\in [n]\}$. 
 
\medskip
\noindent
\textbf{Case~1:} When $m=2$ and $n=2$.

The graph $P_2 \times P_2$ is a disjoint union of  two $P_2$'s. Hence, $\chi_{lid}(P_2 \times P_2) = 2$. 

\medskip
\noindent
\textbf{Case~2:}  When $m,n \geq 4$ are even. 

Using Lemma~\ref{lem-tens1} and Lemma~\ref{lem-tens2} we can see that the graph $P_m \times P_n$ is a disconnected graph having exactly two connected components.  
Let the two connected components be $B_1$ and $B_2$, where $V(B_1)=\{(u_i,v_j)~|~ i+j \text{~is even} \}$ and $V(B_2)=\{(u_i,v_j)~|~ i+j \text{~is odd}\}$. 
As  $m$ and $n$ are even, both $B_1$ and $B_2$ contain exactly two vertices of degree one.  
The two degree one vertices in $B_1$ are $(u_1,v_1)$ and $(u_m,v_n)$.

Suppose, $\chi_{lid}(B_1) =3$ and let $f$ be a $3$-lid-coloring of $B_1$.
It is easy to see that the distance between $(u_1,v_1)$ and $(u_m,v_n)$ is $2q+1$ for some $q \in \mathbb{N}$.  We know $deg((u_1,v_1)) = deg((u_m, v_n)) = 1$. Thus, we have $|f(N[(u_1,v_1)])| = 2$. This implies that $|f(N[(u_2,v_2)])| = 3$, otherwise $f(N[(u_1,v_1)])=f(N[(u_2,v_2)])$, contradicting the fact that $f$ is a lid-coloring. 
Since $|f(N[(u_2,v_2)]|=3$, and $f$ is a $3$-lid-coloring of $B_1$ we get  $|f(N[(u,v)]|=2$ for every $(u,v) \in N((u_2,v_2))$. 
Continuing this way, for all the vertices on any shortest path from $(u_1,v_1)$ to $(u_m, v_n)$, we get $|f(N[(u_m,v_n)])| = 3$, 
which is not possible as $deg((u_m, v_n)) = 1$. This contradicts the assumption that $f$ is a $3$-lid-coloring of $B_1$.

Thus $\chi_{lid}(P_m \times P_n) \geq \chi_{lid}(B_1) \geq 4$. As $P_m \times P_n$ is a bipartite graph, from Theorem~\ref{lem-bipartite} we have $\chi_{lid}(P_m \times P_n) \leq 4$. Therefore, we have $\chi_{lid}(P_m \times P_n) = 4$. 

\medskip
\noindent
\textbf{Case~3:} When $m$ is odd and $n \geq 2$.

A $3$-lid-coloring of $P_m \times P_n$ is given in Fig. \ref{PmPn}. Therefore, we have $\chi_{lid}(P_m \times P_n) \leq 3$. From Lemma \ref{lem-obs}, we know that $\chi_{lid}(P_m \times P_n) \geq 3$.  Altogether, we have $\chi_{lid}(P_m \times P_n) = 3$.

\medskip
\noindent
\textbf{Case~4:} When $m \geq 2$ and $n$ is odd.

As tensor product is commutative, this case is same as Case~3. \qed

\begin{figure}[ht] 
 \centering
		
		\includegraphics[trim=3.3cm 16.5cm 6cm 3cm, clip=true, scale=0.9]{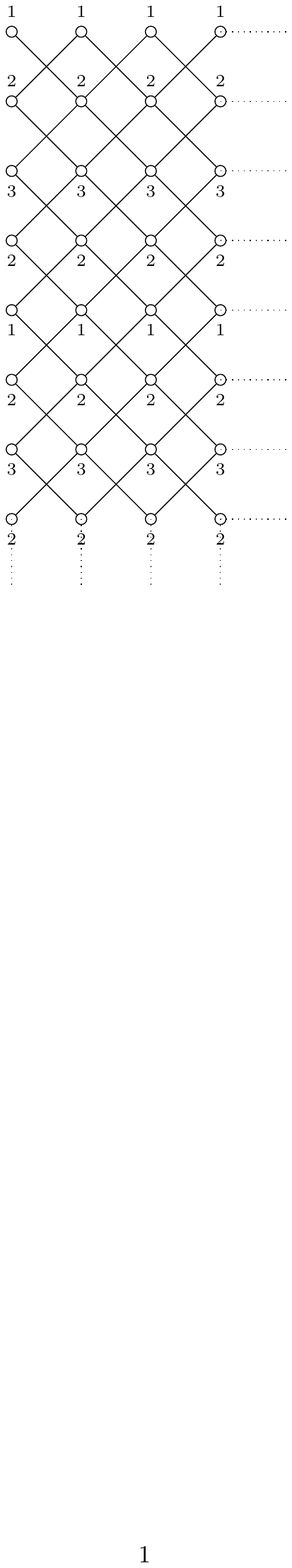}
   
\caption{~A 3-lid-coloring of $P_m \times P_n$.}
 \label{PmPn}
\end{figure}

\end{proof}

\subsection{Tensor product of a cycle and a path}
\begin{theorem} \label{thm:tensor}
Let $m$ and $n$ be two positive integers  such that $m \geq 3$ and $n\geq 2$. Then we have

\[
    \chi_{lid}(C_m \times P_n)= 
\begin{cases}
    3& \text{if $m \geq 3$ and $n$ is odd;}\\
   3 & \text{if $m
   $ is a multiple of $4$ and $n$ is even;}\\
    4& \text{otherwise}
\end{cases}
\]
\end{theorem}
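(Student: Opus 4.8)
The plan is to prove the three cases of Theorem~\ref{thm:tensor} separately, leaning on the structural lemmas about tensor products (Lemmas~\ref{lem-tensor0}--\ref{lem-tens2}) together with the known results on bipartite graphs and the value of $\chi_{lid}$ for paths and cycles.

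\medskip
\noindent\textbf{Lower bounds.} First I would dispose of the easy lower bound: in every case $C_m\times P_n$ contains at least two vertices (indeed $mn\ge 6$), so Lemma~\ref{lem-obs} gives $\chi_{lid}(C_m\times P_n)\ge 3$. This already settles the lower side of the first two cases. For the ``otherwise'' case ($m$ odd, or $m\equiv 2\pmod 4$, together with $n$ even) I need a lower bound of $4$. Here I distinguish by parity of $m$. If $m$ is odd, then by Lemma~\ref{lem-tens1} the graph $C_m\times P_n$ is connected and non-bipartite (it contains an odd closed walk coming from $C_m$), and it is not a triangle when $mn>3$; hence Lemma~\ref{lem-bipartite-triangle} forces $\chi_{lid}\ge 4$. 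If $m\equiv 2\pmod 4$ (so $m$ even but not a multiple of $4$) and $n$ is even, then by Lemmas~\ref{lem-tens1} and~\ref{lem-tens2} the product splits into exactly two bipartite components; I would argue that at least one component is a path-like graph whose two degree-one endpoints are at odd distance, and then run the same propagation argument used in Case~2 of the $P_m\times P_n$ theorem: starting from a degree-one endpoint, $|f(N[\cdot])|$ alternates $2,3,2,3,\dots$ along a shortest path, forcing $|f(N[\text{other endpoint}])|=3$, impossible for a degree-one vertex. This shows $\chi_{lid}\ge 4$ in the remaining case.

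\medskip
\noindent\textbf{Upper bounds.} For the first case ($n$ odd), $P_n$ has no ``obstruction'': I claim $C_m\times P_n$ is bipartite when $n$ is odd? No --- $P_n$ is always bipartite, so by Lemma~\ref{lem-tensor0} the product is bipartite \emph{regardless} of $n$, giving $\chi_{lid}\le 4$ always. So the real work in the first two cases is to bring the bound down to $3$. When $n$ is odd, $C_m\times P_n$ has a convenient ``folding'': the map $(u_i,v_j)\mapsto j$ is, up to the bipartition issue, controllable, and I would exhibit an explicit $3$-coloring (as indicated by a figure in the paper) and verify the lid-condition edge by edge using the fact that every vertex of degree $\ge 2$ sees both ``neighbouring layers''. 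When $m$ is a multiple of $4$ and $n$ is even, $C_m\times P_n$ is disconnected with two bipartite components, each of which I would show is isomorphic to (a subgraph closely related to) $P_{m/2}\,\square\,P_n$-type grids or to bipartite graphs admitting a $3$-lid-coloring; alternatively, exhibit the explicit $3$-coloring directly. In all remaining cases I already have the matching upper bound $4$ from bipartiteness (Theorem~\ref{lem-bipartite}).

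\medskip
\noindent\textbf{Main obstacle.} The routine part is the connectivity/bipartiteness bookkeeping and invoking the cited lemmas. The genuinely delicate part is the $3$-colorability claims: constructing explicit $3$-lid-colorings of $C_m\times P_n$ when ($n$ odd) or when ($4\mid m$, $n$ even), and verifying that \emph{every} adjacent pair with distinct closed neighbourhoods receives distinct colour sets --- this requires understanding the neighbourhood structure of the tensor product (each $(u_i,v_j)$ is adjacent to $(u_{i\pm1},v_{j\pm1})$), which is less intuitive than the Cartesian case, and handling the boundary layers $j=1$ and $j=n$ where degrees drop. The second subtle point is pinning down, in the $m\equiv 2\pmod 4$, $n$ even subcase, exactly which component contains the bad degree-one pair and that their distance is odd, so that the propagation argument applies cleanly; I expect this parity computation to be the crux of the lower-bound half.
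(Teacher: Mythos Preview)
Your upper-bound plan matches the paper's (explicit $3$-lid-colorings in the first two cases, bipartiteness plus Theorem~\ref{lem-bipartite} for the bound $4$ in the last). The lower-bound half for the ``otherwise'' case, however, has two genuine gaps.

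\textbf{(i) $m$ odd, $n$ even.} You assert that $C_m\times P_n$ is non-bipartite and then invoke Lemma~\ref{lem-bipartite-triangle}. This is false: $P_n$ is bipartite, so Lemma~\ref{lem-tensor0} makes $C_m\times P_n$ bipartite regardless of $m$ --- a fact you yourself note one paragraph later. Lemma~\ref{lem-tens1} only gives connectedness, not non-bipartiteness, so the argument collapses and you have no lower bound of $4$ in this subcase.

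\textbf{(ii) $m\equiv 2\pmod 4$, $n$ even.} You propose to reuse the $P_m\times P_n$ degree-one argument. But $C_m\times P_n$ has \emph{no} vertices of degree one when $m\ge 3$: each boundary vertex $(u_i,v_1)$ (resp.\ $(u_i,v_n)$) has the two neighbours $(u_{i-1},v_2),(u_{i+1},v_2)$ (resp.\ $(u_{i\pm1},v_{n-1})$), and all interior vertices have degree $4$. The propagation you describe depends essentially on a degree-one endpoint and does not transfer.

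The paper's lower bound for both subcases uses a different propagation. Start from a degree-$2$ boundary vertex $(u_1,v_1)$; in a putative $3$-lid-coloring one has $|f(N[(u_1,v_1)])|\in\{2,3\}$, and in either case the lid condition forces $|f(N[(u_i,v_j)])|$ to alternate between $2$ and $3$ with the parity of $i,j$ across the whole grid. Carrying this to column $v_{n-1}$ forces the colours $f((u_i,v_{n-1}))$ to be $4$-periodic in $i$ (two alternating values on odd $i$, a constant on even $i$), and then the wrap-around edge of $C_m$ at column $v_n$ yields a contradiction precisely when $4\nmid m$. This cycle-boundary periodicity argument is the idea your proposal is missing.
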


\begin{proof}
    Let $ V(C_m) = \{u_1,u_2,\ldots,u_m\}$, 
    $V(P_n) = \{v_1,v_2,\ldots,v_n\}$ and $V(C_m \times P_n) = \{(u_i,v_j) \mid i \in [m],j\in [n]\}$.\\
    
\noindent
\textbf{Case~1:}
 When $m \geq 3$ and $n$ is odd.

 A 3-lid-coloring of $C_m \times P_n$ is given in Fig. \ref{CmPn}. Therefore, 
 $\chi_{lid}(C_m \times P_n) \leq 3$. 
 From Lemma~\ref{lem-obs}, we know that $\chi_{lid}(C_m \times P_n) \geq 3$. 
 Thus, 
 $\chi_{lid}(C_m \times P_n) = 3$. 

\medskip
\noindent
\textbf{Case~2:} When $m$ is a multiple of 4 and 
$n$ is even.

When $n=2$, the graph 
$C_m \times P_n$ is disconnected in which each connected component is a copy of $C_m$.  
Therefore, 
 $\chi_{lid}(C_m \times P_n)= \chi_{lid}(C_m)$.  
 From Lemma \ref{lem-cycle},  
 we have $\chi_{lid}(C_m \times P_n) = 3$.

When  $n \geq 4$, a 3-lid-coloring of $C_m \times P_n$ is given in Fig. \ref{CmPn2}. Therefore, 
$\chi_{lid}(C_m \times P_n) \leq 3$. From Lemma~\ref{lem-obs}, we have $\chi_{lid}(C_m \times P_n) \geq 3$. Thus 
$\chi_{lid}(C_m \times P_n) = 3$.

\medskip
\noindent
\textbf{Case~3(a):} When $m$ 
is not a multiple of 4, and both $m$ and $n$ are even.

When $n=2$, from Lemma \ref{lem-cycle} we get  $\chi_{lid}(C_m \times P_n)=\chi_{lid}
(C_m)=4$. The arguments are similar to the above case when $n=2$. 

Now, we  deal with the case when $n \geq 4$. From Lemma~\ref{lem-tens1} and Lemma~\ref{lem-tens2} we get that the
 graph $C_m \times P_n$ is a disconnected bipartite graph and contains exactly two connected components. 
Let the two connected components be $B_1$ and $B_2$, where $V(B_1)=\{(u_i,v_j)~|~ i+j \text{~is even} \}$ and $V(B_2)=\{(u_i,v_j)~|~ i+j \text{~is odd}\}$. 

Suppose that $\chi_{lid}(B_1)=3$ and let $f$ be a $3$-lid-coloring of $B_1$. 
Consider a vertex 
$(u_1,v_1)$. We divide the proof into two cases based on the number of colors used by $f$ in the closed neighborhood of $(u_1,v_1)$.

\begin{figure}[h] 
 \captionsetup[subfigure]{justification=centering}
    \includegraphics[trim=4.5cm 21cm 7cm 3cm, clip=true, scale=1]{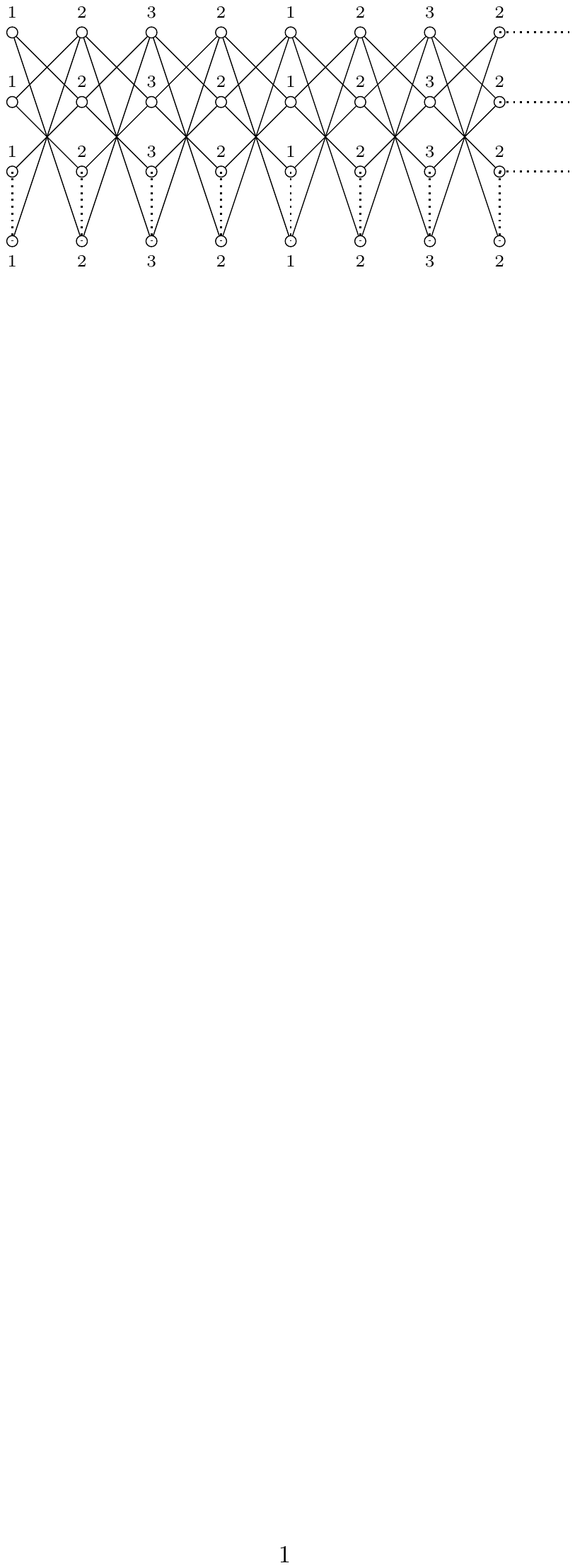}
    \caption{~ A $3$-lid-coloring of $C_m \times P_n$, when $n$ is odd, is obtained from the figure by selecting first $m$ rows and $n$ columns following the above pattern. }
    \label{CmPn}
    \end{figure}
		\begin{figure}
		\includegraphics[trim=3cm 15.5cm 6cm 2.5cm, clip=true, scale=0.8]{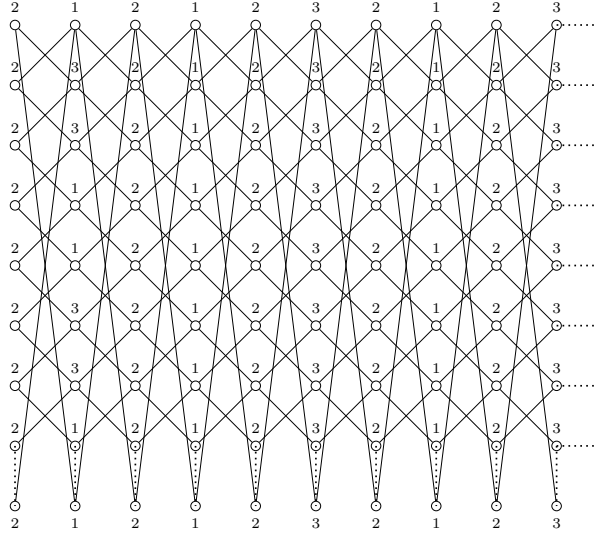}
  \caption{~ A 3-lid-coloring of $C_m \times P_n$, when $m$ is a multiple of 4
and $n\geq 4$ is even, is obtained 
from the figure by selecting first $m$ rows and $n$ columns following the above pattern. }
    \label{CmPn2}
	\end{figure}

\medskip
\noindent
\textbf{Case~(A):} $|f(N[(u_1,v_1)])| = 2$.

We know that $N((u_1,v_1))=\{(u_2,v_2), (u_m,v_2)\}$.
As $f$ is a lid-coloring, we have $|f(N[(u_2,v_2)])|=|f(N[(u_m,v_2)])| =3$. Next, we know that $N((u_2,v_2))=\{(u_1,v_1), (u_3,v_1),(u_1,v_3),(u_3,v_3)\}$. Since 
$|f(N[(u_2,v_2)])|=3$, and $f$ is a $3$-lid-coloring we have $|f(N[(u,v)
])|=2$ for every $(u,v) \in N((u_2,v_2))$.
Continuing the arguments this way, we get 
$|f(N[(u_i,v_j)])| = 2$ when both $i$ and $j$ are odd and $|f(N[(u_i,v_j)])| = 3$ when both $i$ and $j$ are even. 

Since $n-1$ is odd, we have $|f(N[(u_i,v_{n-1})])| = 2$, for each $i \in \{1,3, \ldots, m-1\}$. 
That is  
all the vertices in the set 
$\{(u_i,v_{n-2}),(u_i,v_n)~|~ i \in \{2,4, \ldots, m\} \}$
are assigned the same color by $f$.
Since $|f(N[(u_i,v_n)])| = 3$, for each $i\in \{2,4 \ldots, m-2\}$ and $N((u_i,v_n))= \{(u_{i-1},v_{n-1}), (u_{i+1},v_{n-1})\}$,   
therefore $f((u_{i-1},v_{n-1})) \neq f((u_{i+1},v_{n-1}))$.

As $f$ is a $3$-lid-coloring of $B_1$, we get that, all the vertices in the set $\{(u_{i},v_{n-1}) ~|~ i \in \{1,5,9 \ldots, m-1\} \}$ are assigned the same color by $f$. Similarly, all the vertices in the set $\{(u_{i},v_{n-1}) ~|~ i \in \{3,7, \ldots, m-3 \} \}$ are assigned the same color by $f$.

Combining all the above, we get  $f((u_1,v_{n-1})) = f((u_{m-1},v_{n-1}))$.  We know that $N((u_m,v_n)) = \{(u_1,v_{n-1}), (u_{m-1},v_{n-1}) \}$, therefore  we get $|f(N[(u_m,v_n)])| = 2$, which contradicts our assumption that $|f(N[(u_i,v_j)]| = 3$ when both $i$ and $j$ are even. Therefore, $f$ is not a 3-lid-coloring of $B_1$. Thus, 
$\chi_{lid}(C_m \times P_n) \geq \chi_{lid}(B_1) \geq 4$. 
As $C_m \times P_n$ is bipartite, from Theorem~\ref{lem-bipartite} we know that  $\chi_{lid}(C_m \times P_n) \leq 4$. Therefore, we have $\chi_{lid}(C_m \times P_n) = 4$.

\medskip
\noindent
\textbf{Case~(B):} $|f(N[(u_1,v_1)])| = 3$.

Following similar lines as proof of the above case, we can show that $\chi_{lid}(C_m \times P_n) = 4$.

\medskip
\noindent
\textbf{Case~3(b):} When $m$ is odd and $n$ is even.

The proof of this case is similar to the proof of  Case~3(a). \qed

\end{proof}

\subsection{Tensor product of two cycles}
\begin{lemma}\label{lem-tensor-even}
Let $m$ and $n$ be two integers such that  $3 \leq m \leq n$. If at least one of $m$ or $n$ is even then  $ \chi_{lid}(C_m \times C_n) =3$.
\end{lemma}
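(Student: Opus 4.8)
The plan is to pin down $\chi_{lid}(C_m \times C_n)$ by proving the matching bounds $\le 3$ and $\ge 3$, each via a result already available in the excerpt.

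\textbf{Structural observations.} For $m,n\ge 3$, every vertex $(u_i,v_j)$ of $C_m\times C_n$ has precisely the four neighbors $(u_{i-1},v_{j-1})$, $(u_{i-1},v_{j+1})$, $(u_{i+1},v_{j-1})$, $(u_{i+1},v_{j+1})$, and these are pairwise distinct because $i-1\ne i+1$ in $C_m$ and $j-1\ne j+1$ in $C_n$ (this is exactly where $m,n\ge 3$ is used). Hence $C_m\times C_n$ is $4$-regular. Moreover, since at least one of $m,n$ is even, the corresponding cycle is bipartite, so by Lemma~\ref{lem-tensor0} the graph $C_m\times C_n$ is bipartite.

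\textbf{Upper bound.} Being $4$-regular and bipartite, $C_m\times C_n$ is $3$-lid-colorable by Theorem~\ref{lem-regular} (applied with $k=4$), so $\chi_{lid}(C_m\times C_n)\le 3$. If exactly one of $m,n$ is even, the graph is connected by Lemma~\ref{lem-tens1}; if both are even, it has exactly two components by Lemmas~\ref{lem-tens1} and~\ref{lem-tens2}, but each component is again $4$-regular and bipartite, hence $3$-lid-colorable, and a single palette of three colors then lid-colors the whole graph (a coloring of a disconnected graph is a lid-coloring iff its restriction to each component is).

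\textbf{Lower bound.} Every connected component of $C_m\times C_n$ has at least $mn/2\ge 6$ vertices (using $3\le m\le n$ together with the parity hypothesis, which forces $mn\ge 12$), hence is not $2$-lid-colorable by Lemma~\ref{lem-obs}; therefore $\chi_{lid}(C_m\times C_n)\ge 3$. Combining the two bounds gives $\chi_{lid}(C_m\times C_n)=3$.

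I do not anticipate a genuine obstacle here: the only point requiring a little care is the disconnected case (both $m,n$ even), where one must observe that $4$-regularity, bipartiteness, and the ``more than two vertices'' property all pass to the two components identified by Lemmas~\ref{lem-tens1}--\ref{lem-tens2}.
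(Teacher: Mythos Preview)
Your proof is correct and follows essentially the same route as the paper: observe that $C_m\times C_n$ is $4$-regular and (via Lemma~\ref{lem-tensor0}) bipartite, then invoke Theorem~\ref{lem-regular}. The paper's version is terser---it neither spells out the lower bound $\ge 3$ nor separately treats the disconnected case---so your extra care with Lemma~\ref{lem-obs} and the component-wise application of Theorem~\ref{lem-regular} only adds rigor, not a different idea.
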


\begin{proof}
    
In this case, at least one of $C_m$ or $C_n$ is bipartite and hence from Lemma~\ref{lem-tensor0} $C_m \times C_n$ is bipartite. 
From Theorem~\ref{lem-regular}, we know that for $k \geq 4$, a $k$-regular graph is $3$-lid-colorable if and only if it is bipartite. Since $C_m \times C_n$ is a $4$-regular bipartite graph, 
we have that $\chi_{lid}(C_m \times C_n) = 3$. \qed
\end{proof}

For the rest of this section, we deal with the case where both $m$ and $n$ are odd. 
Thus from Lemma~\ref{lem-bipartite-triangle}, 
we have that $\chi_{lid}(C_m \times C_n) \geq 4$.

\begin{lemma}\label{lem-tensor-cc-1}
Let $m$ and $n$ be two odd positive integers such that  $m \geq 9$ and $n \geq 3$. Then we have $ \chi_{lid}(C_m \times C_n) =4$.
\end{lemma}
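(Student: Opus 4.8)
The lower bound needs no new argument: with $m$ and $n$ both odd, $C_m\times C_n$ is connected by Lemma~\ref{lem-tens1}, it is $4$-regular, it is not a triangle, and it is not bipartite (it carries a closed walk of odd length $mn$, obtained by going $n$ times around $C_m$ and $m$ times around $C_n$). Hence $\chi_{lid}(C_m\times C_n)\ge 4$ — this is exactly the inequality already recorded before the statement, and it also follows directly from Theorem~\ref{lem-regular}, since a non‑bipartite $4$-regular graph has no $3$-lid-coloring. So the whole task is to exhibit a $4$-lid-coloring of $C_m\times C_n$ for all odd $m\ge 9$ and all odd $n\ge 3$.

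My plan is to transplant the block-tiling scheme of Lemma~\ref{lem:mn12} to the tensor product. Write $V(C_m)=\{u_1,\dots,u_m\}$ and $V(C_n)=\{v_1,\dots,v_n\}$, and observe that in $C_m\times C_n$ the closed neighborhood of $(u_i,v_j)$ lies inside the ``window'' formed by rows $\{i-1,i,i+1\}$ and columns $\{j-1,j,j+1\}$ (indices cyclic). Consequently, if $C_m$ is cut into cyclically concatenated blocks of consecutive rows of widths $3$ and $5$, and $C_n$ into blocks of consecutive columns of widths $2$ and $3$, and if we have $4$-colorings of the four resulting rectangular patches ($3\times 2$, $3\times 3$, $5\times 2$, $5\times 3$) that are internally ``lid-consistent'' and agree on their boundary rows and boundary columns, then assembling the patches along the two decompositions yields a proper coloring of $C_m\times C_n$ whose lid property can be checked window by window. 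Since $\gcd(3,5)=1$ with Frobenius number $7$, Lemma~\ref{lem-main3} writes every odd $m\ge 9$ as $m=3\alpha+5\beta$ with $\alpha,\beta\ge 0$; since $\gcd(2,3)=1$, every $n\ge 2$ (in particular every odd $n\ge 3$) is $2\gamma+3\delta$. Thus the four patch-colorings, displayed in figures in the pattern style already used in Fig.~\ref{CmPn} and Fig.~\ref{CmPn2}, cover all $(m,n)$ in the range of the lemma, and together with the lower bound this gives $\chi_{lid}(C_m\times C_n)=4$. (Should a single patch family fail to close up for the small column-lengths $n\in\{3,5,7\}$, I would instead give direct $4$-lid-colorings for those few values of $n$ and tile only in the row direction, mirroring how the Cartesian argument separated out $C_5\square C_n$ and $C_m\square C_n$ for $m\in\{7,9,11\}$.)

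The real obstacle is the construction and verification of the handful of base patches: properness is automatic from the product structure and the decomposition of $m$ and $n$ is immediate, but ruling out \emph{bad} edges is delicate. Because $C_m\times C_n$ is $4$-regular with small, highly symmetric closed neighborhoods, the condition that adjacent vertices with distinct closed neighborhoods get distinct color sets is tight, and it must be checked (a) inside each patch, (b) across every row seam and every column seam — where the shared boundary must be wide enough that windows straddling a seam still see the correct color sets — and (c) across both cyclic wrap-arounds. This case analysis is precisely what the figures are meant to discharge; once four patches with matching boundaries are fixed, the remainder of the proof is routine bookkeeping. \qed
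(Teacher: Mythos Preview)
Your plan has a real gap: you never construct the four base patches, and you yourself flag their verification as ``the real obstacle.'' Without explicit $3\times2$, $3\times3$, $5\times2$, $5\times3$ patches with matching boundary rows/columns and a checked lid property across every seam and both wrap-arounds, the argument is only an outline. There is no a priori reason such patches exist with all the required compatibilities, so the proposal as written does not establish the upper bound.

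More importantly, you have missed the one-line argument the paper actually uses. The hypothesis $m\ge 9$ (odd) is precisely the range in which $\chi_{lid}(C_m)=4$ by Lemma~\ref{lem-cycle}. Take any $4$-lid-coloring $g$ of $C_m$ and define $f(u,v)=g(u)$ on $C_m\times C_n$. Properness is immediate since adjacency in the tensor product forces $u_1u_2\in E(C_m)$. For the lid condition, note that $f(N[(u_1,v_1)])=g(N[u_1])$ (every $v_1$ has a neighbor in $C_n$), and for adjacent $(u_1,v_1),(u_2,v_2)$ we have $u_1u_2\in E(C_m)$ with $N[u_1]\neq N[u_2]$ (as $m\ge 4$), so $g(N[u_1])\neq g(N[u_2])$ because $g$ is a lid-coloring. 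That is the entire upper-bound proof; no tiling, no base cases, no seams. Your block scheme, even if it could be completed, buys nothing here and obscures why the bound $m\ge 9$ enters.
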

\begin{proof}
    As $m \geq 9$ is an odd integer, from Lemma \ref{lem-cycle} we know that $\chi_{lid}(C_m)=4$. 
    Let $g$ be a $4$-lid-coloring of $C_m$. We define a $4$-lid-coloring $f$ of $C_m \times C_n$ as $f(u,v)=g(u)$ for every $(u,v) \in V(C_m \times C_n)$. It is easy to see that $f$ is a proper coloring of $C_m \times C_n$. 
    
    Consider two adjacent vertices $(u_1,v_1)$ and $(u_2,v_2)$. From Lemma~\ref{lem-tensor-nbhd} we know  that $N[(u_1,v_1)]\neq N[(u_2,v_2)]$.
    We have $f(N[(u_1,v_1)])=g(N[u_1])$ and  $f(N[(u_2,v_2)])=g(N[u_2])$. Since $u_1u_2 \in E(C_m)$ and $N[u_1] \neq N[u_2]$, we have $g(N[u_1]) \neq g(N[u_2])$. Therefore, 
    $f(N[(u_1,v_1)]) \neq f(N[(u_2,v_2)])$. Hence, $f$ is a $4$-lid-coloring of $C_m \times C_n$. \qed
  
\end{proof}

\begin{lemma}\label{lem-tensor-cc-2}
    $\chi_{lid}(C_m \times C_n)=4$ for the pairs $(m,n) \in \{(3,7),(5,5),(5,7),(7,7)\} $.
\end{lemma}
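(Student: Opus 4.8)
The plan is to prove the two inequalities separately, and the lower bound is already in hand: as recorded after Lemma~\ref{lem-tensor-even}, when both $m$ and $n$ are odd the graph $C_m\times C_n$ is connected, non-bipartite and not a triangle, so Lemma~\ref{lem-bipartite-triangle} forbids a $3$-lid-coloring and $\chi_{lid}(C_m\times C_n)\ge 4$. Hence it suffices to exhibit a $4$-lid-coloring of $C_m\times C_n$ for each of the four pairs $(m,n)\in\{(3,7),(5,5),(5,7),(7,7)\}$. It is worth noting why no earlier result does this for free: the projection trick of Lemma~\ref{lem-tensor-cc-1} needs a $4$-lid-coloring $g$ of $C_m$ with $g(N[u])\ne g(N[v])$ on every edge $uv$ with $N[u]\ne N[v]$, but $C_3=K_3$ has all closed neighborhoods equal, while $\chi_{lid}(C_5)=\chi_{lid}(C_7)=5>4$ by Lemma~\ref{lem-cycle}; and Theorem~\ref{th-tensor-gen} only yields the far weaker bound $\chi(C_m)\chi(C_n)=9$. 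So genuinely two-dimensional colorings are required.

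First I would fix coordinates, identifying $V(C_m\times C_n)$ with $\mathbb{Z}_m\times\mathbb{Z}_n$, where $(i,j)$ is adjacent to $(i',j')$ iff $i-i'\equiv\pm1\pmod m$ and $j-j'\equiv\pm1\pmod n$; then every vertex has degree $4$ with open neighborhood $\{(i\pm1,\,j\pm1)\}$. For each of the four (small) products I would display an explicit map $f:\mathbb{Z}_m\times\mathbb{Z}_n\to\{1,2,3,4\}$ as a coloring of an $m\times n$ grid in a figure. Two checks are then required for each $f$: that it is a proper coloring, i.e.\ none of the four diagonal neighbors of a cell repeats its color; and that every edge is good. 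For the second, Lemma~\ref{lem-tensor-nbhd} applies (each of the four pairs has a factor with at least three vertices), so $N[u]\ne N[v]$ holds for \emph{every} edge $uv$, and it remains only to confirm $f(N[u])\ne f(N[v])$ for each edge, i.e.\ that some color lies in exactly one of the two closed neighborhoods.

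To keep this second verification short I would exploit the symmetry of $C_m\times C_n$: the translation group $\mathbb{Z}_m\times\mathbb{Z}_n$ acts vertex-transitively, and the colorings I choose will be invariant, up to a fixed permutation of $\{1,2,3,4\}$, under a large subgroup of translations (a periodic pattern along one or both coordinates). Consequently the edge set splits into a small constant number of orbits, and checking the good-edge condition on one representative per orbit suffices, reducing each case to comparing a handful of closed-neighborhood color sets. Together with Lemma~\ref{lem-tensor-cc-1} and the lower bound above, the four cases settled here complete the remaining odd--odd pairs that are needed later.

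The difficulty is not conceptual but constructive: one must actually produce $4$-colorings that are simultaneously proper and leave no bad edge, and the good-edge requirement is the delicate one, since a $5$-vertex closed neighborhood colored with only $4$ colors necessarily uses a proper subset of the palette, and the subsets used by two adjacent vertices can easily coincide unless the coloring is designed with care. A brief computer search---or a hand search guided by a periodic diagonal pattern---produces suitable colorings for all four pairs; the figures record them, and the orbit reduction above keeps the verification at a size that can be presented in full.
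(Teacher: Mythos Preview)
Your proposal is correct and follows essentially the same approach as the paper: the lower bound comes from Lemma~\ref{lem-bipartite-triangle}, and the upper bound is established by exhibiting explicit $4$-lid-colorings of each of the four products in figures. The paper's proof is terser---it simply cites the lower-bound lemma and points to the four figures without your additional remarks on why Lemma~\ref{lem-tensor-cc-1} and Theorem~\ref{th-tensor-gen} do not already cover these cases, and without the orbit-reduction discussion for the verification---but the substance is identical.
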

\begin{proof}
    From Lemma~\ref{lem-bipartite-triangle} we know that $\chi_{lid}(C_m \times C_n) \geq 4$. We have given $4$-lid-colorings of $C_m \times C_n$ for $(m,n) \in \{(3,7),(5,5),(5,7),(7,7)\} $ in Fig~\ref{37}, Fig~\ref{55}, Fig~\ref{57} and Fig~\ref{77} respectively. \qed
    \end{proof}

\begin{lemma}\label{lem-tensor-base}
    $\chi_{lid}(C_3 \times C_3)=\chi_{lid}(C_3 \times C_5)=5$.
\end{lemma}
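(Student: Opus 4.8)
The plan is to prove matching upper and lower bounds for both graphs. For the upper bounds I would exhibit explicit $5$-lid-colorings of $C_3 \times C_3$ and $C_3 \times C_5$ in figures and check, edge by edge, that they are proper and that every edge is good; this gives $\chi_{lid}(C_3 \times C_3) \le 5$ and $\chi_{lid}(C_3 \times C_5) \le 5$. For the matching lower bounds, note first that both $C_3 \times C_3$ and $C_3 \times C_5$ are connected (Lemma~\ref{lem-tens1}, since $C_3$ is non-bipartite), are not a triangle, and are not bipartite (the first contains a triangle, the second contains a $5$-cycle), so Lemma~\ref{lem-bipartite-triangle} already gives $\chi_{lid} \ge 4$ in both cases; the work is in improving this to $\ge 5$.

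For $C_3 \times C_3$ the improvement mimics Case~$1$ of Theorem~\ref{th-CmPn}. Writing $V(C_3)=\{u_1,u_2,u_3\}$ and $V(C_3)=\{v_1,v_2,v_3\}$ for the two factors, the set $T=\{(u_1,v_1),(u_2,v_2),(u_3,v_3)\}$ induces a triangle in $C_3 \times C_3$ (each pair is adjacent since $C_3$ is a triangle), and by Lemma~\ref{lem-tensor-nbhd} its three vertices have pairwise distinct closed neighborhoods. In any proper $4$-coloring these three vertices receive three distinct colors, say $1,2,3$, so for every $w \in T$ we have $\{1,2,3\} \subseteq f(N[w]) \subseteq \{1,2,3,4\}$; hence each $f(N[w])$ can be only one of two sets, while a lid-coloring forces the three sets $f(N[w])$, $w\in T$, to be pairwise distinct — a contradiction. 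Thus $\chi_{lid}(C_3 \times C_3) \ge 5$, and with the upper bound $\chi_{lid}(C_3\times C_3)=5$.

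The crux is the lower bound $\chi_{lid}(C_3 \times C_5) \ge 5$, because $C_3 \times C_5$ is triangle-free and the argument above does not apply. I would assume a $4$-lid-coloring $f$ exists (it must use all four colors, since by Theorem~\ref{lem-regular} the $4$-regular non-bipartite graph $C_3 \times C_5$ is not $3$-lid-colorable) and derive a contradiction. The key simplification is that, since the graph is triangle-free, every edge $uv$ satisfies $N[u]\cap N[v]=\{u,v\}$ and $f(N[u]) = \{f(u),f(v)\}\cup\bigl(f(N(u))\cap\{p,q\}\bigr)$, where $\{p,q\}=[4]\setminus\{f(u),f(v)\}$; consequently the lid condition on $uv$ (valid by Lemma~\ref{lem-tensor-nbhd}) is exactly
\[
 f(N(u))\cap\{p,q\}\;\neq\;f(N(v))\cap\{p,q\},
\]
i.e.\ some color missing from $\{f(u),f(v)\}$ occurs in the neighborhood of exactly one of $u$ and $v$. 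Using the symmetric structure of $C_3 \times C_5$ (it is vertex- and edge-transitive, has only $15$ vertices and diameter $3$, and its edge set splits into three $10$-cycles, one for each pair of the layers $\{i\}\times V(C_5)$), I would fix $f$ on a chosen vertex and its neighborhood up to symmetry, propagate the constraint above through the few distance-two and distance-three vertices, and show that every branch forces two adjacent vertices to violate it. The main obstacle is keeping this finite case analysis short; the reformulation in terms of the ``missing-color profiles'' of neighborhoods, together with the explicit second-neighborhood structure, is what makes the bookkeeping manageable. Combining the lower bounds with the explicit colorings gives $\chi_{lid}(C_3 \times C_3)=\chi_{lid}(C_3 \times C_5)=5$.
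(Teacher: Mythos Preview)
Your proposal is correct and, at the level of the overall scheme (explicit $5$-lid-colorings for the upper bounds, finite case analysis for the lower bounds), it matches what the paper does. The paper's own proof is extremely terse: it points to the figures for the upper bounds and states only that the lower bounds were ``found by performing a tedious case by case analysis'', with no further detail for either graph.

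Where you genuinely differ is the lower bound for $C_3\times C_3$. Your pigeonhole argument on the triangle $T=\{(u_1,v_1),(u_2,v_2),(u_3,v_3)\}$ is a clean conceptual improvement over the paper's unspecified case analysis: once $T\subseteq N[w]$ for every $w\in T$ and Lemma~\ref{lem-tensor-nbhd} makes the three closed neighborhoods pairwise distinct, a $4$-coloring leaves only two candidate sets $\{1,2,3\}$ and $\{1,2,3,4\}$ for three values of $f(N[w])$ that must be pairwise distinct. This is shorter, reusable, and avoids any branching. For $C_3\times C_5$ you are on the same footing as the paper --- neither of you actually carries out the case analysis --- but your reformulation of the lid condition on an edge $uv$ as $f(N(u))\cap\{p,q\}\neq f(N(v))\cap\{p,q\}$ (valid because the graph is triangle-free) and the use of vertex- and edge-transitivity to normalize the first branch are reductions the paper does not record and would materially shorten a written-out verification.
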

\begin{proof}
    We have given a $5$-lid-coloring of $C_3 \times C_3$ and $C_3 \times C_5$ in Fig~\ref{35}. We found
   that $\chi_{lid}(C_3 \times C_3)=\chi_{lid}(C_3 \times C_5) = 5$ by performing a tedious case by case analysis. \qed
\end{proof}

\begin{theorem}
Let $m$ and $n$ be two positive integers such that $3 \leq m \leq n$. Then we have
\[
    \chi_{lid}(C_m \times C_n)= 
\begin{cases}
    5& \text{if $m=3$ and $n \in \{3,5\}$;}\\
   4 & \text{if $m = 3$ and $n=7$;}\\
    4& \text{if $m \in \{5,7\}$ and $n \in \{5,7\}$;}\\
    4& \text{if $m \geq 9$, $m$ is odd and $n \geq 3$, $n$ is odd;}\\
    3& \text{otherwise}
\end{cases}
\]
\end{theorem}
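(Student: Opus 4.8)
The plan is to derive the theorem by assembling the lemmas of this subsection, organizing the argument around the parities of $m$ and $n$ and using the commutativity of the tensor product to reduce asymmetric pairs to ones already treated.

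First I would dispose of the case in which at least one of $m$ and $n$ is even: here Lemma~\ref{lem-tensor-even} applies verbatim and gives $\chi_{lid}(C_m \times C_n)=3$. So assume from now on that both $m$ and $n$ are odd. Since neither $C_m$ nor $C_n$ is bipartite, Lemma~\ref{lem-tens1} shows that $C_m \times C_n$ is connected; it has $mn\geq 9$ vertices, so it is not a triangle, and it is not bipartite, hence Lemma~\ref{lem-bipartite-triangle} yields the uniform lower bound $\chi_{lid}(C_m \times C_n)\geq 4$. It then remains to match this with the stated upper bounds. For $(m,n)\in\{(3,3),(3,5)\}$, Lemma~\ref{lem-tensor-base} evaluates the parameter to $5$. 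For $(m,n)\in\{(3,7),(5,5),(5,7),(7,7)\}$, Lemma~\ref{lem-tensor-cc-2} supplies a $4$-lid-coloring, so together with the lower bound the value equals $4$. For every remaining pair of odd integers with $3\leq m\leq n$ one has $\max(m,n)\geq 9$; since the tensor product is commutative we may assume the factor of length at least $9$ sits in the first coordinate, and then Lemma~\ref{lem-tensor-cc-1} gives a $4$-lid-coloring, so again the value equals $4$. These sub-cases exhaust all pairs of odd integers with $3\leq m\leq n$, completing the case analysis.

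The only point requiring care is checking exhaustiveness of the split: once $m$ and $n$ are both odd, every pair with $3\leq m\leq n$ other than the six small exceptions $(3,3),(3,5),(3,7),(5,5),(5,7),(7,7)$ satisfies $\max(m,n)\geq 9$ and hence falls under Lemma~\ref{lem-tensor-cc-1} after possibly swapping the two factors. No genuine difficulty remains at this stage: the substantive work---the explicit colorings underlying Lemmas~\ref{lem-tensor-even}, \ref{lem-tensor-cc-1} and \ref{lem-tensor-cc-2}, and in particular the exhaustive case-by-case verification that $\chi_{lid}(C_3 \times C_3)=\chi_{lid}(C_3 \times C_5)=5$ rather than $4$ in Lemma~\ref{lem-tensor-base}---has already been carried out, and the proof of the theorem is simply the bookkeeping that glues these results together.
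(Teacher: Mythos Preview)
Your proposal is correct and follows essentially the same approach as the paper, whose proof simply states that the result follows from Lemmas~\ref{lem-tensor-even}, \ref{lem-tensor-cc-1}, \ref{lem-tensor-cc-2}, and \ref{lem-tensor-base}. You spell out the case split more carefully and make explicit the appeal to commutativity needed to bring odd pairs with $m\leq 7<9\leq n$ under Lemma~\ref{lem-tensor-cc-1}, but this is exactly the bookkeeping the paper's one-line proof leaves implicit.
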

\begin{proof}
    The proof follows from the results of Lemmas~\ref{lem-tensor-even},~\ref{lem-tensor-cc-1},~\ref{lem-tensor-cc-2},~\ref{lem-tensor-base}.  \qed
\end{proof}

\bibliographystyle{plain}
\bibliography{ref.bib}

\begin{thebibliography}{10}

\bibitem{esperet2010locally}
Louis Esperet, Sylvain Gravier, Mickael Montassier, Pascal Ochem, and Aline
  Parreau.
\newblock {Locally identifying coloring of graphs}.
\newblock {\em {The Electronic Journal of Combinatorics}}, 19(2):40, 2012.

\bibitem{foucaud2012locally}
Florent Foucaud, Iiro Honkala, Tero Laihonen, Aline Parreau, and Guillem
  Perarnau.
\newblock Locally identifying colourings for graphs with given maximum degree.
\newblock {\em Discrete Mathematics}, 312(10):1832--1837, 2012.

\bibitem{geller1975chromatic}
Dennis Geller and Saul Stahl.
\newblock The chromatic number and other functions of the lexicographic
  product.
\newblock {\em Journal of Combinatorial Theory, Series B}, 19(1):87--95, 1975.

\bibitem{gonccalves2013locally}
Daniel Gon{\c{c}}alves, Aline Parreau, and Alexandre Pinlou.
\newblock Locally identifying coloring in bounded expansion classes of graphs.
\newblock {\em Discrete Applied Mathematics}, 161(18):2946--2951, 2013.

\bibitem{hammack2011handbook}
Richard Hammack, Wilfried Imrich, and Sandi Klav{\v{z}}ar.
\newblock {\em Handbook of product graphs}.
\newblock CRC press, 2011.

\bibitem{martins2018locally}
Nicolas Martins and Rudini Sampaio.
\newblock Locally identifying coloring of graphs with few {P}4s.
\newblock {\em Theoretical Computer Science}, 707:69--76, 2018.

\bibitem{sabidussi1957graphs}
Gert Sabidussi.
\newblock Graphs with given group and given graph-theoretical properties.
\newblock {\em Canadian Journal of Mathematics}, 9:515--525, 1957.

\bibitem{shitov2019counterexamples}
Yaroslav Shitov.
\newblock Counterexamples to {H}edetniemi's conjecture.
\newblock {\em Annals of Mathematics}, 190(2):663--667, 2019.

\bibitem{sylvester1882subvariants}
James~J Sylvester.
\newblock On subvariants, i.e. semi-invariants to binary quantics of an
  unlimited order.
\newblock {\em American Journal of Mathematics}, 5(1):79--136, 1882.

\bibitem{weichsel1962kronecker}
Paul~M Weichsel.
\newblock The kronecker product of graphs.
\newblock {\em Proceedings of the American mathematical society}, 13(1):47--52,
  1962.

\bibitem{west2001introduction}
Douglas~Brent West et~al.
\newblock {\em Introduction to graph theory}, volume~2.
\newblock Prentice hall Upper Saddle River, 2001.

\end{thebibliography}
\newpage
\section{Appendix} \label{appendix}
\subsection{Figures related to the Cartesian product of two odd cycles}

\begin{figure}[H] 
 \captionsetup[subfigure]{justification=centering}
	\begin{subfigure}[t]{0.4\textwidth}
	
		$$
			\includegraphics[trim=7cm 18cm 1cm 4cm, clip=true, scale=0.8]{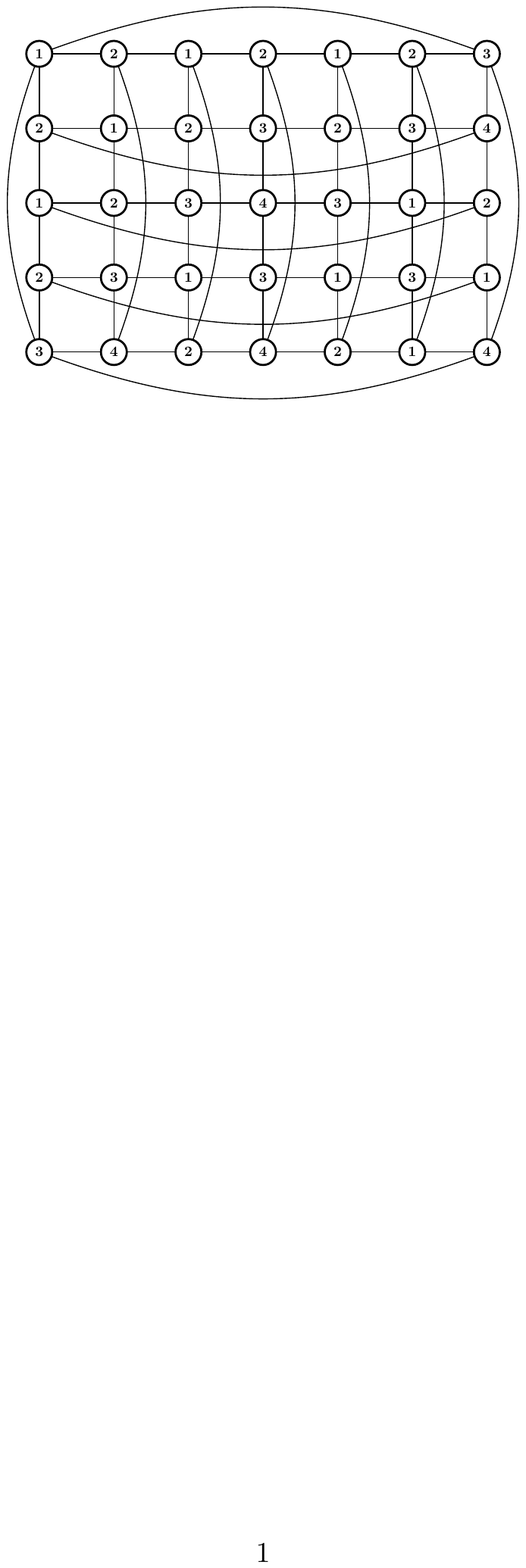}
		$$
	\caption{$C_{5} \square C_7$}
	\label{C4_C4}
	\end{subfigure}
	\begin{subfigure}[t]{0.55\textwidth}
		$$
			\includegraphics[trim=4.5cm 18cm 1cm 4cm, clip=true, scale=0.8]{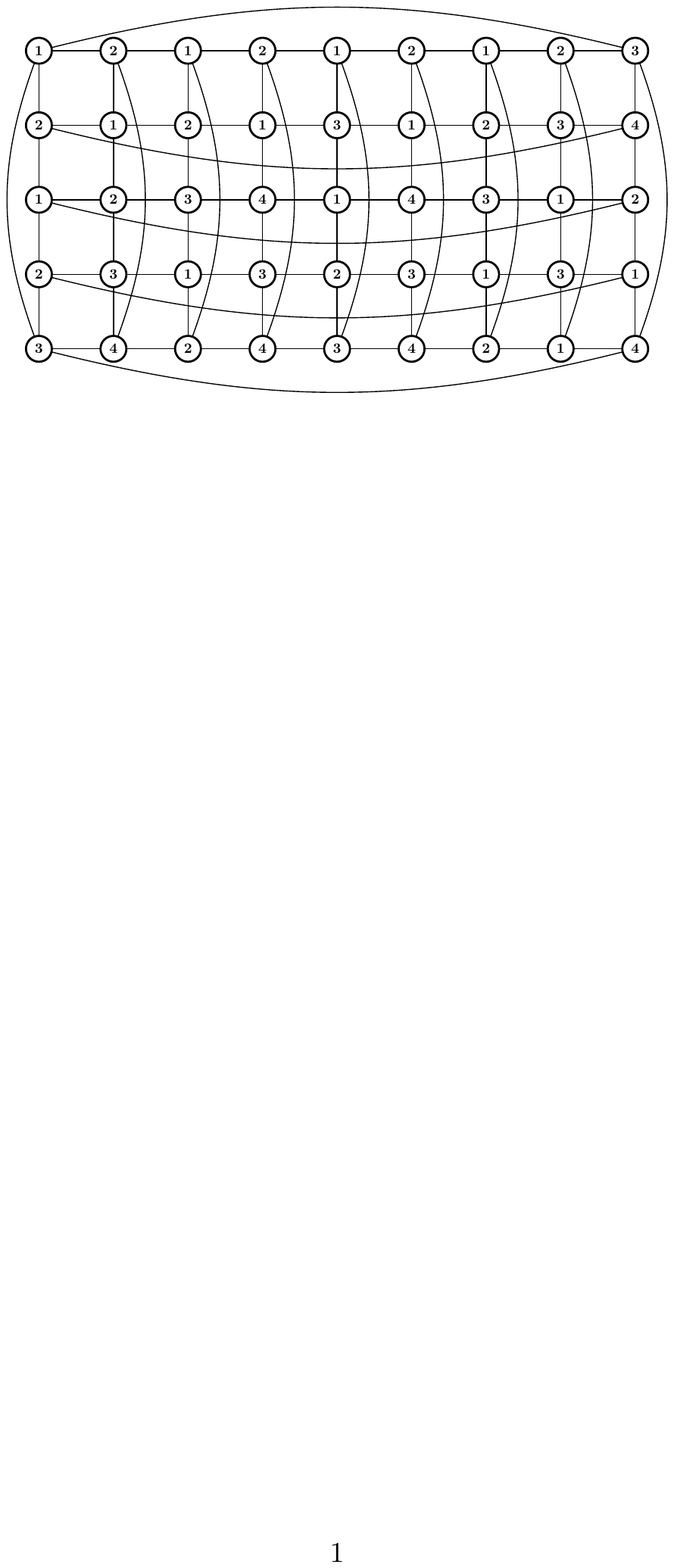}
		$$
	\caption{$C_{5} \square C_9$}
	\label{C4_C5)}
	\end{subfigure}
	\end{figure}
 
 \begin{figure}
	\begin{subfigure}[t]{1\textwidth}
	
		$$
			\includegraphics[trim=3.5cm 18cm 1cm 3cm, clip=true, scale=0.9]{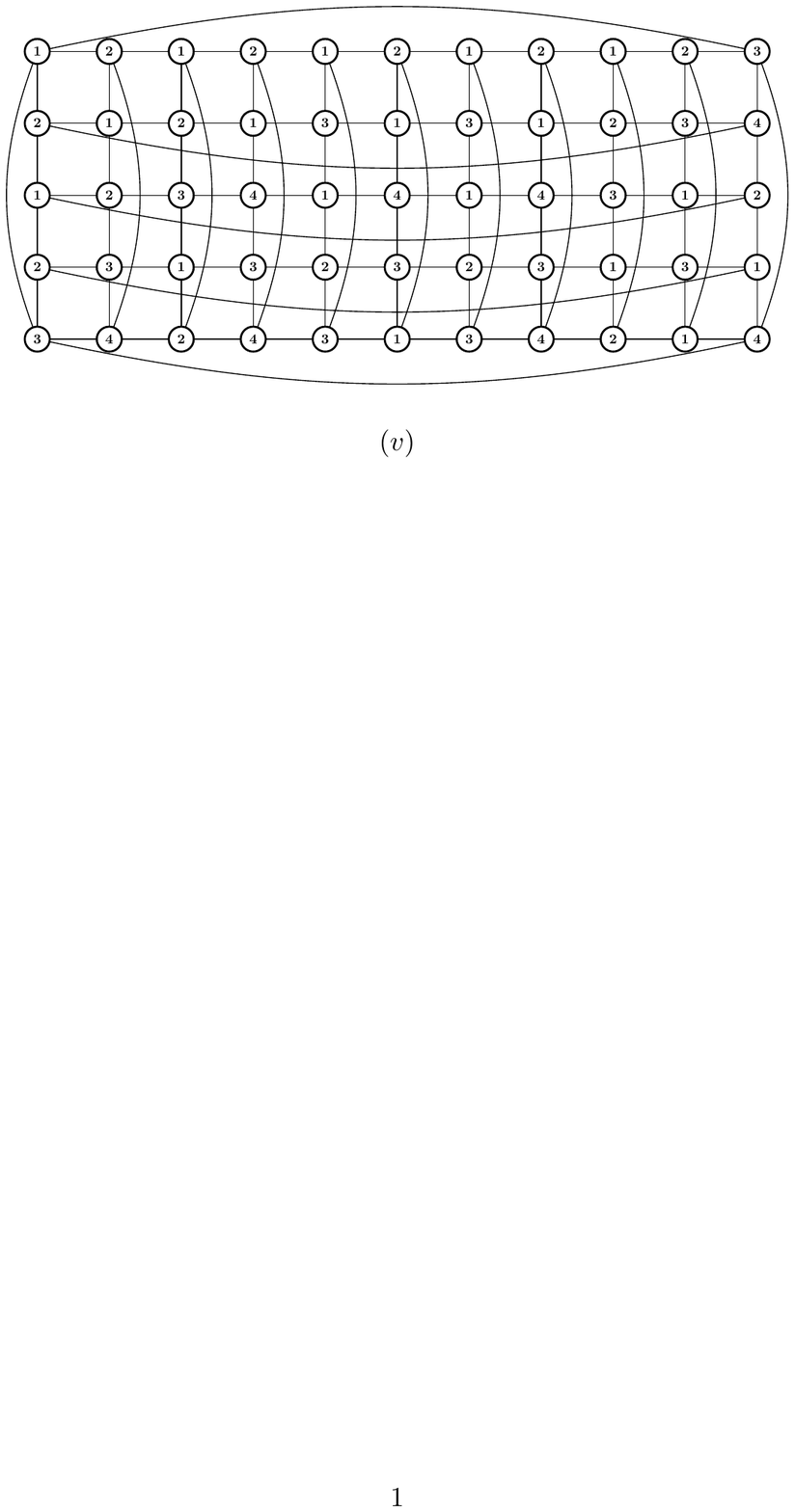}
		$$
	\caption{$C_{5} \square C_{11}$}
	\label{C5_C11}
	\end{subfigure}

	\caption{~ $4$-lid colorings of $C_5 \square C_n$, $n \in \{7,9,11\}$.}
	\label{fig-C5}
\end{figure}


\begin{figure}[H] 
 \captionsetup[subfigure]{justification=centering}

 \vspace{-3cm}

\begin{subfigure}[t]{0.4\textwidth}
		$$
			\includegraphics[trim=8cm 17.3cm 1.5cm 3.9cm, clip=true, scale=0.8]{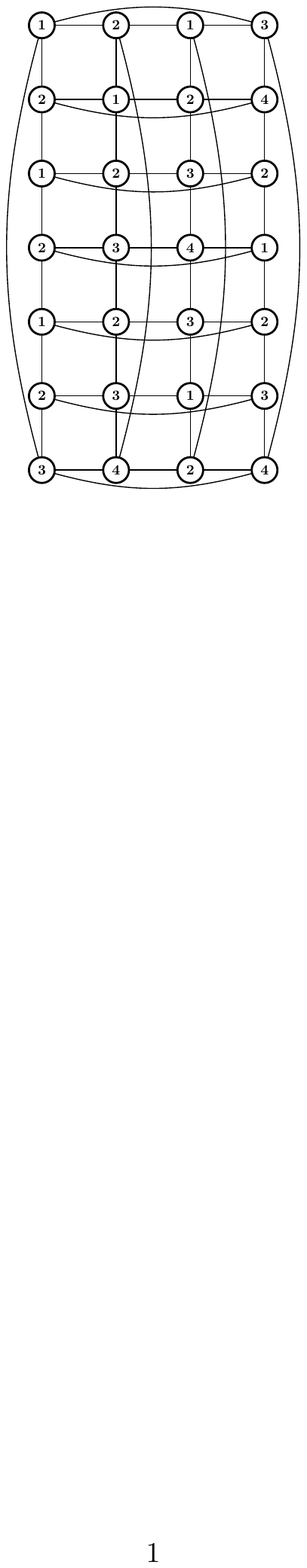}
		$$
	\caption{$C_7 \square C_4$}	
 \label{C7C4}
	\end{subfigure}
	\begin{subfigure}[t]{0.4\textwidth}
	
		$$
			\includegraphics[trim=6.7cm 17.3cm 2cm 3.9cm, clip=true, scale=0.8]{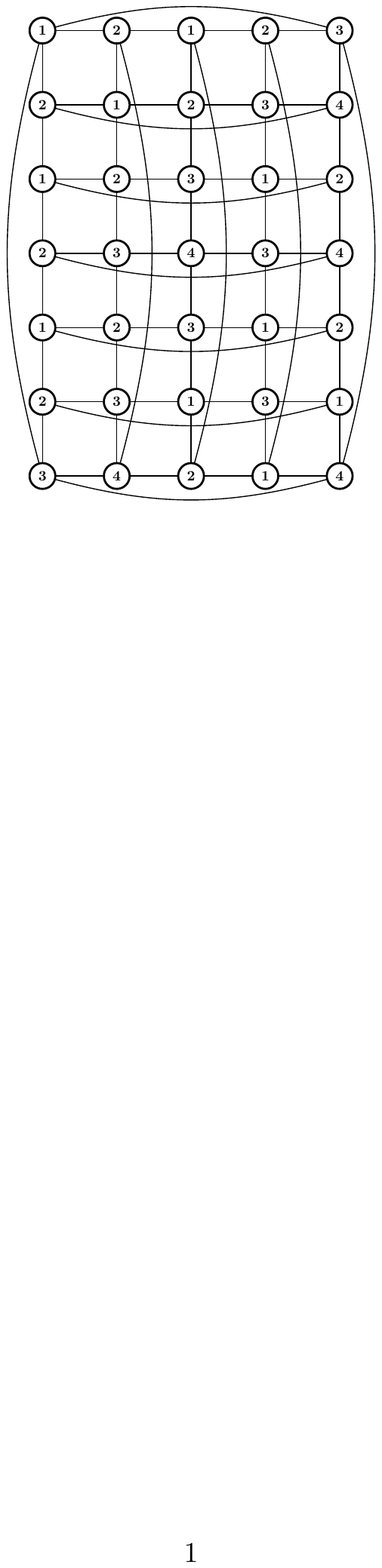}
		$$
	\caption{$C_7 \square C_5$}
	\label{C7C5}
	\end{subfigure}

\vspace{-0.5cm}

	\begin{subfigure}[t]{0.4\textwidth}
	
		$$
			\includegraphics[trim=7.2cm 16.5cm 1cm 2.5cm, clip=true, scale=0.7]{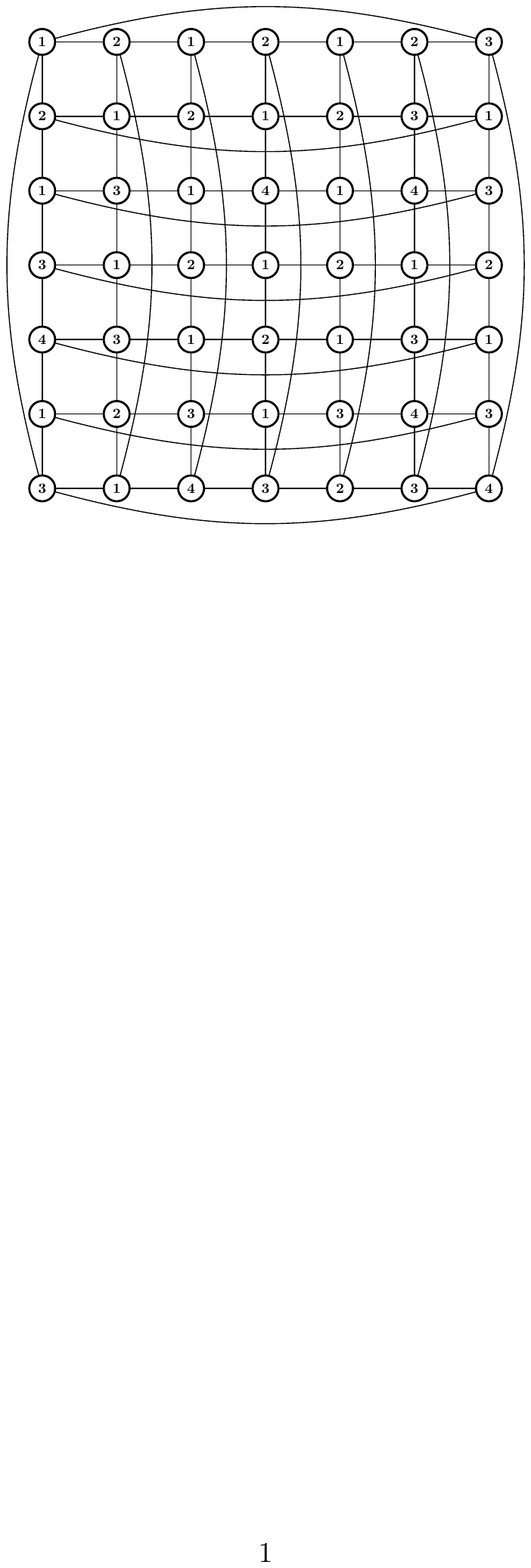}
		$$
	\caption{$C_{7} \square C_7$}
	\label{C7_C7}
	\end{subfigure}
	\begin{subfigure}[t]{0.8\textwidth}
		$$
			\includegraphics[trim=4.5cm 16.5cm 1cm 2.5cm, clip=true, scale=0.7]{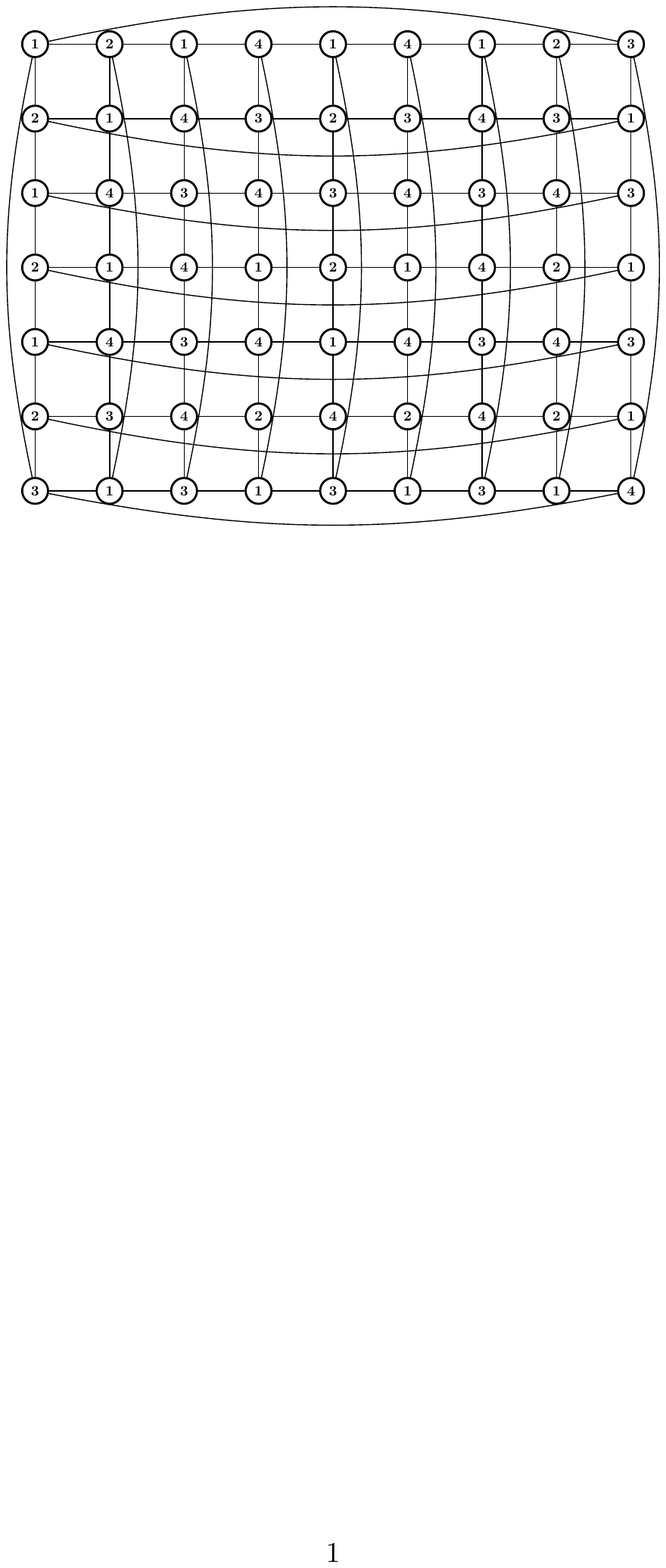}
		$$
	\caption{$C_{7} \square C_9$}
	\label{C7_C9)}
	\end{subfigure}
	
	\vspace{-0.9cm}
	\begin{subfigure}[t]{1\textwidth}
	
		$$
			\includegraphics[trim=3cm 16.5cm 1cm 2.5cm, clip=true, scale=0.8]{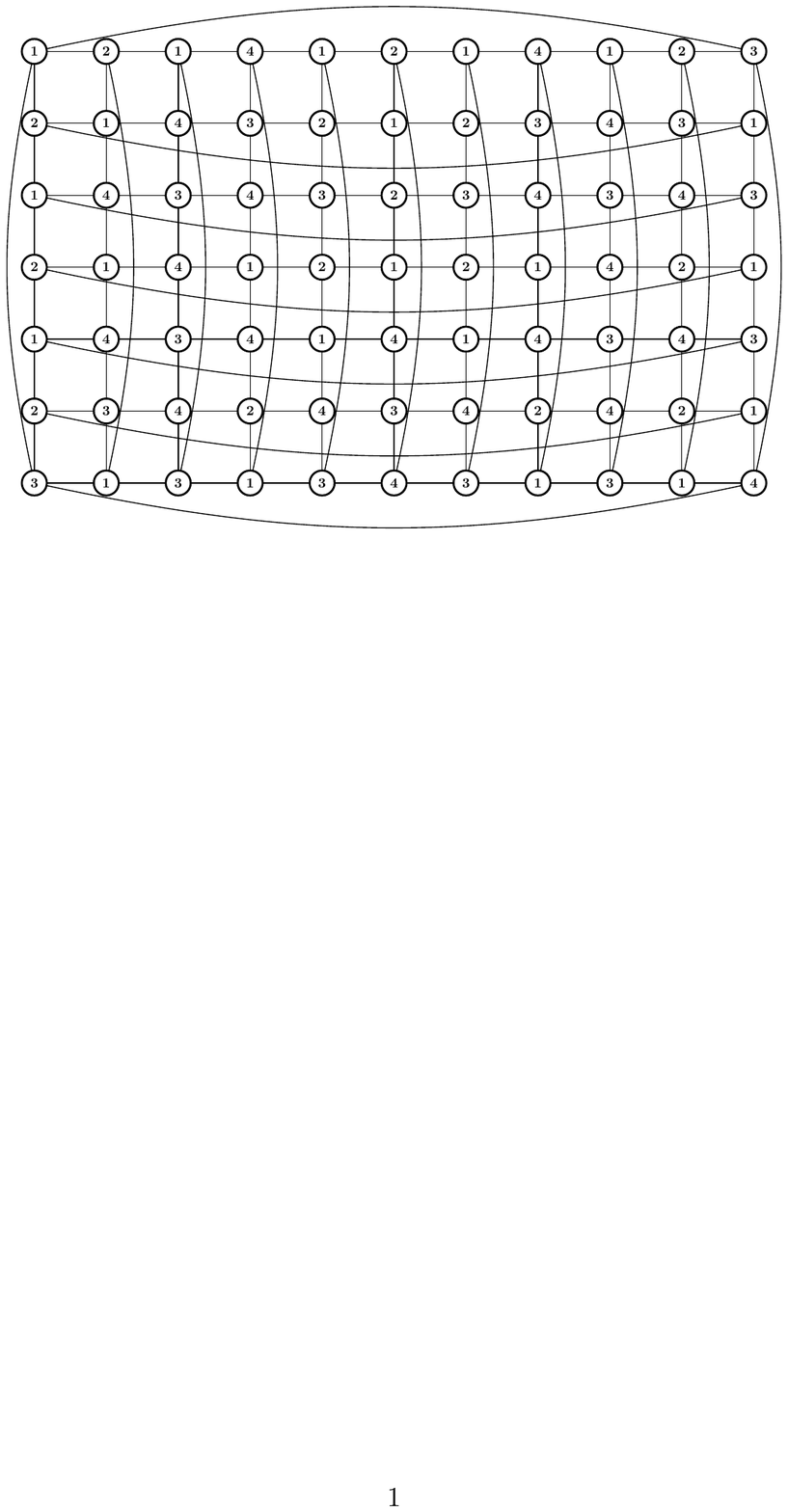}
		$$
	\caption{$C_{7} \square C_{11}$}
	\label{C7_C11}
	\end{subfigure}

	\caption{~ $4$-lid colorings of $C_7 \square C_n$, $n \in \{4,5,7,9,11\}$.}
	\label{fig-C7}
\end{figure}

\begin{figure}[H] 
 \captionsetup[subfigure]{justification=centering}
	\begin{subfigure}[t]{0.4\textwidth}
		$$
			\includegraphics[trim=8cm 15.5cm 1.5cm 3.9cm, clip=true, scale=0.9]{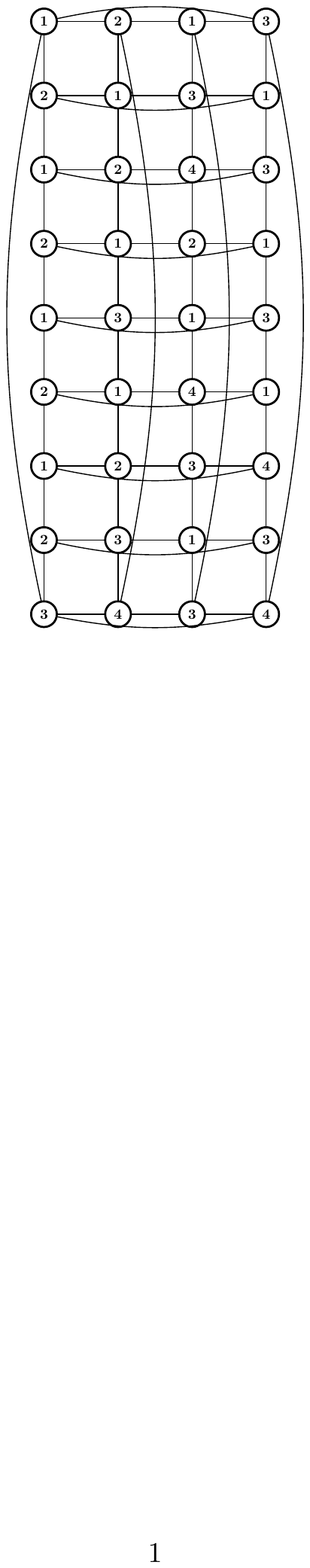}
		$$
	\caption{$C_9 \square C_4$}	
 \label{C9C4}
	\end{subfigure}
	\begin{subfigure}[t]{0.4\textwidth}
	
		$$
			\includegraphics[trim=6.7cm 15.5cm 2cm 3.9cm, clip=true, scale=0.9]{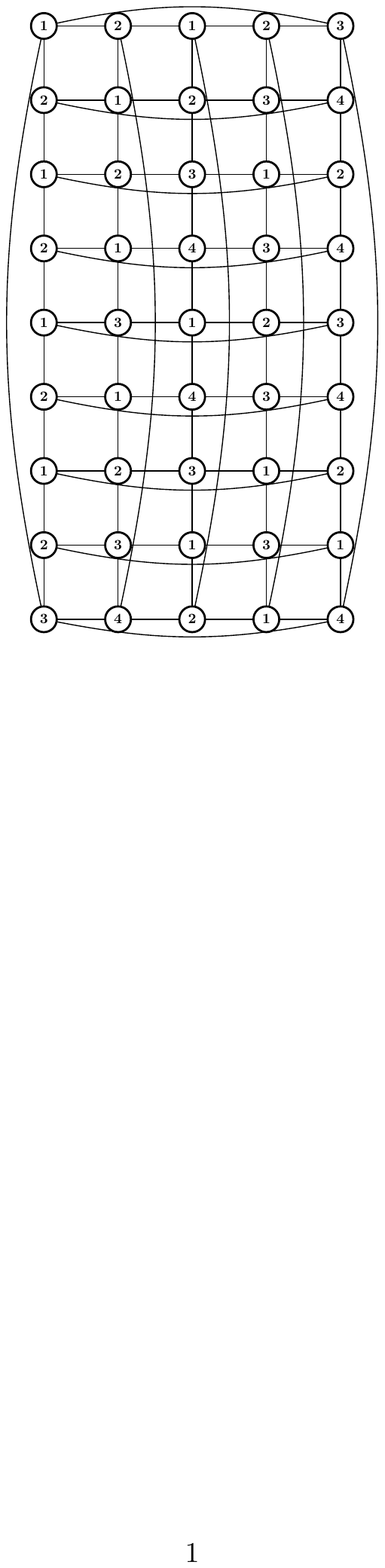}
		$$
	\caption{$C_9 \square C_5$}
	\label{C9C5}
	\end{subfigure}

 \begin{subfigure}[t]{1\textwidth}
	
		$$
			\includegraphics[trim=3cm 14.8cm 1cm 2.5cm, clip=true, scale=0.8]{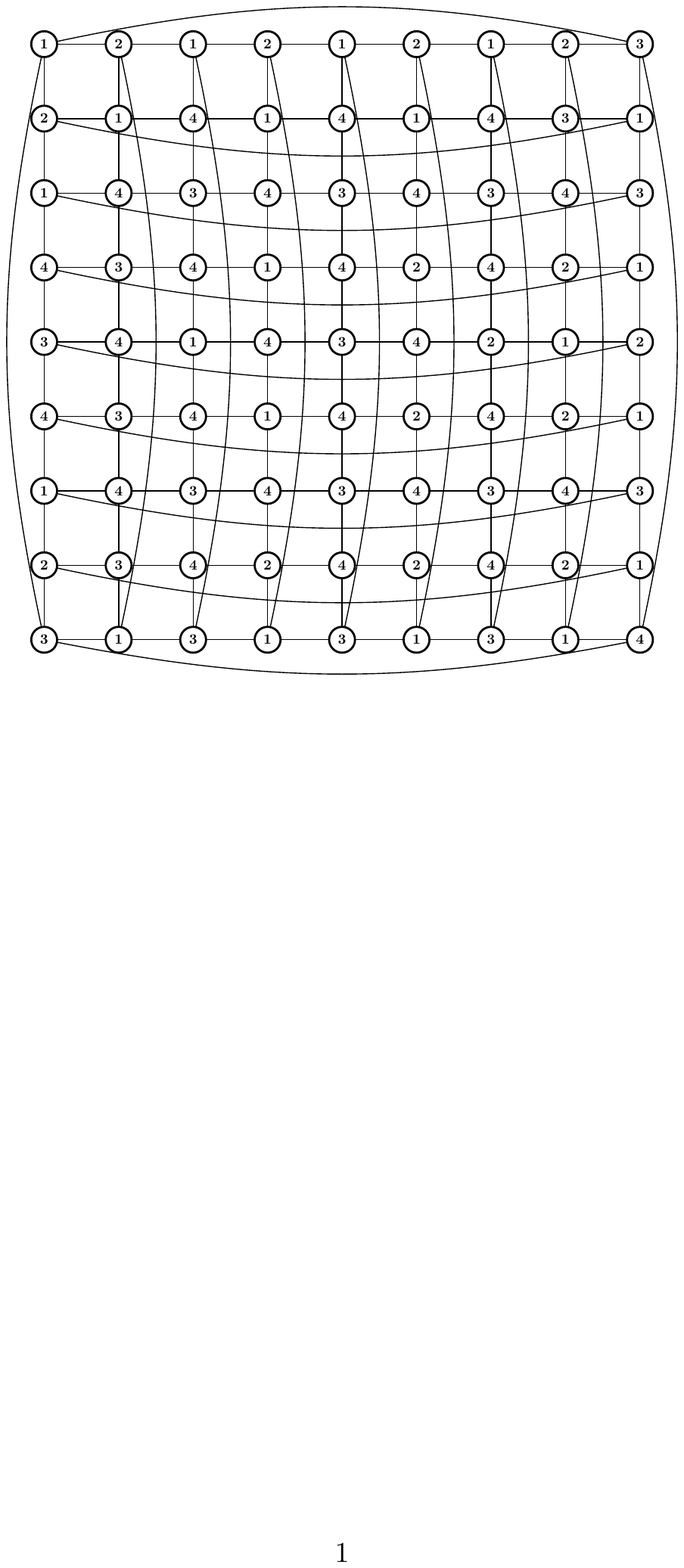}
		$$
	\caption{$C_{9} \square C_9$}
	\label{C9_C9}
	\end{subfigure}
\caption{~ 4-lid-colorings of $C_9 \square C_n$, $n \in \{4,5,9\}$}
 \label{C9CN} 
\end{figure}

\begin{figure}[H] 
 \captionsetup[subfigure]{justification=centering}

\vspace{-1cm}	

	\begin{subfigure}[t]{1\textwidth}
		$$
			\includegraphics[trim=2.5cm 14.8cm 1cm 2.5cm, clip=true, scale=0.9]{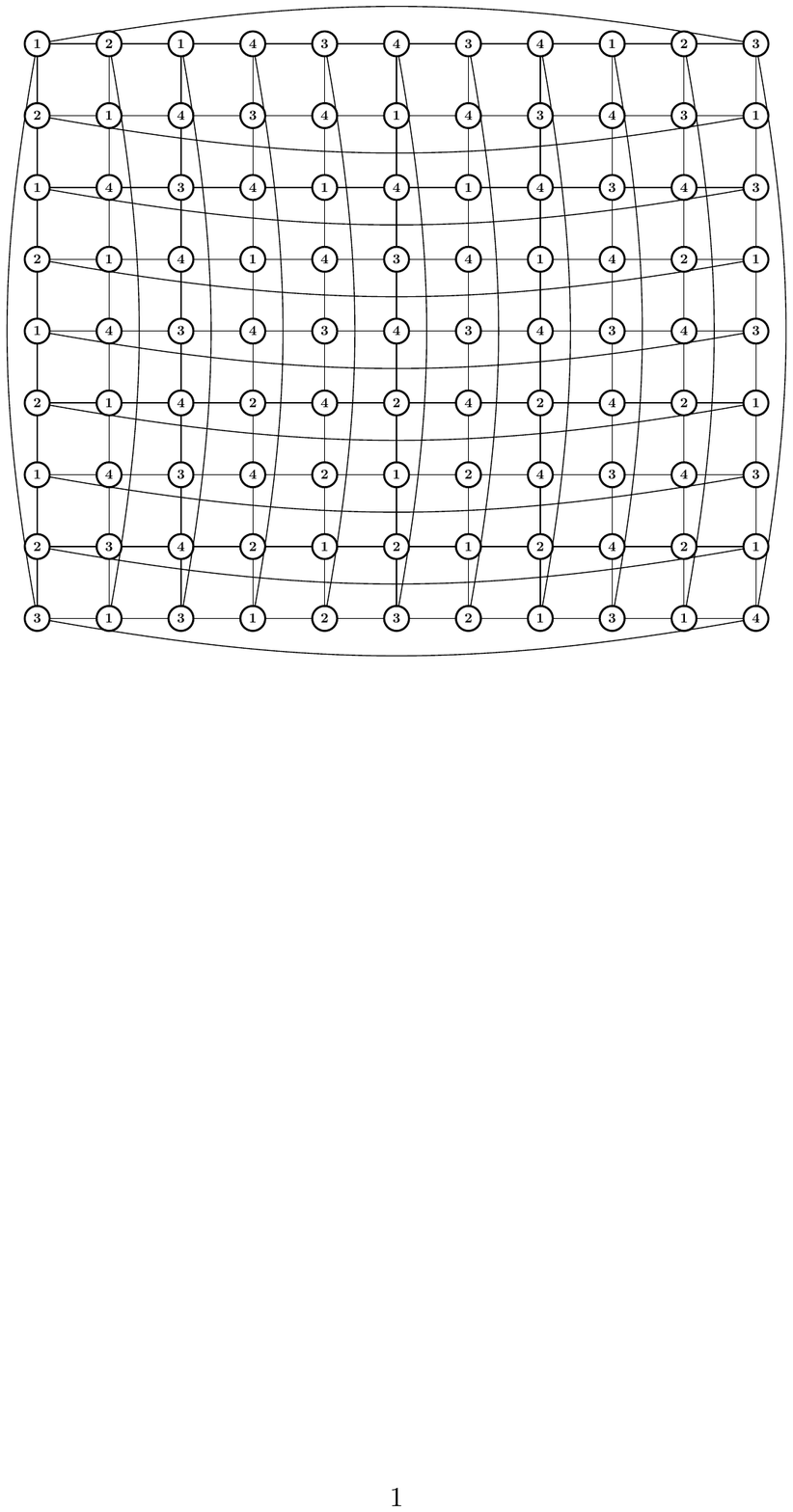}
		$$
	\caption{$C_{9} \square C_{11}$}
	\label{C9_C11)}
	\end{subfigure}
		
	\caption{~A $4$-lid coloring of $C_{9} \square C_{11}$ .}
	\label{fig-C9}
	\end{figure}


\begin{figure}[H] 
 \captionsetup[subfigure]{justification=centering}
 \vspace{-2.5cm}
\begin{subfigure}[t]{0.4\textwidth}
		$$
			\includegraphics[trim=8cm 13cm 1.5cm 3.9cm, clip=true, scale=0.8]{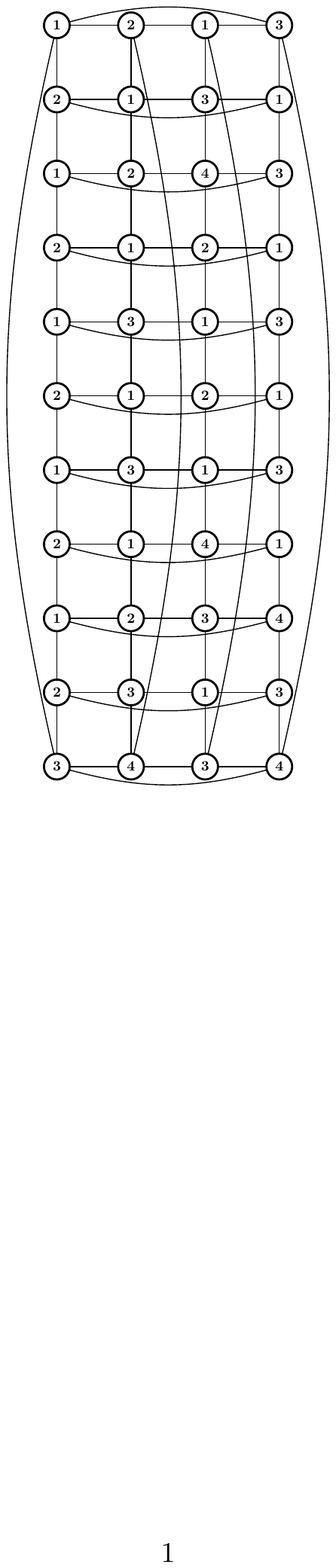}
		$$
	\caption{$C_{11} \square C_4$}	
 \label{C11C4}
	\end{subfigure}
	\begin{subfigure}[t]{0.4\textwidth}
	
		$$
			\includegraphics[trim=6.7cm 13cm 2cm 3.9cm, clip=true, scale=0.8]{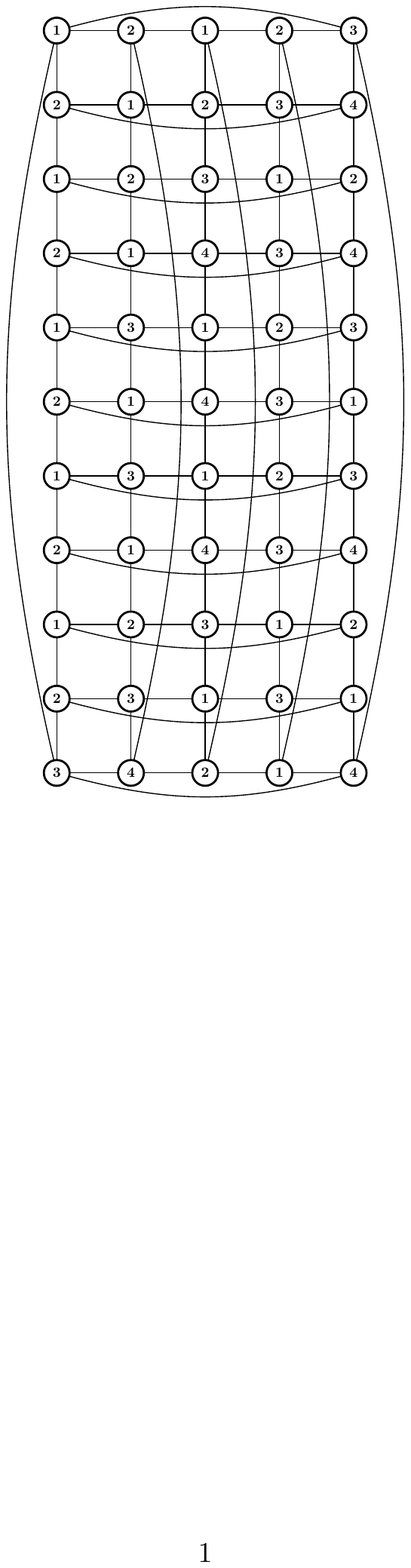}
		$$
	\caption{$C_{11} \square C_5$}
	\label{C11C5}
	\end{subfigure}
\vspace{-0.8cm}
 
	\begin{subfigure}[t]{1\textwidth}
	
		$$
			\includegraphics[trim=4cm 12.5cm 1cm 2.5cm, clip=true, scale=0.8]{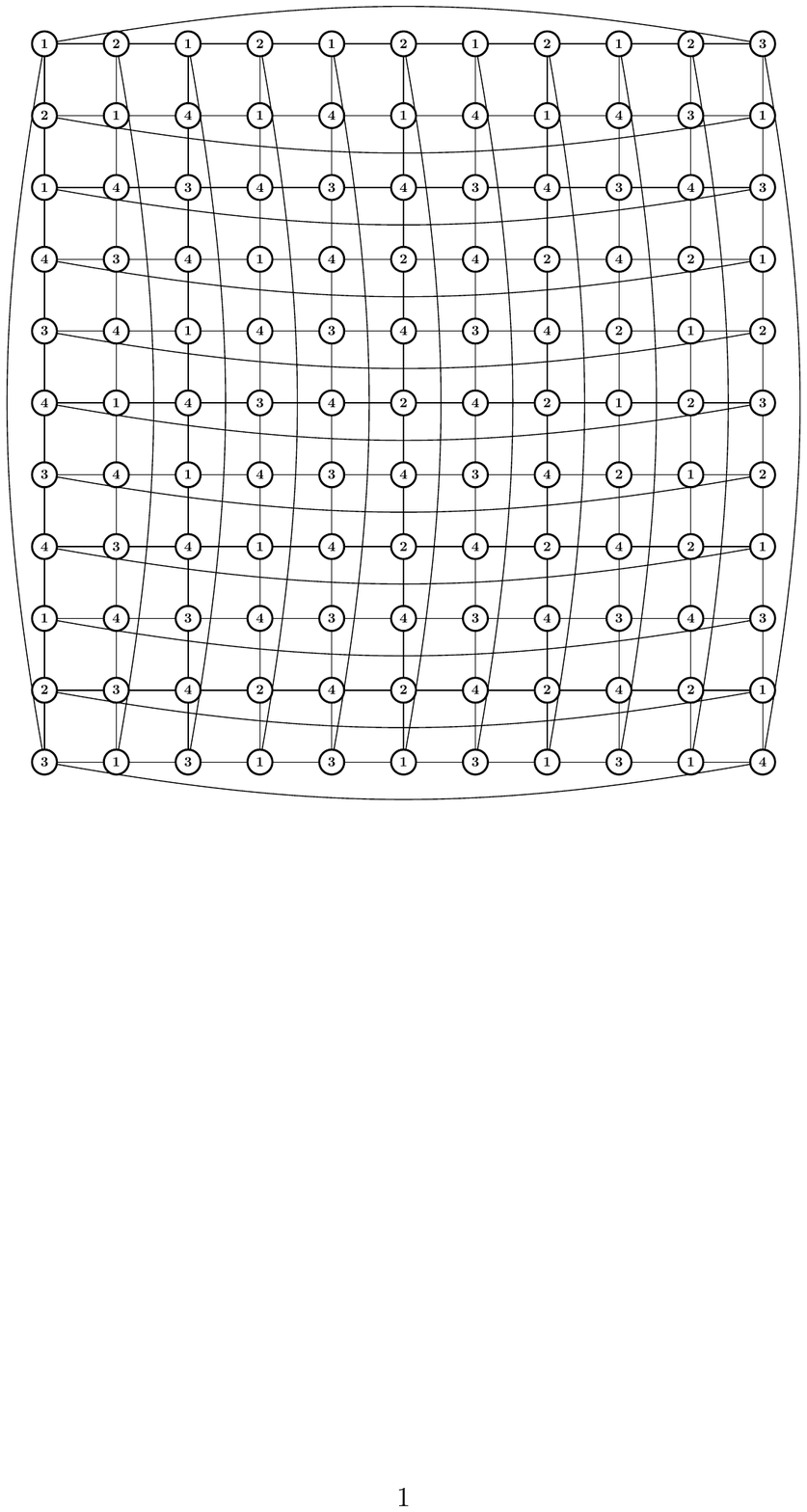}
		$$
	\caption{$C_{11} \square C_{11}$}
	\label{C11_C11}
	\end{subfigure}

	\caption{~ $4$-lid colorings of $C_{11} \square C_n$, $n \in \{4,5,11\}$.}
	\label{fig-C11}
\end{figure}

\subsection
{Figures related to the tensor product of two odd cycles}
\begin{figure}[H] 
 \captionsetup[subfigure]{justification=centering}
	\begin{subfigure}[t]{0.4\textwidth}
    \includegraphics[trim=8cm 19cm 8cm 3cm, clip=true, scale=0.7]{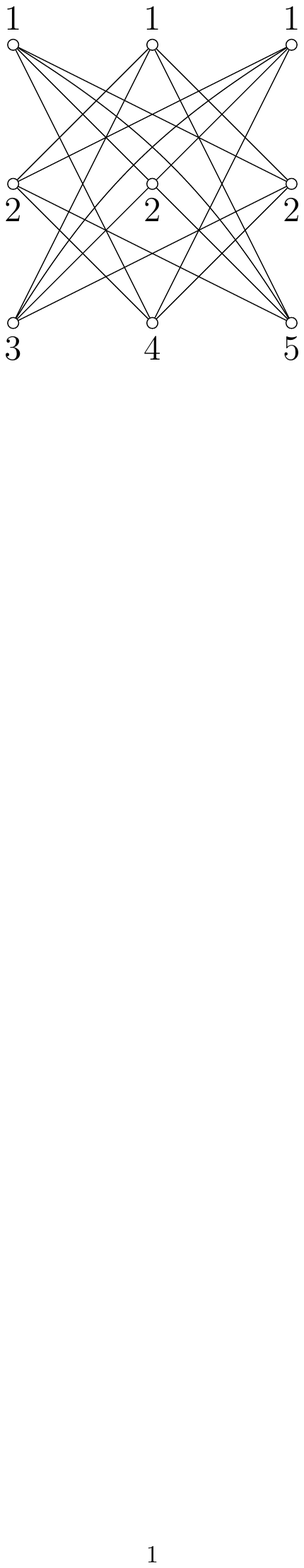}
    \caption{$C_3 \times C_3$}
	\end{subfigure}
\begin{subfigure}[t]{0.55\textwidth}
		\includegraphics[trim=5cm 19cm 5cm 3cm, clip=true, scale=0.7]{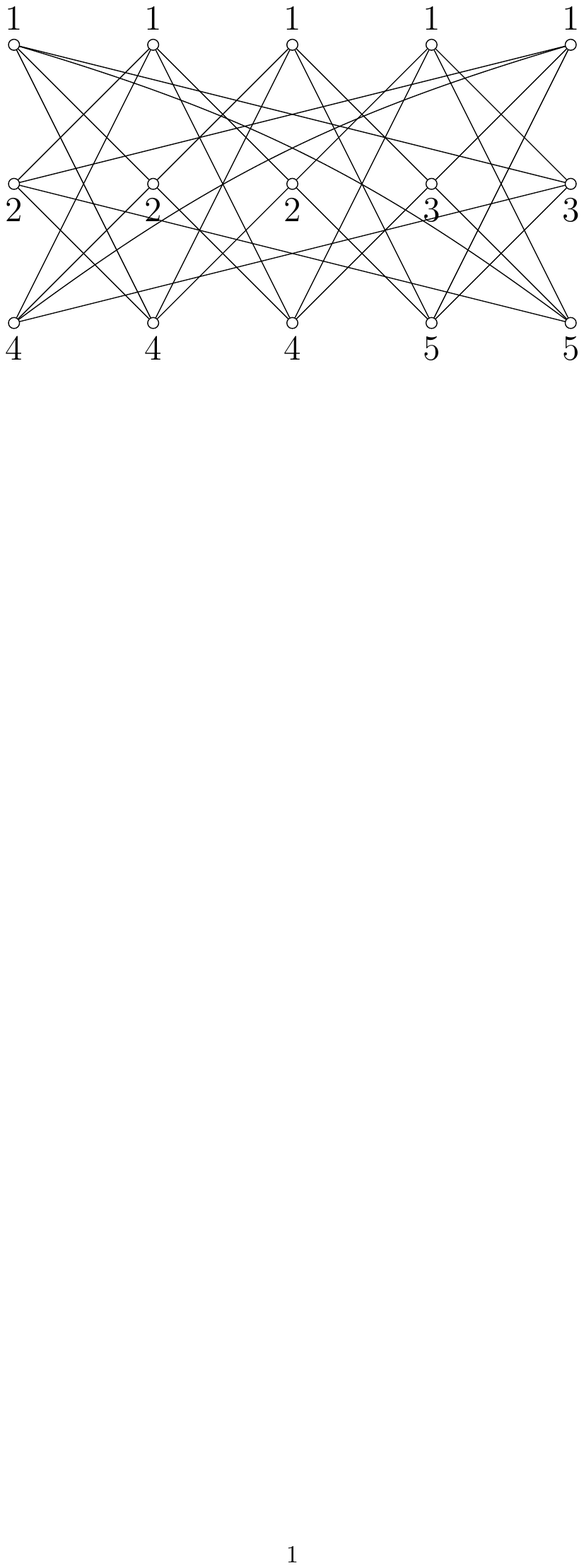}
    \caption{ $C_3 \times C_5$}
    
	\end{subfigure}


 \caption{~ $5$-lid-colorings of $C_3 \times C_3$ and $C_3 \times C_5$}
\label{35}
\end{figure}


\begin{figure}
\includegraphics[trim=2cm 19cm 2cm 1.5cm, clip=true, scale=0.7]{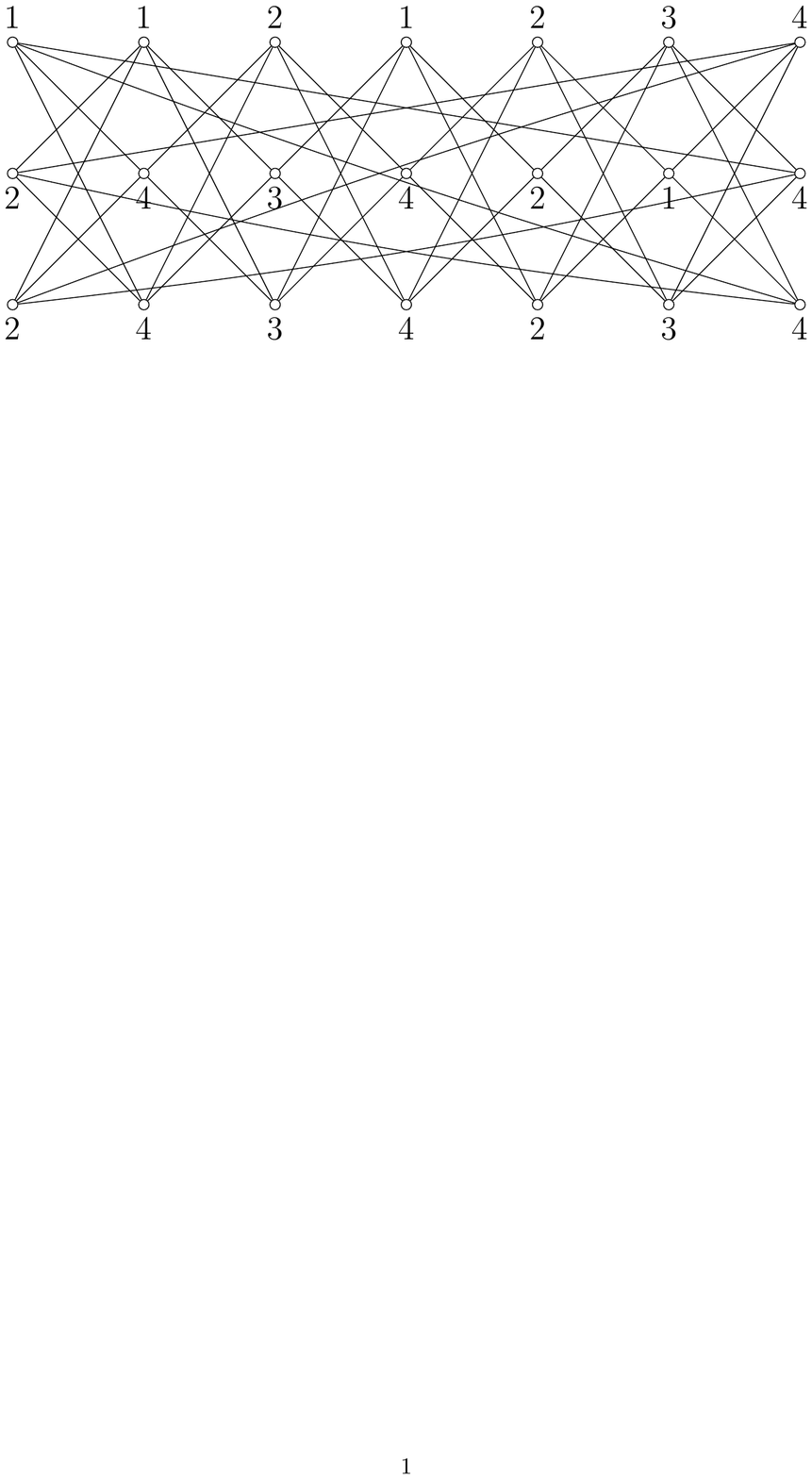}
    \caption{~A $4$-lid-coloring of $C_3 \times C_7$ }
    \label{37}
\end{figure}

\begin{figure}
\includegraphics[trim=2cm 15cm 2cm 1.5cm, clip=true, scale=0.7]{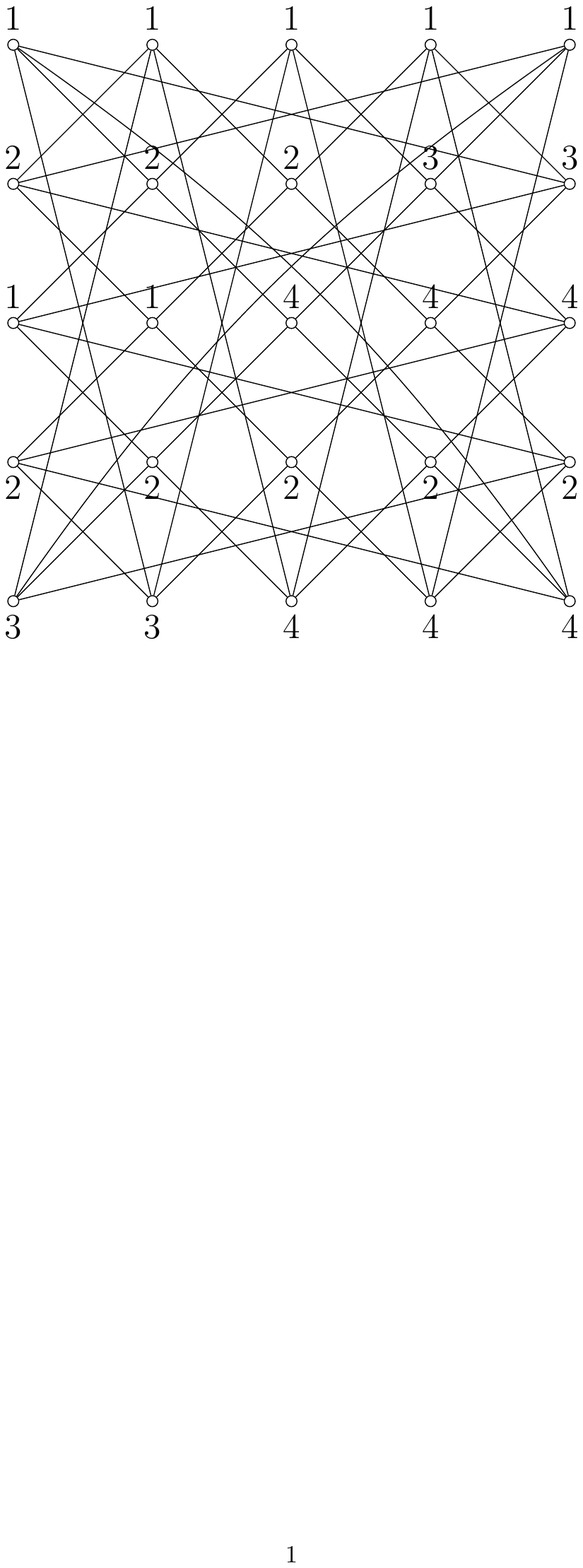}
    \caption{~A $4$-lid-coloring of $C_5 \times C_5$}
    \label{55}	
\end{figure}

\begin{figure}
\includegraphics[trim=2cm 15cm 2cm 1.5cm, clip=true, scale=0.7]{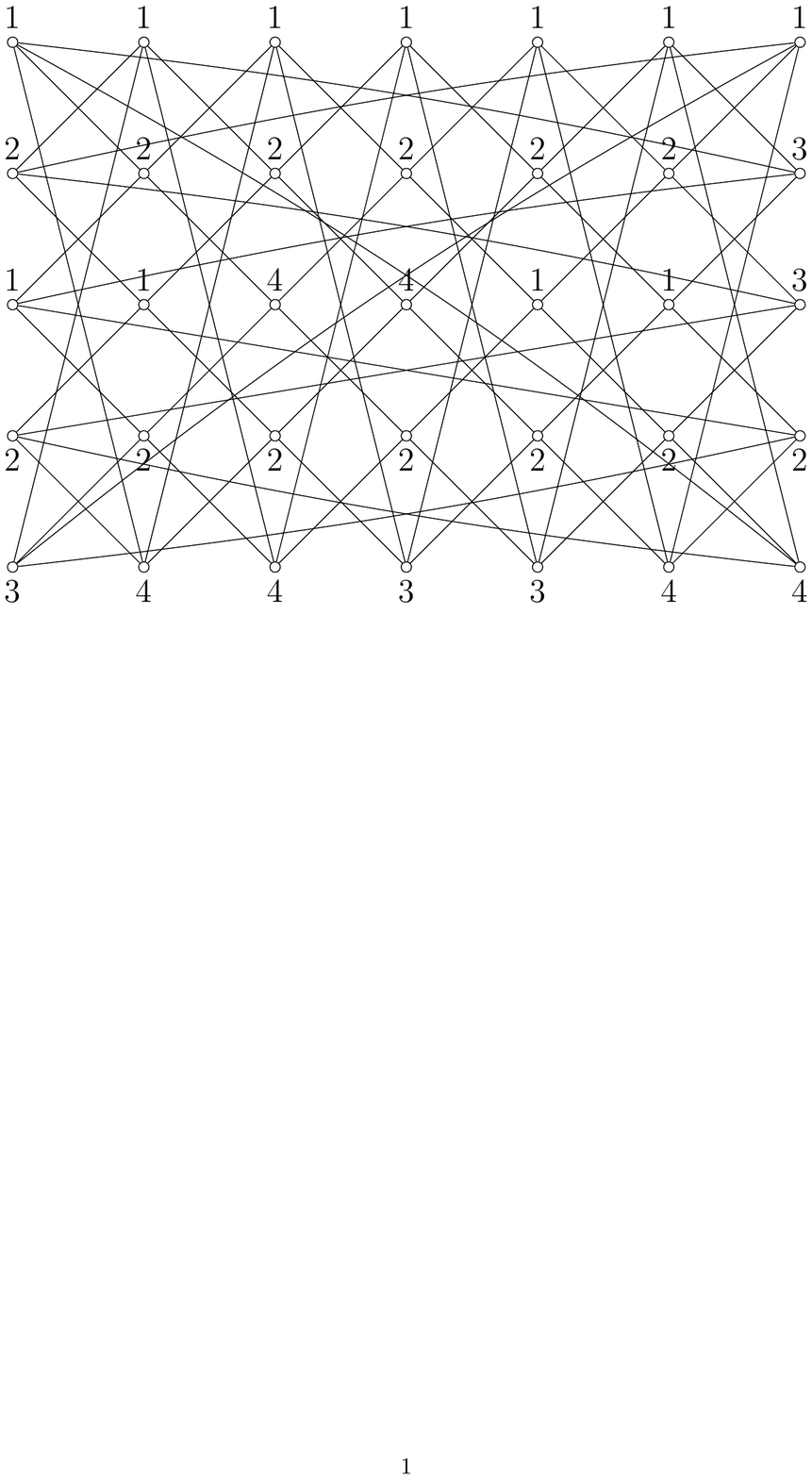}
    \caption{~A $4$-lid-coloring of $C_5 \times C_7$}
    \label{57}
\end{figure}

\begin{figure}
	
		\includegraphics[trim=2cm 11cm 2cm 1.5cm, clip=true, scale=0.7]{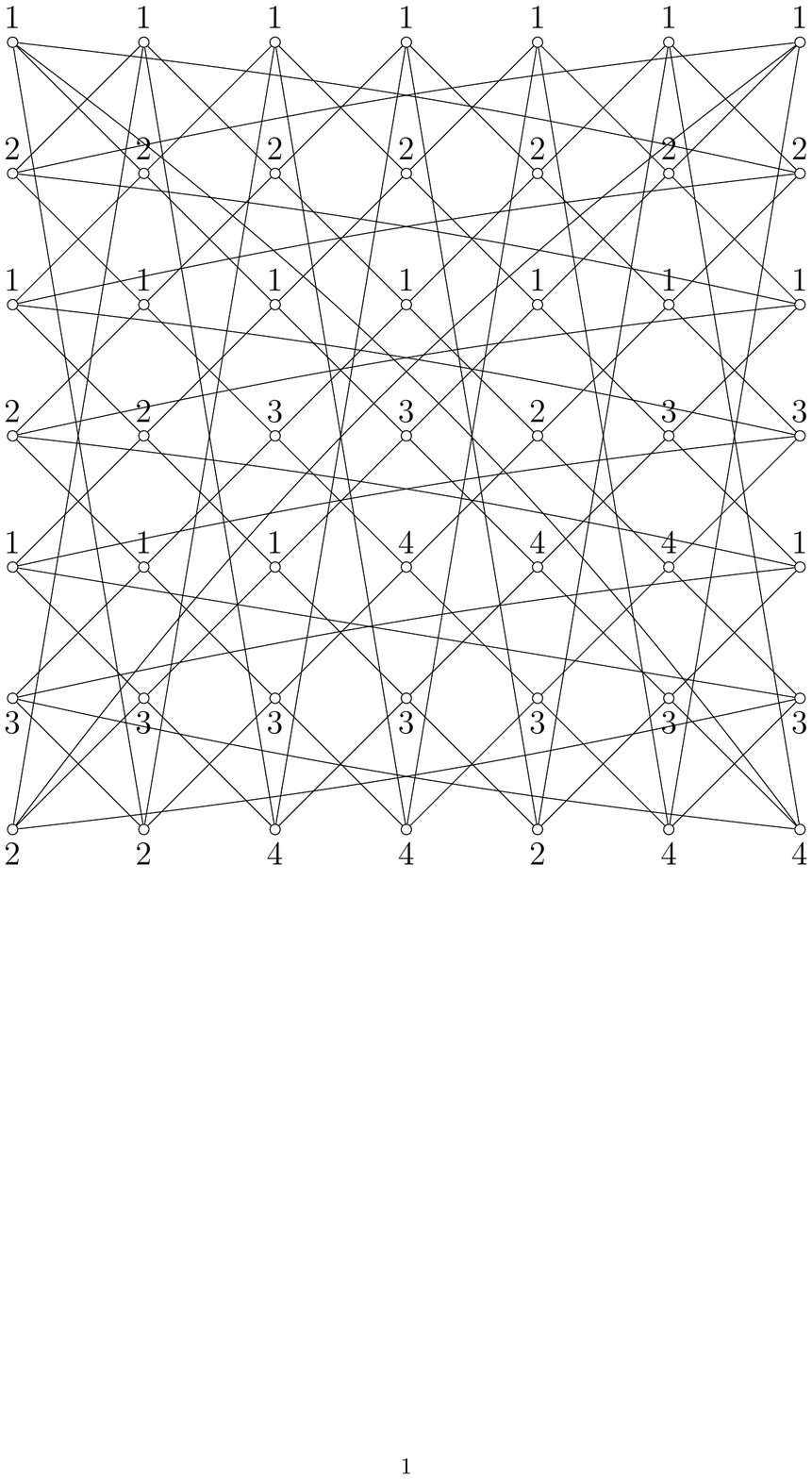}
    \caption{~A $4$-lid-coloring of $C_7 \times C_7$}
    \label{77}
	
\end{figure}
\end{document}